\newtheorem{theorem}{Theorem}[section]
\newtheorem{lemma}[theorem]{Lemma}
\newtheorem{corollary}[theorem]{Corollary}
\newtheorem{proposition}[theorem]{Proposition}
\theoremstyle{definition}
\newtheorem{definition}[theorem]{Definition}
\newtheorem{example}[theorem]{Example}
\newtheorem{claim}{Claim}
\theoremstyle{remark}
\newtheorem{remark}[theorem]{Remark}
\numberwithin{equation}{section}
\begin{document}
\title[General Stieltjes moment problems]{General Stieltjes moment problems for rapidly decreasing smooth functions}
\author{Ricardo Estrada}
\address{R. Estrada, Department of Mathematics\\
Louisiana State University\\
Baton Rouge, LA 70803\\
U.S.A.}
\email{restrada@math.lsu.edu}
\author{Jasson Vindas}
\address{J. Vindas\\
Department of Mathematics\\
Ghent University \\
Krijgslaan 281\\
B 9000 Gent\\
Belgium}
\email{jasson.vindas@UGent.be}
\thanks{J. Vindas gratefully acknowledges support from Ghent University, through the
BOF-grant number 01N01014}

\subjclass[2010]{Primary 30E05, 47A57, 44A60; Secondary 46F05}
\keywords{Stieltjes moment problems; rapidly decreasing smooth solutions}

\begin{abstract}
We give (necessary and sufficient) conditions over a sequence $\left\{
f_{n}\right\}  _{n=0}^{\infty}$ of functions under which every generalized
Stieltjes moment problem
\[
\int_{0}^{\infty} f_{n}(x)\phi(x)\mathrm{d} x=a_{n}, \ \ \ n\in\mathbb{N},
\]
has solutions $\phi\in\mathcal{S}(\mathbb{R})$ with $\operatorname*{supp}
\phi\subseteq[0,\infty)$. Furthermore, we consider more general problems of
this kind for measure or distribution sequences $\left\{ f_{n}\right\}
_{n=0}^{\infty}$. We also study vector moment problems with values in Fr\'{e}chet spaces and multidimensional moment problems.

\end{abstract}
\maketitle

\section{Introduction\label{Sectio: Introduction}}

The problem of moments, as its generalizations, is an important mathematical
problem which has attracted much attention for more than a century. It was
first raised and solved by Stieltjes for non-negative measures
\cite{shohat-tamarkin, Stieltjes}. Boas \cite{boas1939} and P\'{o}lya
\cite{polya1938} showed later that given an arbitrary sequence $\left\{
a_{n}\right\}  _{n=0}^{\infty}$ there is always a function of bounded
variation $F$ such that
\begin{equation}
\int_{0}^{\infty}x^{n}\mathrm{d}F(x)=a_{n}\,,\ \ \ n\in\mathbb{N}\,.
\label{momentseq1.1}
\end{equation}

A major improvement to this result was achieved by Dur\'{a}n, who was able to
show the existence of regular solutions to (\ref{momentseq1.1}). He proved in
\cite{duran1989} that every Stieltjes moment problem
\begin{equation}
\int_{0}^{\infty}x^{n}\phi(x)\,\mathrm{d}x=a_{n}\,,\ \ \ n\in\mathbb{N}\,,
\label{momentseq1.2}
\end{equation}
admits a solution $\phi\in\mathcal{S}(0,\infty)$, that is, a solution in the
Schwartz class of rapidly decreasing smooth functions $\mathcal{S}
(\mathbb{R})$ with $\operatorname*{supp}\phi\subseteq\lbrack0,\infty)$. The
corresponding generalization for the strong moment problem has been given in
\cite{duran-estrada1994}. Extensions of these results to vector-valued Stieltjes moment
problems are also well-known \cite{duran1992,estrada1998,galindo-sanz2001}. Chung,
Chung, and Kim, and more recently, Lastra and Sanz have considered moment
problems with solutions in Gelfand-Shilov classes
\cite{C-C-K2003,lastra-sanz2008,lastra-sanz2009}.

The problem that we are concerned with in this article is a general Stieltjes
moment problem in which we replace the sequence of monomials $\left\{
x^{n}\right\}  _{n=0}^{\infty}$ in \eqref{momentseq1.2} by a rather general sequence of functions (or
distributions) $\left\{  f_{n}\right\}  _{n=0}^{\infty}$. We are interested in
conditions over $\left\{  f_{n}\right\}  _{n=0}^{\infty}$ that ensure the
existence of solutions $\phi\in\mathcal{S}(0,\infty)$ to every infinite
system of equations
\begin{equation}
\int_{0}^{\infty}f_{n}(x)\phi(x)\,\mathrm{d}x=a_{n}\,,\ \ \ n\in\mathbb{N}\,,
\label{momentseq1.3}
\end{equation}
for a given arbitrary sequence $\left\{  a_{n}\right\}  _{n=0}^{\infty}$. In
particular, we shall show that the following conditions on the
asymptotic behavior of the primitives of the sequence suffice for the
solvability of every (\ref{momentseq1.3}):

\begin{theorem}
\label{momentsth1} Let $\left\{  f_{n}\right\}  _{n=0}^{\infty}$ be a sequence
of locally integrable functions on $[0,\infty)$ having at most polynomial
growth at infinity. Then, every generalized moment problem \eqref{momentseq1.3} has solutions $\phi$ in $\mathcal{S}(0,\infty)$ if the sequence $\left\{
f_{n}\right\}  _{n=0}^{\infty}$ fulfills:

\begin{itemize}
\item[$(i)$] $\int_{0}^{x} f_{n}(t)\mathrm{d}t=o\left(\int_{0}^{x}f_{n+1}
(t)\mathrm{d}t\right)$ as $x\to\infty$, $n\in\mathbb{N}$.

\item[$(ii)$] For every $\alpha>0$ there are $N=N_{\alpha}$ and $\sigma
=\sigma_{\alpha}>1$ such that
\begin{equation}
\inf_{a\in\lbrack1,\sigma]}\left\vert \int_{0}^{ax}f_{N}(t)\mathrm{d}
t\right\vert =\Omega(x^{\alpha})\,\ \ \ \mbox{as }x\rightarrow\infty\:.
\label{eqth1}%
\end{equation}

\end{itemize}
\end{theorem}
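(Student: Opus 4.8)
The plan is to recognize the statement as a surjectivity assertion for the moment map and to settle it by means of \emph{Eidelheit's theorem}. Write $E=\mathcal{S}(0,\infty)$, a Fr\'echet space with the fundamental increasing system of seminorms $p_{k}(\phi)=\sup_{j\le k}\sup_{x\ge0}(1+x)^{k}|\phi^{(j)}(x)|$. Each $f_{n}$ is locally integrable of at most polynomial growth, hence defines $f_{n}\in E'$ via $\langle f_{n},\phi\rangle=\int_{0}^{\infty}f_{n}\phi$. Solving every problem \eqref{momentseq1.3} for arbitrary $\{a_{n}\}$ means precisely that the moment map $T\colon E\to\omega:=\mathbb{C}^{\mathbb{N}}$, $T\phi=(\langle f_{n},\phi\rangle)_{n}$, is onto. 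By Eidelheit's theorem it then suffices to verify two things: that $\{f_{n}\}$ is linearly independent in $E'$, and that for each $k$ only finitely many linearly independent elements of $\mathrm{span}\{f_{n}\}$ are continuous with respect to $p_{k}$ (equivalently, $\mathrm{span}\{f_{n}\}\cap U_{k}^{\circ}$ is finite-dimensional, where $U_{k}$ is the $p_{k}$-unit ball and $U_{k}^{\circ}$ its polar).

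Both conditions will be read off the primitives $F_{n}(x)=\int_{0}^{x}f_{n}$. Since every $\phi\in E$ vanishes to infinite order at $0$ and decays faster than any power, while $F_{n}$ grows at most polynomially, integration by parts gives
\[
\langle f_{n},\phi\rangle=-\int_{0}^{\infty}F_{n}(x)\,\phi'(x)\,\mathrm{d}x,
\]
with no boundary contributions, and likewise for a finite combination $f=\sum_{n\le N}c_{n}f_{n}$ with primitive $F=\sum_{n\le N}c_{n}F_{n}$. For linear independence, if $f=0$ then $F\equiv F(0)=0$; but by $(i)$ the top term dominates, $\sum_{n<N}c_{n}F_{n}=o(F_{N})$, so $c_{N}F_{N}=o(F_{N})$ along the scales where $F_{N}$ is unbounded by $(ii)$, forcing $c_{N}=0$ and, inductively, independence.

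For the finite-dimensionality condition I fix $k$ and take a nonzero $f=\sum_{n\le N}c_{n}f_{n}$ with $c_{N}\ne0$ that is $p_{k}$-continuous. The idea is to probe $F$ by test functions concentrated at a geometric scale supplied by $(ii)$. With $\sigma=\sigma_{\alpha}>1$ and a fixed profile $\rho\in C_{c}^{\infty}(1,\sigma)$, set $\phi_{x}(t)=\rho(t/x)$; then $p_{k}(\phi_{x})\le C_{\rho}\,x^{k}$, whereas
\[
\langle f,\phi_{x}\rangle=-\int_{1}^{\sigma}F(xs)\,\rho'(s)\,\mathrm{d}s.
\]
Here $(ii)$ is decisive: for the index $N=N_{\alpha}$ it produces, $F_{N}$ is continuous with $|F_{N}(xs)|\ge c\,x^{\alpha}$ for all $s\in[1,\sigma]$ along suitable $x\to\infty$, so by the intermediate value theorem $F_{N}$ keeps a \emph{constant sign} across the whole block $[x,\sigma x]$, and by $(i)$ the combination $F$ inherits this behaviour up to the factor $c_{N}$. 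Exploiting the constant sign together with the genuine growth of $F_{N}$ across scales, one extracts $|\langle f,\phi_{x}\rangle|\ge c'\,x^{\alpha}$ whenever $N\ge N_{\alpha}$, so $p_{k}$-continuity forces $x^{\alpha}\le C\,x^{k}$. Taking $\alpha=k+1$ is impossible once $N\ge N_{k+1}$, whence every $p_{k}$-continuous element of $\mathrm{span}\{f_{n}\}$ lies in $\mathrm{span}\{f_{0},\dots,f_{N_{k+1}-1}\}$, a finite-dimensional space. Eidelheit's theorem then yields the surjectivity of $T$, and with it the theorem.

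The main obstacle is exactly the lower bound $|\langle f,\phi_{x}\rangle|\gtrsim x^{\alpha}$. Because $\int_{1}^{\sigma}\rho'=0$, a primitive that were (nearly) constant on the probed block would be \emph{invisible} to $\phi_{x}$, so the estimate cannot rest on the pointwise size of $F$ alone; it must draw on the growth of $F_{N}$ between scales and on the absence of cancellation. This is precisely why $(ii)$ is formulated with the \emph{uniform} infimum over the entire interval $a\in[1,\sigma]$ rather than a pointwise bound: it pins down a constant sign and a robust size of $F_{N}$ over a whole geometric block, on which a single rescaled profile (or a short concatenation of such blocks, to register the increase of $F_{N}$) captures a pairing of the full order $x^{\alpha}$. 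Carrying out this extraction uniformly — controlling the domination in $(i)$ and the tapering and tail contributions so that no power of $x$ is lost — is the one genuinely delicate step; the Eidelheit reduction and the integration by parts surrounding it are routine.
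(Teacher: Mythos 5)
Your functional--analytic frame is sound and is essentially the paper's: the Eidelheit condition (linear independence plus finite-dimensionality of the $p_k$-continuous part of $\mathrm{span}\{f_n\}$) is exactly what the paper verifies, via the Tr\`eves surjectivity criterion and Silva's lemma, in the proof of its Theorem \ref{momentsth2}. The gap is in the one step you yourself flag as delicate, and it is a genuine gap, not a technicality: the lower bound $|\langle f,\phi_x\rangle|\geq c'x^{\alpha}$ does not follow from $(i)$ and $(ii)$ by the route you sketch. After integration by parts you must bound $\bigl|\int_{1}^{\sigma}F(xs)\rho'(s)\,\mathrm{d}s\bigr|$ from below, with $\int_{1}^{\sigma}\rho'=0$. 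Condition $(ii)$ gives, along a sequence $x\to\infty$, only a pointwise lower bound $|F_{N}(xs)|\geq cx^{\alpha}$ on the block $s\in[1,\sigma]$; it says nothing about the increment or oscillation of $F_{N}$ across that block. If $F$ happens to be (nearly) constant on $[x,\sigma x]$ --- which is fully compatible with $(i)$, $(ii)$, constant sign, and the size $cx^{\alpha}$ --- the pairing with $\rho'$ is $o(x^{\alpha})$ or even zero. ``Constant sign plus genuine growth across scales'' is not available as a hypothesis, and the parenthetical ``concatenation of blocks'' cannot manufacture it, since $(ii)$ asserts no growth of $F_N$ \emph{within or between} consecutive blocks, only recurrent largeness.

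The paper's proof of the corresponding implication (Theorem \ref{momentsth3}, of which Theorem \ref{momentsth1} is the case $m=1$) avoids this by running the argument contrapositively and never pairing $F$ against a mean-zero profile. From the failure of the Ces\`aro condition one gets $\int_{0}^{\infty}F(\lambda x)\phi(x)\,\mathrm{d}x=o(\lambda^{\alpha})$ for every $\phi\in\mathcal{S}(0,\infty)$; choosing $\phi\geq0$ supported in $[1,\sigma]$ with $\int\phi=1$ and applying the first mean value theorem converts the integral into a point value $F(\lambda a_{\lambda})$ at some unknown $a_{\lambda}\in(1,\sigma)$, hence $F(\lambda a_{\lambda})=o(\lambda^{\alpha})$. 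This is what the uniform infimum over $a\in[1,\sigma]$ in \eqref{eqth1} is really for: not to fix a sign, but to defeat the unknown location of the mean-value point. Your proof would be repaired by replacing the bump-derivative probe with this mean-value argument (or by some other device that genuinely exploits positivity of the weight); as written, the central estimate is asserted but cannot be established from the stated hypotheses.
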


Theorem \ref{momentsth1} is a general version of Dur\'{a}n's theorem quoted
above. It also covers the case $f_{n}(x)=x^{\alpha_{n}}$ or generalized moment
problems such as
\begin{equation}
\int_{0}^{\infty}x^{\alpha_{n}}\sin\left(  \frac{1}{x^{\beta}}\right)
\phi(x)\mathrm{d}x=a_{n}\,,\ \ \ n\in\mathbb{N}\ \ \ (\beta>0)\:,
\label{momentseq1.6}
\end{equation}
where $-1<\Re e$\thinspace$\alpha_{0}<\Re e$\thinspace$\alpha_{1}<\dots<\Re
e$\thinspace$\alpha_{n}\rightarrow\infty$. The $\Omega$ in (\ref{eqth1})
stands for the Hardy-Littlewood symbol, namely, the negation of Landau's
little $o$ symbol: $f(x)=\Omega(g(x))$ (as $x\rightarrow\infty$) means that
there is a constant $C>0$ such that the inequality $|f(x)|\geq C|g(x)|$ holds
infinitely often for arbitrarily large values of $x$.

The assumptions on the sequence $\{f_{n}\}^{\infty}_{n=0}$ from Theorem
\ref{momentsth1} can be greatly relaxed. As we show, there is a corresponding
result that applies for sequences of functions that might not be even locally Lebesgue integrable.
In fact, we study in Section \ref{Moments for distribution sequence} the
general moment problem for distribution sequences $\left\{  f_{n}\right\}
_{n=0}^{\infty}$. We provide in Theorem \ref{momentsth2} a complete characterization of those distribution sequences for which all moment problems $a_{n}=\langle f_{n},\phi\rangle$, $n\in\mathbb{N}$, have solutions $\phi\in\mathcal{S}(0,\infty)$. The
notion of Ces\`{a}ro admissibility, introduced in Section
\ref{Moments for distribution sequence}, plays a key role in our criterion for
the existence of solutions in $\mathcal{S}(0,\infty)$ to generalized moment
problems. In Section \ref{function sequences}, we specialize our results to the case of
function sequences.

We consider in Section \ref{Section: measure} measure weighted moment problems of the form
\begin{equation}
\label{momentseq1.5}
\int_{0}^{\infty} x^{\alpha_{n}}\phi(x)\mathrm{d}F(x)=a_{n}\:, \quad n\in\mathbb{N}\:, 
\end{equation}
for a (fixed) non-negative measure $\mathrm{d}F$ and a sequence of real exponents $\{\alpha_{n}\}_{n=0}^{\infty}$. Theorem \ref{meas thm 1} below gives necessary and sufficient conditions for the solvability of every \eqref{momentseq1.5} in $\mathcal{S}(0,\infty)$. Interestingly, our general considerations apply to show the existence of solutions to moment
problems that could arise in quite different terms. For example, if $\left\{
\alpha_{n}\right\}  _{n=0}^{\infty}$ is an increasing sequence of real numbers tending to $\infty$, then,
as follows from our results, every discrete moment problem
\[
\sum_{p \text{ prime}}p^{\alpha_{n}}\phi(p)=a_{n}\,,\ \ \ n\in\mathbb{N}\,,
\]
admits solutions $\phi\in\mathcal{S}(0,\infty)$. 

Section \ref{Section: Vector moment problems} is devoted to vector moment problems with values in a Fr\'{e}chet space. Our analysis of the vector moment problem is based upon some results on the density of the set of solutions to moment problems, which will be obtained in Section \ref{Section: Density of the set of solutions of moment problems}. We conclude the article by studying moment problems in several variables in Section \ref{Section: Moment problems in several variables}.

\section{Preliminaries\label{Section: Preliminaries}}

We use the standard notation from distribution theory
\cite{campos,estrada-kanwal2002,vladimirovbook}. The space $\mathcal{S}_{+}^{\prime}$
denotes \cite{vladimirovbook} the space of all tempered distributions with
supports in the interval $[0,\infty)$. It can be canonically identified with
the dual space of $\mathcal{S}_{+}$, where
\[
\mathcal{S}_{+}=\left\{  \psi\in C^{\infty}[0,\infty):\,\,\psi=\varphi
_{|_{[0,\infty)}},\mbox{ for some }\varphi\in\mathcal{S}(\mathbb{R})\right\}
\,.
\]
Notice that $\mathcal{S}(0,\infty)$ is a closed subspace of $\mathcal{S}_{+}$,
where, as in the Introduction, $\mathcal{S}(0,\infty)$ consists of those
$\psi\in\mathcal{S}_{+}$ such that $\psi^{(m)}(0)=0$ for every
$m\in\mathbb{N}$.

We denote as $\mathcal{N}_{0}$ the annihilator of $\mathcal{S}(0,\infty)$ in
$\mathcal{S}^{\prime}_{+}$. It is then clear that $\mathcal{N}_{0}$ consists
of \emph{delta sums} at the origin, that is, finite linear combinations of the
Dirac delta $\delta$ and its derivatives.

We shall employ the notion of Ces\`{a}ro behavior of distributions, introduced
in \cite{estradaCesaro} (see also
\cite{estrada-kanwal2002,estrada-vindasTauber2nd,P-S-V}). We start with
primitives of distributions \cite{vladimirovbook}. Given $f\in\mathcal{S}
_{+}^{\prime}$ and $m\in\mathbb{N}$, we denote as $f^{(-m)}$ the $m$-primitive
of $f$ that satisfies $f^{(-m)}\in\mathcal{S}_{+}^{\prime}$. It can be
expressed as the convolution \cite{vladimirovbook}
\[
f^{(-m)}=f\ast\frac{x_{+}^{m-1}}{(m-1)!}\,.
\]
The Ces\`{a}ro order growth symbols for distributions are defined as follows.
Let $\alpha\in\mathbb{R}\setminus\{-1,-2,\dots\}$ and $m\in\mathbb{N}$, for
$f\in\mathcal{S}_{+}^{\prime}$, we write
\[
f(x)=O(x^{\alpha})\ \ \ (\mathrm{C},m)\,,\ \ \ x\rightarrow\infty\,,
\]
if $f^{(-m)}$ is a regular distribution (locally Lebesgue integrable) for
large arguments and there is a polynomial $P$ of degree at most $m-1$ such
that
\begin{equation}
f^{(-m)}(x)=P(x)+O(x^{\alpha+m})\ \ \ \mbox{as }x\rightarrow\infty\,.
\label{momentseq6}
\end{equation}
Observe that if $\alpha>-1$ the polynomial in (\ref{momentseq6}) is then
irrelevant. The little $o$ symbol is defined in a similar fashion. When $f$ is
locally Lebesgue integrable, the relation (\ref{momentseq6}) reads as
\[
\frac{1}{x}\int_{0}^{x}f(t)\left(  1-\frac{t}{x}\right)  ^{m-1}\,\mathrm{d}
t=\frac{Q(x)}{x^{m}}+O(x^{\alpha})\,,
\]
for some polynomial $Q$ of degree at most $m-1$. We also introduce the
Hardy-Littlewood symbol $\Omega(x^{\alpha})$ in the Ces\`{a}ro sense. Thus, we
define
\[
f(x)=\Omega(x^{\alpha})\ \ \ (\mathrm{C},m)\,,\ \ \ x\rightarrow\infty\,,
\]
as the negation of $f(x)=o(x^{\alpha})$ $\ (\mathrm{C},m)$, $x\rightarrow
\infty$; in particular, if $\alpha>-1$ and $f^{(-m)}(x)$ is a function for
large $x$, it just means that $f^{(-m)}(x)=\Omega\left(  x^{\alpha+m}\right)
$. Analogous definitions apply as $x\rightarrow0^{+}$.

If we do not have to emphasize the role of $m$ in a Ces\`{a}ro order relation
we simply write $(\mathrm{C})$, which stands for $(\mathrm{C},m)$ for some
$m$. The growth order symbols can be used to define asymptotic relations and
distributional evaluations in the Ces\`{a}ro sense, see
\cite{estrada-kanwal2002} for details. 

\section{General moment problems for distribution
sequences\label{Moments for distribution sequence}}

We study in this section the following general moment problem. Let $\left\{
f_{n}\right\}  _{n=0}^{\infty}\subset\mathcal{S}_{+}^{\prime}$ be a sequence
of distributions. We seek conditions over $\left\{  f_{n}\right\}
_{n=0}^{\infty}$ such that every generalized moment problem, for a given
arbitrary sequence $\left\{  a_{n}\right\}  _{n=0}^{\infty}$,

\begin{equation}
\left\langle f_{n},\phi\right\rangle =a_{n}\,,\ \ \ n\in\mathbb{N}\,,
\label{momentseq7}
\end{equation}
admits a solution $\phi\in\mathcal{S}(0,\infty)$.

We start with a natural condition over $\left\{
f_{n}\right\}  _{n=0}^{\infty}$. First, notice that if (\ref{momentseq7}) is solvable for arbitrary sequences we must necessarily have
\begin{equation}
\tag{\textbf{P1}'}
\label{P1'}
\mbox{the distibutions }f_{0},f_{1},f_{2}
,\dots,f_{n},\dots\mbox{ are linearly independent.}
\end{equation}
Since we are interested in solutions to (\ref{momentseq7}) in $\mathcal{S}
(0,\infty)$, one should assume that none of the $f_{n}$ is a delta sum at the
origin; therefore, we assume 
the following relation between the sequence and the annihilator of
$\mathcal{S}(0,\infty)$:
\begin{equation}
\tag{\textbf{P1}}
\label{P1} \mbox{No element of }\mathcal{N}_{0}\mbox{ is a
linear combination of }f_{0},f_{1},f_{2},\dots,f_{n},\dots\,.
\end{equation}
Naturally \eqref{P1} implies \eqref{P1'} since one
considers the zero distribution is a delta sum with zero coefficients.

A key property in our criterion for the existence of solutions to
(\ref{momentseq7}) is the notion of Ces\`{a}ro admissibility, defined as follows.
We shall say that the sequence $\left\{  f_{n}\right\}  _{n=0}^{\infty}$ is
\emph{Ces\`{a}ro admissible} \emph{or that it satisfies the property} \eqref{P2} if: There is an increasing sequence of integers $\left\{
m_{j}\right\}  _{j=0}^{\infty}$ such that for every $j\in\mathbb{N}$ and every
$\alpha>0$ ($\alpha\notin\mathbb{Z}$) there exists $\nu=\nu_{j,\alpha}
\in\mathbb{N}$ such that if $N\geq\nu$
\begin{equation}
 \tag{\textbf{P2}}
 \label{P2}
\begin{drcases} 
&
\sum_{n=0}^{N}b_{n}f_{n}(x)=O(x^{\alpha})\ \ (\mathrm{C},m_{j}
)\:,\ \ x\rightarrow\infty\, ,
\\
&
\left( \sum_{n=0}^{N}b_{n}{f_{n}^{(-m_{j})}}\right)_{|(0,\infty)} \in C(0,\infty)\:, \mbox{ and}
  \\
  &
  \sum_{n=0}^{N}b_{n}f_{n}(x)=O(x^{-\alpha})\ \ (\mathrm{C},m_{j}
)\:,\ \ x\rightarrow0^{+}
\end{drcases}
  \Longrightarrow b_{\nu}=b_{\nu+1}=\dots=b_{N}=0\,.
\end{equation}
Note that \eqref{P2} is equivalent to: For each $m\in\mathbb{N}$ and $\alpha\in\mathbb{R}
_{+}\setminus{\mathbb{N}}$ there exits $\nu=\nu_{\alpha,m}\in\mathbb{N}$ such
that if $N\geq\nu$ and $b_{N}\neq0$, then 
$$
\sum_{n=0}^{N}b_{n}f_{n}(x)=\Omega(x^{\alpha}) \quad \ (\mathrm{C},m)\:, \quad x\rightarrow\infty\:, \quad \mbox{or} 
$$
$$
\left(\sum_{n=0}^{N}b_{n}f_{n}^{(-m)}\right)_{|(0,\infty)}\notin
C(0,\infty)\:, \quad \mbox{or}
$$

$$
\sum_{n=0}^{N}b_{n}f_{n}(x)=\Omega(x^{-\alpha}) \quad \ (\mathrm{C},m)\:, \quad x\rightarrow0^{+}\: ,
$$
holds. For example, the sequence of monomials $f_{n}(x)=x^{n}$ is Ces\`{a}ro
admissible, but so are $f_{n}(x)=\operatorname*{Pf}(x^{-n})$ and $f_{n}(x)=\delta^{(n)}(x-1)$, $n\in\mathbb{N}$.

The ensuing theorem is the main result of this section.

\begin{theorem}
\label{momentsth2} Every generalized moment problem $(\ref{momentseq7})$ has a
solution $\phi\in\mathcal{S}(0,\infty)$ if and only if the distribution
sequence $\left\{  f_{n}\right\}  _{n=0}^{\infty}$ satisfies the properties \eqref{P1} and \eqref{P2}.
\end{theorem}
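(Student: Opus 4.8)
The plan is to recast the solvability of \eqref{momentseq7} for every sequence $\{a_{n}\}$ as the surjectivity of the continuous linear moment operator $T\colon \mathcal{S}(0,\infty)\to\omega$, $T\phi=(\langle f_{n},\phi\rangle)_{n=0}^{\infty}$, where $\omega=\mathbb{K}^{\mathbb{N}}$ is the Fr\'echet space of all scalar sequences with its product topology. Since $\mathcal{S}(0,\infty)$ is Fr\'echet and the functionals $y_{n}:=f_{n}|_{\mathcal{S}(0,\infty)}$ lie in its dual, I would invoke the classical Eidelheit surjectivity criterion for maps into $\omega$: the operator $T$ is onto if and only if (1) the $\{y_{n}\}$ are linearly independent, and (2) for every continuity seminorm $\rho$ of $\mathcal{S}(0,\infty)$ the space $\operatorname{span}\{y_{n}\}\cap E_{\rho}'$ is finite dimensional, where $E_{\rho}'$ denotes the functionals bounded with respect to $\rho$. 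The whole proof then reduces to translating (1) into \eqref{P1} and (2) into \eqref{P2}.

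The first translation is immediate: the restriction map $\mathcal{S}_{+}'\to\mathcal{S}(0,\infty)'$ has kernel exactly $\mathcal{N}_{0}$, so a finite combination $\sum b_{n}y_{n}$ vanishes in $\mathcal{S}(0,\infty)'$ precisely when $\sum b_{n}f_{n}\in\mathcal{N}_{0}$. Hence linear independence of $\{y_{n}\}$ is verbatim the statement that no nontrivial combination of the $f_{n}$ lies in $\mathcal{N}_{0}$, which is \eqref{P1}.

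The substance of the argument is a representation lemma identifying $E_{\rho}'$ in Cesàro terms. I would fix a fundamental family of seminorms for $\mathcal{S}(0,\infty)$ indexed by a number of derivatives $m$ and a growth exponent, and show that a finite combination $g=\sum_{n=0}^{N}b_{n}f_{n}$ is bounded with respect to the corresponding seminorm if and only if the three conditions in \eqref{P2} hold for some pair $(m_{j},\alpha)$: namely $g=O(x^{\alpha})\,(\mathrm{C},m_{j})$ at $\infty$, $\big(\sum b_{n}f_{n}^{(-m_{j})}\big)|_{(0,\infty)}\in C(0,\infty)$, and $g=O(x^{-\alpha})\,(\mathrm{C},m_{j})$ at $0^{+}$. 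The easy direction rests on integration by parts: for $\phi\in\mathcal{S}(0,\infty)$ one has $\langle g,\phi\rangle=(-1)^{m_{j}}\int_{0}^{\infty}g^{(-m_{j})}(x)\phi^{(m_{j})}(x)\,\mathrm{d}x$, the polynomial ambiguity of the primitive pairing to zero because $\phi$ vanishes to infinite order at the origin and decays rapidly at infinity; the growth bound at $\infty$ controls the tail, the infinite-order flatness of $\phi$ at $0$ absorbs the admissible singularity coming from the $O(x^{-\alpha})$ bound, and the interior continuity guarantees integrability on compact subsets. This yields $g\in E_{\rho}'$.

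The hard part, and the main obstacle, is the converse: extracting the two-sided Cesàro growth and the interior continuity of the primitive from mere $\rho$-boundedness of $g$ on $\mathcal{S}(0,\infty)$. I would obtain the bounds by testing $\langle g,\phi\rangle$ against suitably rescaled bump functions concentrated at large $x$, and symmetrically near $0$, using the Cesàro-order machinery recalled in Section~\ref{Section: Preliminaries} to convert the resulting pointwise estimates on primitives into the stated $(\mathrm{C},m_{j})$ relations; the delicate point is to manage the quotient by $\mathcal{N}_{0}$ and to produce a single representative exhibiting all three behaviors at once. Granting this dictionary, condition (2) of Eidelheit over a cofinal family of seminorms becomes exactly the requirement that, for each fixed $m_{j}$ and each $\alpha>0$, the combinations satisfying the three Cesàro conditions form a finite-dimensional space, i.e.\ that there is a threshold $\nu_{j,\alpha}$ beyond which the top coefficients must vanish; cofinality is secured by taking $m_{j}\to\infty$. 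This is precisely \eqref{P2}, and combining the two translations with Eidelheit's criterion gives the asserted equivalence.
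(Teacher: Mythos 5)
Your proposal is correct in outline and reaches the theorem by a genuinely different functional-analytic route. The paper passes to the transpose $\Lambda^{\top}\colon\mathcal{P}\to\mathcal{S}_{+}'/\mathcal{N}_{0}$, applies the surjectivity criterion of \cite[Thm.\ 37.2]{treves} (injective transpose with weakly$^{*}$ closed range), and then needs two further tools: Silva's lemma to reduce closedness of the range to finite dimensionality of $\pi(\mathcal{M})\cap\mathcal{S}_{0,n}'$ for each $n$ (sufficiency), and Pt\'ak's open mapping theory to run the converse. Your invocation of Eidelheit's interpolation theorem collapses all of this into one classical statement: surjectivity onto $\omega$ is equivalent to linear independence of the $y_{n}$ plus finite dimensionality of $\operatorname{span}\{y_{n}\}\cap E_{\rho}'$ for each seminorm $\rho$ in a fundamental system --- which is exactly the intermediate condition the paper extracts by hand --- so both implications of the theorem come at once and the Pt\'ak argument is avoided entirely. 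What is identical in substance is the analytic dictionary between local boundedness and the three Ces\`aro conditions, with the loss of parameters in each direction absorbed by cofinality in $m$ and $\alpha$, as you note. The only place your plan is materially thinner than the paper is the ``hard'' direction of that dictionary: instead of rescaled bumps plus the structural theorems for Ces\`aro bounds, the paper gets it in one line by extending a $\rho$-bounded functional to an element $g\in\mathcal{S}_{+,p}'$ (Hahn--Banach; the ambiguity is a delta sum at the origin, which only perturbs primitives by polynomials on $(0,\infty)$) and writing $g^{(-p-2)}(x)=\frac{1}{(p+1)!}\langle g(u),(x-u)_{+}^{p+1}\rangle$ with $(x-u)_{+}^{p+1}\in\mathcal{S}_{+,p}$ of norm $O(x^{2p+1})$, which simultaneously yields the interior continuity of the primitive and both $O$-bounds with the explicit choice $\alpha=2p+3/2$. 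Adopting that computation would close the one step you leave as a sketch; otherwise your route is sound.
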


In the proof of Theorem \ref{momentsth2}, we shall employ a result of Silva.

\begin{lemma}
[Silva \cite{silva1955}]\label{momentsl1} Let $E$ be a Silva space, namely,
$E$ is the inductive limit of an increasing sequence of Banach spaces
$\left\{  E_{n}\right\}  _{n=0}^{\infty}$ where each inclusion mapping
$E_{n}\to E_{n+1}$ is compact. Then, a linear subspace $X\subset E$ is closed
if and only if $X\cap E_{n}$ is closed in each $E_{n}$.
\end{lemma}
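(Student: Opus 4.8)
The \emph{only if} direction is immediate. By the universal property of the locally convex inductive limit each inclusion $\iota_n\colon E_n\to E$ is continuous, so if $X$ is closed in $E$ then $X\cap E_n=\iota_n^{-1}(X)$ is closed in $E_n$ for every $n$. I would therefore concentrate on the converse, assuming from now on that each $X\cap E_n$ is closed in $E_n$.

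For the \emph{if} direction my plan is to exploit the structure of a Silva space rather than to attempt a direct construction. Recall that a compact (injective) inductive limit $E=\varinjlim E_n$ is \emph{regular} (every bounded subset of $E$ lies in, and is bounded in, some $E_n$) and is a \emph{Montel} space (every bounded set is relatively compact), the latter precisely because each inclusion $E_n\hookrightarrow E_{n+1}$ is compact; moreover its strong dual $E'_\beta$ is a Fr\'{e}chet space, and by reflexivity $E=(E'_\beta)'_\beta$ is the strong dual of that Fr\'{e}chet space. The first key step is a reduction: since $X$ is convex, the Krein--\v{S}mulian theorem for the dual of a Fr\'{e}chet space, combined with the Montel property (on bounded sets the weak and the original topologies coincide), shows that $X$ is closed in $E$ if and only if $X\cap K$ is closed in $E$ for every compact $K\subseteq E$. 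Thus it suffices to fix a compact set $K\subseteq E$ and prove that $X\cap K$ is closed.

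The second step localizes $K$ to a single step. By regularity $K$ is bounded, hence contained and bounded in some $E_n$; since $E_n\hookrightarrow E_{n+1}$ is compact, $K$ is relatively compact in $E_{n+1}$, and a short argument (a continuous bijection from a compact space onto a Hausdorff space is a homeomorphism, applied to $\overline{K}$ carrying the $E_{n+1}$- and the $E$-topology) shows that $K$ is in fact a compact subset of $E_m:=E_{n+1}$. Now the hypothesis enters trivially: $X\cap K=(X\cap E_m)\cap K$ is the intersection of the $E_m$-closed set $X\cap E_m$ with the $E_m$-compact set $K$, hence it is $E_m$-compact, hence compact and therefore closed in $E$. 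This establishes that $X\cap K$ is closed for every compact $K$, and the reduction then yields that $X$ is closed.

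I expect the main obstacle to be the two structural inputs — regularity of the compact inductive limit and the fact that a compact subset of $E$ is already a compact subset of some step $E_m$ — together with the Krein--\v{S}mulian reduction; these are exactly the points at which compactness of the linking maps is used, while the rest is formal. It is worth recording why a more naive route fails. One might try to separate a point $x_0\notin X$ by building a functional $\xi\in E'=\varprojlim E_n'$ level by level, extending $\xi_n\in(X\cap E_n)^{\perp}$ to some $\xi_{n+1}\in(X\cap E_{n+1})^{\perp}$ restricting to $\xi_n$. This amounts to requiring that $\xi_n$ lie in the range of the adjoint of the compact map $E_n\to E_{n+1}/(X\cap E_{n+1})$; but a compact operator with infinite-dimensional range does not have closed range, so $(\ker)^{\perp}$ strictly exceeds the range of the adjoint and such a step-by-step extension need not exist. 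This is precisely why I would route the argument through the global compact-set criterion instead.
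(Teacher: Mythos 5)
The paper offers no proof of this lemma: it is quoted from Silva's 1955 article and used as a black box, so there is nothing internal to compare your argument against. Judged on its own, your proof is correct and is essentially the classical argument for (DFS)-spaces. The ``only if'' direction is as immediate as you say. For the converse, your two structural inputs are both standard and correctly deployed: first, the Krein--\v{S}mulian (Banach--Dieudonn\'{e}) reduction, which applies because $E$ is the strong dual of the Fr\'{e}chet--Schwartz space $E'_{\beta}$, is reflexive and Montel, so that for the convex set $X$ strong, weak and weak$^{*}$ closedness agree, the polars $U^{\circ}$ are (up to inclusion) exactly the compact sets, and on each $U^{\circ}$ the weak$^{*}$ and strong topologies coincide; second, the localization of a compact set $K$ into a single step $E_{m}$, which you obtain correctly from regularity of the compact inductive limit, compactness of the linking map, and the compact-onto-Hausdorff homeomorphism argument. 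Once $K$ is compact in $E_{m}$, the hypothesis that $X\cap E_{m}$ is closed there makes $X\cap K$ compact, hence closed in $E$, and the reduction finishes the proof. The only caveat worth recording is that both inputs (regularity of Silva limits and Banach--Dieudonn\'{e} for Fr\'{e}chet spaces) are themselves nontrivial theorems; you are entitled to quote them, but a fully self-contained proof would have to supply them. Your closing observation about why a step-by-step dual extension must fail --- the adjoint of a compact operator with infinite-dimensional range does not have closed range --- is accurate and explains well why the argument has to pass through compact sets globally.
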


We point out that the class of Silva spaces is precisely that of (DFS)-spaces
(strong duals of Fr\'{e}chet-Schwartz spaces).

\begin{proof}
[Proof of Theorem \ref{momentsth2}]Let $\mathbb{C}[[\xi]]$ be the space of formal
power series in one indeterminate with the topology of convergence in each
coefficient. Its dual is the space of polynomials in one-variable, denoted
here as $\mathcal{P}$. That every generalized moment problem (\ref{momentseq7}) has a solution $\phi\in\mathcal{S}(0,\infty)$ is equivalent to the
surjectivity of the continuous linear mapping
\[
\Lambda:\mathcal{S}(0,\infty)\rightarrow\mathbb{C}[[\xi]]\,,
\]
given by
\[
\Lambda(\psi)=\sum_{n=0}^{\infty}\mu_{n}(\psi)\xi^{n}\,,\ \ \ \mbox{where }\mu
_{n}(\psi)=\left\langle f_{n},\psi\right\rangle\,.
\]
By the well-known criterion for surjectivity of continuous linear mappings
between Fr\'{e}chet spaces \cite[Thm. 37.2, p. 382]{treves}, the mapping $\Lambda$ is surjective
if and only if its transpose
\[
\Lambda^{\top}:\mathcal{P}\rightarrow \mathcal{S}'(0,\infty)=\mathcal{S}_{+}^{\prime}/\mathcal{N}
_{0}\,,
\]
is injective and has weakly$^{\ast}$ closed range. Since $\mathcal{S}
_{+}^{\prime}/\mathcal{N}_{0}$ is Montel, it is reflexive, and there is
therefore no distinction between weakly$^{\ast}$ closedness or strong
closedness for its linear subspaces. The transpose of $\Lambda$ is easily seen
to be given by
\[
\Lambda^{\top}\left(  \sum_{n=0}^{N}b_{n}\xi^{n}\right)  =\sum_{n=0}^{N}
b_{n}f_{n}\,_{|\mathcal{S}(0,\infty)}\,.
\]
It is then obvious that $\Lambda^{\top}$ is injective if and only if \eqref{P1} holds. Write $\pi$ for the quotient mapping $\pi:\mathcal{S}_{+}^{\prime}\rightarrow\mathcal{S}_{+}^{\prime
}/\mathcal{N}_{0}$.

We first show the sufficience of \eqref{P1} and \eqref{P2} for $\Lambda^{\top}$ to have
closed range. Let $\mathcal{M}$ be the linear span of the $f_{0},f_{1}
,\dots,f_{n},\dots$ in $\mathcal{S}_{+}^{\prime}$. Furthermore, the range of
$\Lambda^{\top}$ is $\pi(\mathcal{M})$. We now use the fact that
$\mathcal{S}_{+}^{\prime}$ is a Silva space. For its defining inductive
sequence, we use the choice as in \cite{vladimirovbook}. For each
$n\in\mathbb{N}$, let $\mathcal{S}_{+,n}$ be the completion of $\mathcal{S}
_{+}$ in the norm
\[
||\psi||_{n}=\sup_{x\in\lbrack0,\infty),\,j\leq n}(1+x^{2})^{n/2}|\psi
^{(j)}(x)|\,.
\]
Clearly, $\mathcal{S}_{+,n}$ consists of those functions $\psi\in C^{n}[0,\infty)$ such that $\lim_{x\to\infty} x^{n}\psi^{(j)}(x)=0$ for $0\leq j\leq n$.
Then, the injection $\mathcal{S}_{+,n}^{\prime}\rightarrow\mathcal{S}_{+,n+1}^{\prime}$ is
compact and $\mathcal{S}_{+}^{\prime}=\bigcup_{n=0}^{\infty}\mathcal{S}
_{+,n}^{\prime}$, topologically. Moreover, denoting $\mathcal{S}_{0,n}^{\prime}
=\pi(\mathcal{S}_{+,n}^{\prime})$, we have that $\mathcal{S}^{\prime}
(0,\infty)=\bigcup_{n=0}^{\infty}\mathcal{S}_{0,n}^{\prime}$ topologically and each
$\mathcal{S}_{0,n}^{\prime}\rightarrow\mathcal{S}_{0,n+1}^{\prime}$ is also
compact. By Lemma \ref{momentsl1}, $\pi(\mathcal{M})$ is closed if and only if
$\pi(\mathcal{M})\cap\mathcal{S}_{0,n}^{\prime}$ is closed in $\mathcal{S}
_{0,n}^{\prime}$ for each $n$. Since finite dimensional subspaces are always
closed, the latter will be a consequence of the following claim:

\begin{claim}Ces\`{a}ro admissibility implies that $\pi\left(
\mathcal{M}\right)  \cap\mathcal{S}_{0,n}^{\prime}$ is finite dimensional for each
$n\in\mathbb{N}.$
\end{claim}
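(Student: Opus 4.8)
The plan is to show that each element of $\pi(\mathcal{M}) \cap \mathcal{S}_{0,n}'$ arises from a finite linear combination $\sum_{k=0}^{N} b_k f_k$ whose "length" $N$ is bounded independently of the particular element, where the bound depends only on $n$. Since $\pi(\mathcal{M})$ consists of images $\pi(\sum_{k=0}^{N} b_k f_k)$ of finite combinations, and since by \eqref{P1} the map $\Lambda^\top$ is injective (so distinct truncated coefficient vectors give distinct images), a uniform bound on $N$ forces $\pi(\mathcal{M}) \cap \mathcal{S}_{0,n}'$ to be contained in the image of a finite-dimensional space, hence finite dimensional.

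First I would fix $n$ and take an arbitrary nonzero element of $\pi(\mathcal{M}) \cap \mathcal{S}_{0,n}'$; it can be written as $\pi(f)$ with $f = \sum_{k=0}^{N} b_k f_k$ for some $N$ and with $b_N \neq 0$ (after discarding vanishing top coefficients). The membership $\pi(f) \in \mathcal{S}_{0,n}' = \pi(\mathcal{S}_{+,n}')$ means that, modulo $\mathcal{N}_0$, the distribution $f$ agrees with some $g \in \mathcal{S}_{+,n}'$. The key is to translate the regularity encoded by $g \in \mathcal{S}_{+,n}'$ into concrete Ces\`aro growth and continuity statements about $f$ itself. Because $\mathcal{S}_{+,n}'$ is the dual of the space of $C^n$ functions decaying like $x^{-n}$ together with $n$ derivatives, a distribution lying in $\mathcal{S}_{+,n}'$ must have controlled Ces\`aro growth of a definite order near both $0$ and $\infty$, and an $m$-primitive that is continuous on $(0,\infty)$ for a suitable $m$ tied to $n$. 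I would make this precise by choosing $m = m_j$ from the Ces\`aro-admissibility sequence with $m_j$ large enough relative to $n$, and selecting $\alpha \in \mathbb{R}_+ \setminus \mathbb{N}$ so that membership in $\mathcal{S}_{+,n}'$ forces simultaneously
\[
f(x) = O(x^{\alpha})\ \ (\mathrm{C},m_j), \ x \to \infty, \quad f(x) = O(x^{-\alpha})\ \ (\mathrm{C},m_j),\ x \to 0^+,
\]
together with $\big(f^{(-m_j)}\big)_{|(0,\infty)} \in C(0,\infty)$. These three conditions are exactly the hypotheses of the implication defining \eqref{P2}.

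Then I would invoke Ces\`aro admissibility directly: property \eqref{P2} supplies $\nu = \nu_{j,\alpha}$ such that whenever the above three $O$/continuity conditions hold for a combination $\sum_{k=0}^{N} b_k f_k$ with $N \geq \nu$, one must have $b_\nu = b_{\nu+1} = \dots = b_N = 0$. Applied to our $f$, this contradicts $b_N \neq 0$ unless $N < \nu$. Hence every element of $\pi(\mathcal{M}) \cap \mathcal{S}_{0,n}'$ is the $\pi$-image of a combination of $f_0, \dots, f_{\nu-1}$, so the intersection lies in $\pi(\operatorname{span}\{f_0,\dots,f_{\nu-1}\})$, a space of dimension at most $\nu$. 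This proves finite-dimensionality, with $\nu$ depending only on $n$ through the choices of $m_j$ and $\alpha$.

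The main obstacle, and the step requiring the most care, is the second paragraph: extracting the precise Ces\`aro order bounds and the continuity of the $m_j$-primitive from the mere fact that $f \in \mathcal{S}_{+,n}' + \mathcal{N}_0$. I expect this to rest on an explicit description of the seminorm $\|\cdot\|_n$ and its dual, identifying which distributional growth orders (in the $(\mathrm{C},m)$ sense) are compatible with boundedness against the test functions in $\mathcal{S}_{+,n}$. One must verify that the polynomial ambiguity allowed in the definition of the Ces\`aro symbol matches exactly the freedom introduced by the $\mathcal{N}_0$-component (delta sums at the origin), so that passing to the quotient does not destroy the growth information. Choosing $\alpha \notin \mathbb{Z}$ and $m_j$ suitably large is what makes the admissibility hypothesis applicable; getting the quantifiers in the right order — first $n$, then the induced $j$ (so $m_j$) and $\alpha$, then the resulting $\nu$ — is the delicate bookkeeping that secures the uniform bound.
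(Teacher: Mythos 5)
Your proposal follows the same route as the paper: reduce to showing that any combination $\sum_{k=0}^{N}b_{k}f_{k}$ whose class lies in $\mathcal{S}_{0,n}^{\prime}$ satisfies the three hypotheses of \eqref{P2} for a suitable $m_{j}$ and $\alpha$, then read off $N<\nu$ and conclude that the intersection sits inside the image of a fixed finite-dimensional span. The step you flag as the main obstacle is precisely where the paper does its only real work, and it goes through as you anticipate: writing the representative as $g=\sum_{n=0}^{N}(b_{n}f_{n}+c_{n}\delta^{(n)})\in\mathcal{S}_{+,p}^{\prime}$ with $p=m_{j}-2$ and pairing $g$ against $\varphi_{x}(u)=(x-u)_{+}^{p+1}\in\mathcal{S}_{+,p}$ shows that $g^{(-m_{j})}$ is continuous on $[0,\infty)$ and $O(x^{2p+1})$, which gives the Ces\`{a}ro bounds at both $0^{+}$ and $\infty$ with $\alpha=2p+3/2$, the delta-sum part contributing only polynomials of degree at most $m_{j}-1$ that the Ces\`{a}ro symbol absorbs.
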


Indeed, for each $k\in\mathbb{N}$, set
\[
\mathcal{X}_{k}=\left\{  \sum_{n=0}^{k}b_{n}f_{n}:\,b_{n}\in\mathbb{C}
\right\}  \oplus\mathcal{N}_{0}\subset\mathcal{M}\oplus\mathcal{N}_{0}\,.
\]
Let $\left\{  m_{j}\right\}  _{j=0}^{\infty}$ be the sequence from \eqref{P2}. We show
that for $p=m_{j}-2>0$, there is $k=k_{j}$ such that $(\mathcal{M}
\oplus\mathcal{N}_{0})\cap\mathcal{S}_{+,p}^{\prime}\subseteq\mathcal{X}_{k}$,
whence the claim would follow. Suppose that $g=\sum_{n=0}^{N}(b_{n}f_{n}
+c_{n}\delta^{(n)})\in\mathcal{S}_{+,p}^{\prime}$. Find $\nu$ such that \eqref{P2}
holds for $j$ and $\alpha=2p+3/2$. Observe that $\varphi_{x}(u)=(x-u)_{+}
^{p+1}\in\mathcal{S}_{+,p}$. Thus,
\[
g^{(-m_{j})}(x)=g^{(-p-2)}(x)=g\ast\frac{x_{+}^{p+1}}{(p+1)!}=\frac{1}
{(p+1)!}\left\langle g(u),\varphi_{x}(u)\right\rangle
\]
is a continuous function (on the whole $[0,\infty)$) and
\begin{align*}
|g^{(-p-2)}(x)|  &  \leq\frac{||g||_{\mathcal{S}_{+,p}^{\prime}}||\varphi
_{x}||_{p}}{(p+1)!}=O\left(  x^{p+1}\sup_{u\in\lbrack0,x),j\leq p}(|u|^{p}+1)\left(
1-u/x\right)  ^{p+1-j}\right) \\
&  =O(x^{2p+1})\,.
\end{align*}
Hence, $\sum_{n=0}^{N}b_{n}f_{n}(x)=O(x^{\alpha})$ $(\mathrm{C},m_{j})$,
$x\rightarrow\infty$, $\sum_{n=0}^{N}b_{n}f_{n}^{(-m_{j})}(x)$ is continuous
for $x\in(0,\infty)$, and $\sum_{n=0}^{N}b_{n}f_{n}(x)=O(x^{-\alpha})$
$(\mathrm{C},m_{j})$, $x\rightarrow0^{+}$. We obtain from \eqref{P2} that $b_{n}=0$
for $n\geq\nu$. Thus, $g\in\mathcal{X}_{\nu-1}$. The claim has been established.

Conversely, assume that $\Lambda^{\top}$ is injective and has weakly$^{\ast}$
closed range. As already pointed out, the range $\Lambda^{\top}(\mathcal{P})$
is thus strongly closed because $\mathcal{S}_{+}^{\prime}/\mathcal{N}_{0}$ is
reflexive (in fact a (DFS)-space). We have already noticed that \eqref{P1} must
necessarily hold. To show \eqref{P2}, we first establish that $\Lambda^{\top}$ is an
isomorphism into its image. Pt\'{a}k's theory \cite{kotheII,rr} applies to
show that $\Lambda^{\top}$\thinspace$\mathcal{P}\rightarrow\Lambda^{\top
}(\mathcal{P})$ is open if we verify that $\mathcal{P}$ is fully complete
($B$-complete in the sense of Pt\'{a}k) and that $\Lambda^{\top}(\mathcal{P})$
is barreled. It is well known \cite[p. 123]{rr} that the strong dual of a
reflexive Fr\'{e}chet space is fully complete, so $\mathcal{P}$, as a (DFS)-space, is fully complete. Now, a closed subspace of a (DFS)-space must itself be
a (DFS)-space. Since $\mathcal{S}_{+}^{\prime}/\mathcal{N}_{0}$ is a (DFS)-space,
we obtain that $\Lambda^{\top}(\mathcal{P})$ is also a (DFS)-space and hence barreled.

Suppose now that \eqref{P2} were false. Then, there are $j$ and $\alpha>0$
such that $g_{n}(x)$\thinspace$=\sum_{\nu=0}^{n}b_{\nu,n}f_{\nu}
(x)=O(x^{\alpha})$ $(\mathrm{C},j)$, $x\rightarrow\infty$, $g_{n}
(x)=O(x^{-\alpha})$ $(\mathrm{C},j)$, $x\rightarrow0^{+}$, and $g_{n}
^{(-j)}(x)\in C(0,\infty),$ with $b_{n,n}\neq0$ for infinitely many indices
$n$. If $p\geq\max\{\alpha+j+2\}$, we conclude that there is an increasing
sequence of indices $n_{0}<n_{1}<\dots<n_{k}<\dots$ such that $\{\pi(g_{n_{k}
})\}_{k=0}^{\infty}\subset\mathcal{S}_{0,p}^{\prime}$. Let $h_{n_{k}}
=\sum_{\nu=0}^{n_{k}}(b_{\nu,n_{k}}/\Vert\pi(g_{n_{k}})\Vert_{\mathcal{S}
_{0,p}^{\prime}})\pi(f_{\nu})=$\thinspace$\sum_{\nu=0}^{n_{k}}a_{\nu,n_{k}}
\pi(f_{\nu})$ with $a_{n_{k},n_{k}}\neq0$. Then $\{h_{n_{k}}\}_{k=0}^{\infty}$
is bounded in $\mathcal{S}_{0}^{\prime}$ because of the continuity of the inclusion mapping
$\mathcal{S}_{0,p}^{\prime}\rightarrow\mathcal{S}^{\prime}(0,\infty)$. We deduce
then that $\{h_{n_{k}}\}_{k=0}^{\infty}$ is a bounded set of $\Lambda
^{T}(\mathcal{P})$ and since $\Lambda^{\top}$ is open, we should have that the
set of polynomials $\{\sum_{\nu=0}^{n_{k}}a_{\nu,n_{k}}\xi^{\nu}
\}_{k=0}^{\infty}$ is bounded in $\mathcal{P}$ as well. But the latter can
only hold if there is $k_{0}$ such that $a_{\nu,n_{k}}=0$ for all indices
$n_{k}\geq n_{k_{0}}$ and $\nu\geq n_{k_{0}}$, which produces a contradiction.
This completes the proof of the theorem.
\end{proof}

Let us discuss a simple example to illustrate Theorem \ref{momentsth2}.

\begin{example}[The generalized Borel problem]
\label{momentsex1} Let $\{k_{n}\}_{n=0}^{\infty}$ be sequence of natural numbers with $k_{n}\to\infty$ and let $\{x_{n}\}_{n=0}^{\infty}$ be a sequence of positive real numbers such that all pairs $(k_{n},x_{n})$ are distinct. The distribution sequence $\{f_{n}\}_{n=0}^{\infty}$ given by $f_{n}(x)=\delta^{(k_{n})}(x-x_{n})$ satisfies \eqref{P1} and \eqref{P2}. Consequently, Theorem \ref{momentsth2} gives that 
\begin{equation}
\label{momentsexeq1}
\phi^{(k_n)}(x_{n})=a_{n}, \quad n\in\mathbb{N},
\end{equation}
is always solvable in $\mathcal{S}(0,\infty)$. If $\{x_{n}\}_{n=0}^{\infty}$ stays on a fixed compact subset of $(0,\infty)$, multiplying by a cut-off function, we can in fact find solutions to (\ref{momentsexeq1}) that belong to $\mathcal{D}(0,\infty)$. In particular, choosing $x_{n}=x_{0}$ to be constant and $k_{n}=n$ the sequence of all natural numbers, we recover the well known fact that 
\begin{equation}
\label{momentseqBorel}
\phi^{(n)}(x_{0})=a_{n}\,,\ \ \ n\in\mathbb{N}\,. 
\end{equation}
has solution $\phi\in \mathcal{D}(\mathbb{R})$. Performing a translation, one easily sees that one may take an arbitrary
$x_{0}\in\mathbb{R}$ in (\ref{momentseqBorel}), that is, every classical Borel problem has
a solution $\phi\in\mathcal{D}(\mathbb{R})$. 

\smallskip

\end{example}

Theorem \ref{momentsth2} can be generalized to distribution two-sided
sequences $\{f_{n}\}_{n\in\mathbb{Z}}$. In fact, consider
\begin{equation}
\left\langle f_{n},\phi\right\rangle =a_{n}\,,\ \ \ n\in\mathbb{Z}\,.
\label{momentseq3.2}
\end{equation}
If we set for $n\in\mathbb{N}$
\[
c_{n}\,=
\begin{cases}
a_{n/2} & \mbox{if }n\mbox{ is  even}\\
a_{-(n+1)/2} & \mbox{if }n\mbox{ is  odd}
\end{cases}
\ \ \ \mbox{ and }\ \ \ g_{n}\,=
\begin{cases}
f_{n/2} & \mbox{if }n\mbox{ is  even}\\
f_{-(n+1)/2} & \mbox{if }n\mbox{ is  odd}\,,
\end{cases}
\]
then the moment problem (\ref{momentseq3.2}) is equivalent to $\left\langle
g_{n},\phi\right\rangle =c_{n}$, $n\in\mathbb{N}$, while that \eqref{P1} and \eqref{P2} hold for $\{g_{n}\}_{n=0}^{\infty}$ translates into:

\begin{enumerate}

\item[(\textbf{P1}*)] no element of $\mathcal{N}_{0}$ is a linear combination of
$\dots, f_{-2},f_{-1},f_{0},f_{1},f_{2},\dots$, and

\item[(\textbf{P2}*)] there is an increasing sequence of integers $\left\{
m_{j}\right\}  _{j=0}^{\infty}$ such that for each $j\in\mathbb{N}$ and each
$\alpha>0$ there exists $\nu=\nu_{j,\alpha}\in\mathbb{N}$ such that if
$N\geq\nu$: $\sum_{n=-N}^{N}b_{n} f_{n}(x)=O(x^{\alpha})$ $\ (\mathrm{C}
,m_{j})$, $x\to\infty$, $\sum_{n=-N}^{N}b_{n} f_{n}^{(-m_{j})}(x)$ is
continuous for $x\in(0,\infty)$ and $\sum_{n=-N}^{N}b_{n} f_{n}
(x)=O(x^{-\alpha})$ $\ (\mathrm{C},m_{j})$, $x\to0^{+}$, imply $b_{n}=0$ for
every $|n|\geq\nu$.
\end{enumerate}

We thus obtain,

\begin{corollary}
\label{momentsc1} Let $\{f_{n}\}_{n\in\mathbb{Z}}\subset\mathcal{S}
_{+}^{\prime}$. Every $(\ref{momentseq3.2})$ admits a solution $\phi
\in\mathcal{S}(0,\infty)$ if and only if the conditions $(\bf{P1}$*$)$ and $(\bf{P2}$*$)$ are satisfied.
\end{corollary}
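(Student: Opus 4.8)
The plan is to reduce the two-sided problem $(\ref{momentseq3.2})$ to the one-sided situation settled by Theorem \ref{momentsth2} by means of the reindexing $n\mapsto g_n$, $n\mapsto c_n$ displayed just above the statement, which packs the bi-infinite family $\{f_n\}_{n\in\mathbb{Z}}$ into an ordinary sequence by alternating between the non-negative and the negative indices. First I would observe that, for each fixed $\phi\in\mathcal{S}(0,\infty)$, the system $(\ref{momentseq3.2})$ and the system $\langle g_n,\phi\rangle=c_n$, $n\in\mathbb{N}$, consist of exactly the same equations, merely listed in a different order; since $\{a_n\}_{n\in\mathbb{Z}}\mapsto\{c_n\}_{n\in\mathbb{N}}$ is a bijection between target sequences, \emph{every} $(\ref{momentseq3.2})$ is solvable in $\mathcal{S}(0,\infty)$ if and only if \emph{every} one-sided problem $\langle g_n,\phi\rangle=c_n$ is. By Theorem \ref{momentsth2} the latter holds precisely when $\{g_n\}_{n=0}^{\infty}$ satisfies \eqref{P1} and \eqref{P2}.

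It then remains to verify that \eqref{P1} for $\{g_n\}$ is equivalent to $(\mathbf{P1}^{*})$ and that \eqref{P2} for $\{g_n\}$ is equivalent to $(\mathbf{P2}^{*})$. The equivalence of \eqref{P1} and $(\mathbf{P1}^{*})$ is immediate: the finite linear combinations of the $g_n$ are exactly the finite linear combinations of the $f_n$, $n\in\mathbb{Z}$, because $\{g_n\}$ is just $\{f_n\}_{n\in\mathbb{Z}}$ relabeled, so the assertion that no nonzero element of $\mathcal{N}_0$ arises as such a combination is the same statement in both labelings. For \eqref{P2} I would record the explicit correspondence of partial sums coming from the even/odd splitting, namely $g_{2k}=f_k$ and $g_{2k+1}=f_{-(k+1)}$, which gives $\{g_0,\dots,g_{2M}\}=\{f_n:\,-M\le n\le M\}$ and $\{g_0,\dots,g_{2M-1}\}=\{f_n:\,-M\le n\le M-1\}$. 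Consequently a one-sided truncation $\sum_{m=0}^{N}b_m g_m$ is, after relabeling the coefficients, a two-sided truncation $\sum_{|n|\le M}b_n f_n$ with $M=\lceil N/2\rceil$ (an asymmetric range being completed to a symmetric one by inserting a single vanishing coefficient). The three Ces\`{a}ro-type growth and continuity conditions in \eqref{P2} involve only the distribution $\sum b_m g_m$ and its primitives, hence they are preserved verbatim under this relabeling, and the same fixed sequence $\{m_j\}$ may be used in both properties.

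The one point requiring genuine care is the matching of the thresholds in the tail-vanishing conclusions, which is where the two-sided character of the problem is felt. Because $\mathbb{Z}$ is packed two-to-one into $\mathbb{N}$, an index $m\ge\nu$ in the $g$-labeling corresponds to an index $n\in\mathbb{Z}$ with $|n|$ of order $\nu/2$, and conversely $|n|\ge\nu$ corresponds to $m$ of order $2\nu$. Thus, to pass from \eqref{P2} (with threshold $\nu$) to $(\mathbf{P2}^{*})$ one takes the new threshold to be $\lceil(\nu+1)/2\rceil$, while to pass from $(\mathbf{P2}^{*})$ (with threshold $\nu^{*}$) to \eqref{P2} one takes $2\nu^{*}-1$; in each case the lower bound imposed on the truncation length is compatible with the corresponding bound in the other property, so the implication goes through for all admissible truncations. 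Carrying out these elementary index conversions establishes the equivalence of \eqref{P2} and $(\mathbf{P2}^{*})$, and together with the reduction to Theorem \ref{momentsth2} this proves the corollary. I expect no real obstacle beyond this bookkeeping, since the substance of the result is entirely contained in Theorem \ref{momentsth2}.
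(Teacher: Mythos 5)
Your proposal is correct and follows the same route as the paper: the even/odd interleaving $g_{2k}=f_k$, $g_{2k+1}=f_{-(k+1)}$ reduces the two-sided problem to Theorem \ref{momentsth2}, and the conditions \eqref{P1}, \eqref{P2} for $\{g_n\}$ translate into $(\mathbf{P1}^{*})$, $(\mathbf{P2}^{*})$. The paper states this reduction without spelling out the index bookkeeping, which you supply correctly.
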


We end this section with an example.

\begin{example}
\label{momentsex2} The so called strong moment problem,
$$
a_{n}=\int_{0}^{\infty} x^{n}\phi(x)\mathrm{d}x, \quad n\in\mathbb{Z}\:,
$$
was studied and solved in \cite{duran-estrada1994} for $\phi\in\mathcal{S}(0,\infty)$. Using Corollary \ref{momentsc1}, we can strengthen the main result of \cite{duran-estrada1994} as follows. Let $\{\alpha_{n}\}_{n\in\mathbb{Z}}$ be a two-sided sequence. Then, every strong moment problem
$$
a_{n}=\int_{0}^{\infty} x^{\alpha_n}\phi(x)\mathrm{d}x, \quad n\in\mathbb{Z},
$$
is solvable in $\mathcal{S}(0,\infty)$ if and only if all elements of the sequence are distinct and $|\Re e\: \alpha_{n}|\to\infty$ as $|n|\to\infty$.
\end{example}
\begin{remark}
\label{momentsrk1} The simple reduction explained above for moment problems with two-sided sequences also applies to all results from the next sections, we will therefore omit any comment concerning two-sided sequences in the sequel. 

\end{remark}

\section{Moment problems with function sequences}\label{function sequences}

We now focus our attention on function sequences. Throughout this section $\{f_{n}\}_{n=0}^{\infty}\subset\mathcal{S}%
_{+}^{\prime}$ stands for a sequence such that each $f_{n}$ is a non-identically zero locally integrable\footnote{All results from this section are valid for locally
distributionally integrable functions in the sense of the authors
\cite{estrada-vindasGIntegral}, and in particular for sequences of locally
Denjoy-Perron-Henstock or Lebesgue integrable functions on $(0,\infty)$.} 
function with continuous primitives\footnote{If $f_{n}$ is locally Denjoy-Perron-Henstock or Lebesgue integrable, its primitive is of course continuous. The primitives of distributional integrable functions are \L ojasiewicz functions but in general they may be discontinuous \cite{estrada-vindasGIntegral}, whence this assumption.} on $(0,\infty)$. Note that we allow $f_{n}$ not being
integrable near $x=0$. The distributional evaluation $\langle f_{n}
,\varphi\rangle$ for $\phi\in\mathcal{S}(0,\infty)$ can be always written
\cite{vindas-estrada2010} as a Ces\`{a}ro integral and thus we may rewrite in
this case the moment problem (\ref{momentseq7}) as
\begin{equation}
\int_{0}^{\infty}\phi(x)f_{n}(x)\mathrm{d}x=a_{n}\ \ \ (\mathrm{C}
),\ \ \ n\in\mathbb{N}\,, \label{momentseq4.1}
\end{equation}
Naturally, if each $f_{n}$ has at most polynomial growth, then the Ces\`{a}ro
integrals in (\ref{momentseq4.1}) can be replaced by ordinary integrals.

Theorem \ref{momentsth1} gives already a complete characterization of those
$\{f_{n}\}_{n=0}^{\infty}$ for which every moment problem is solvable in
$\mathcal{S}(0,\infty)$. Note that (\ref{P1'}) for $\{f_{n}
\}_{n=0}^{\infty}$ becomes equivalent to \eqref{P1}. On the other
hand, \eqref{P2} forces the linear combinations of the primitives of the sequence to have the ensuing
growth property: For each $m\in\mathbb{N}$ and $\alpha\in\mathbb{R}
_{+}\setminus{\mathbb{N}}$ there exits $\nu=\nu_{\alpha,m}\in\mathbb{N}$ such
that if $N\geq\nu$ and $b_{N}\neq0$, then
\begin{equation}
\sum_{n=0}^{N}b_{n}f^{(-m)}_{n}(x)=\Omega(x^{\alpha})\ \ \ \mbox{as }x\rightarrow
\infty\,\ \ \mbox{or}\ \ \sum_{n=0}^{N}b_{n}f_{n}^{(-m)}(x)=\Omega(x^{-\alpha
})\ \ \ \mbox{as }x\rightarrow0^{+}\,. \label{momentseq4.2}
\end{equation}
Since all primitives of $f_{n}$ are continuous on $(0,\infty)$, the latter property is actually equivalent to condition \eqref{P2}.

The next theorem tells that if for a fixed $m$ one slightly strengthens these
two conditions, then one obtains, together with linear independence, a useful
criterion for the solvability of arbitrary moment problems (\ref{momentseq4.1}).

\begin{theorem}
\label{momentsth3} Let $m\geq1$. Suppose that $\{f_{n}\}_{n=0}^{\infty}$
satisfies \eqref{P1'} and has the following property:

\begin{enumerate}
\item[(\textbf{P3})] For any given $\alpha>0$ there is $\nu
=\nu_{\alpha}\in\mathbb{N}$ such that if $N\geq\nu$ and $b_{N}\neq0$, then one can find $\sigma=\sigma_{N}>1$ such that
\begin{align}
&  \inf_{a\in\lbrack1,\sigma]}\left\vert \sum_{n=0}^{N}b_{n}f_{n}
^{(-m)}(ax)\right\vert =\Omega(x^{\alpha})\ \ \ \mbox{as }\,x\rightarrow
\infty\,,\label{momentseq4.3}\\
&  \ \ \mbox{or}\ \ \nonumber\\
&  \inf_{a\in\lbrack1,\sigma]}\left\vert \sum_{n=0}^{N}b_{n}f_{n}
^{(-m)}(ax)\right\vert =\Omega(x^{-\alpha})\ \ \ \mbox{as }\,x\rightarrow
0^{+}\,.\nonumber
\end{align}

\end{enumerate}
Then every generalized moment problem \eqref{momentseq4.1} is solvable in
$\mathcal{S}(0,\infty)$. If additionally each $f_{n}$ is a Darboux function\footnote{Every
continuous function is of course a Darboux function. More generally, if each
$f_{n}$ is a \emph{\L ojasiewicz function} it must have the Darboux property
\cite{estrada-vindasGIntegral, lojasiewicz}.} (i.e., has the intermediate
value property), then \eqref{momentseq4.3} might be replaced by
\begin{align}
&  \inf_{a\in\lbrack1,\sigma]}\left\vert \sum_{n=0}^{N}b_{n}f_{n}
(ax)\right\vert =\Omega(x^{\alpha})\ \ \ \mbox{as }\,x\rightarrow
\infty\,,\ \label{momentseq4.4}\\
&  \ \ \ \mbox{or}\nonumber\\
&  \inf_{a\in\lbrack1,\sigma]}\left\vert \sum_{n=0}^{N}b_{n}f_{n}
(ax)\right\vert =\Omega(x^{-\alpha})\ \ \ \mbox{as }\,x\rightarrow
0^{+}\,.\nonumber
\end{align}

\end{theorem}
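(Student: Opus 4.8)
The plan is to derive Theorem \ref{momentsth3} from Theorem \ref{momentsth2} by checking that the hypotheses force the sequence to satisfy \eqref{P1} and \eqref{P2}. Since the $f_{n}$ are functions, \eqref{P1'} and \eqref{P1} coincide, as already observed, so the whole task is to extract Ces\`{a}ro admissibility from the single-level condition (P3). Throughout I write $g=\sum_{n=0}^{N}b_{n}f_{n}$, so that $g^{(-\ell)}=\sum_{n=0}^{N}b_{n}f_{n}^{(-\ell)}$, and I recall that every $g^{(-\ell)}$ with $\ell\geq 1$ is continuous on $(0,\infty)$. Taking $a=1$ in (P3) already shows that \eqref{momentseq4.3} yields $g^{(-m)}=\Omega(x^{\alpha})$, i.e.\ precisely \eqref{momentseq4.2} at the single level $m$; the real content of the interval infimum is that it propagates upward to all higher primitives.

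The heart of the argument is a finite-difference bootstrap. Fix $N\geq\nu_{\alpha}$ with $b_{N}\neq0$ and suppose first the $\Omega$-alternative at infinity in \eqref{momentseq4.3} holds: there are $x_{k}\to\infty$ with $|g^{(-m)}(y)|\geq Cx_{k}^{\alpha}$ for all $y\in[x_{k},\sigma x_{k}]$. Being continuous and nonvanishing there, $g^{(-m)}$ has constant sign on the interval. For each $k\geq1$ put $\Delta_{k}=(\sigma-1)x_{k}/k$ and use the identity $\Delta^{k}_{\Delta_{k}}\!\big[g^{(-m-k)}\big](x_{k})=\int_{[0,\Delta_{k}]^{k}}g^{(-m)}\big(x_{k}+t_{1}+\dots+t_{k}\big)\,\mathrm{d}t$, whose right-hand side is an integral of a sign-definite function bounded below by $Cx_{k}^{\alpha}$, hence $\geq Cx_{k}^{\alpha}\Delta_{k}^{k}\asymp x_{k}^{\alpha+k}$ in modulus. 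Since a $(k+1)$-term alternating sum is at most $2^{k}$ times its largest entry, some node $\xi_{k}\in[x_{k},\sigma x_{k}]$ satisfies $|g^{(-m-k)}(\xi_{k})|\gtrsim x_{k}^{\alpha+k}$; as $\xi_{k}\to\infty$ this gives $g^{(-m-k)}=\Omega(x^{\alpha+k})$. The symmetric computation at the origin, where $\Delta_{k}\to0$ and the integrand is $\geq Cx_{k}^{-\alpha}$, yields $g^{(-m-k)}=\Omega(x^{-(\alpha-k)})$ as $x\to0^{+}$ in the $0^{+}$-alternative.

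Next I assemble \eqref{P2} with the increasing sequence $m_{j}=m+j$. Given $j$ and a target exponent $\beta\in\mathbb{R}_{+}\setminus\mathbb{N}$, apply (P3) with any $\alpha>\beta+j$ and set $\nu_{j,\beta}:=\nu_{\alpha}$. If the infinity-alternative holds, the bootstrap gives $g^{(-m_{j})}=\Omega(x^{\alpha+j})$, and $\alpha+j>\beta$ forces $g^{(-m_{j})}=\Omega(x^{\beta})$; if the $0^{+}$-alternative holds it gives $g^{(-m_{j})}=\Omega(x^{-(\alpha-j)})$ with $\alpha-j>\beta$, hence $g^{(-m_{j})}=\Omega(x^{-\beta})$. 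Together with the automatic continuity of $g^{(-m_{j})}$ on $(0,\infty)$, this is exactly the contrapositive form of \eqref{P2}, so Theorem \ref{momentsth2} applies. For the Darboux addendum the same engine is run starting at level $0$: assuming \eqref{momentseq4.4} one gets $|g(y)|\geq Cx_{k}^{\alpha}$ directly on $[x_{k},\sigma x_{k}]$, and the only new point is that constant sign of $g$ can no longer be read off from continuity. Here I invoke the hypothesis: a finite linear combination of \L ojasiewicz functions is again a \L ojasiewicz function and so has the intermediate value property, whence a nonvanishing $g$ keeps constant sign on the interval. The finite-difference estimate then gives $g^{(-k)}=\Omega(x^{\pm})$ for all $k\geq1$, and the choice $m_{j}=j$ delivers \eqref{P2} as before.

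The main obstacle is the uncontrolled constants of integration: a pointwise relation $g^{(-m)}=\Omega(x^{\alpha})$ does not integrate up to a bound on $g^{(-m-1)}$, because the lower-degree polynomial ambiguity introduced by each antiderivative can swamp the estimate. This is precisely what the interval infimum in (P3) defeats, and it must be exploited through \emph{finite differences} rather than antiderivatives: the $k$-th difference annihilates the degree-$(k-1)$ ambiguity while converting the uniform interval lower bound into $\Omega$-growth exactly one primitive level at a time. The secondary point requiring care is the constant-sign step, which is supplied for free by continuity in the first part and by the \L ojasiewicz/Darboux property of the linear combination in the second.
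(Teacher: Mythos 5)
Your overall strategy coincides with the paper's: reduce to Theorem \ref{momentsth2} by observing that \eqref{P1'} and \eqref{P1} coincide for function sequences and that the whole content is the implication $(\mathbf{P3})\Rightarrow\eqref{P2}$. Where you genuinely diverge is in the mechanism for that implication. The paper argues by contradiction: if \eqref{P2} fails, a Ces\`{a}ro $o$-estimate for some $g_k^{(-j-m)}$ is converted, by integration by parts against a nonnegative test function supported in $[1,\sigma]$ and the first mean value theorem for integrals, into a pointwise estimate $g_k^{(-m)}(\lambda a_{\lambda})=o(\lambda^{\alpha})$ at some unknown $a_{\lambda}\in(1,\sigma)$, which the uniform interval lower bound of $(\mathbf{P3})$ contradicts. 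You instead verify \eqref{P2} directly by a finite-difference bootstrap, using $\Delta^{k}_{h}\bigl[g^{(-m-k)}\bigr](x)=\int_{[0,h]^{k}}g^{(-m)}(x+t_{1}+\dots+t_{k})\,\mathrm{d}t$ to push the interval lower bound up one primitive level at a time; the $k$-th difference kills the polynomial ambiguity that the paper's argument kills by integrating against a test function. Both proofs exploit the interval infimum in $(\mathbf{P3})$ for exactly the same structural reason, and your version is arguably more self-contained (it avoids the parametric-evaluation machinery and the mean value theorem for distributional integrals), at the cost of being a direct rather than contrapositive argument.

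Two caveats. First, a bookkeeping slip: to defeat $g=O(x^{\beta})$ $(\mathrm{C},m_{j})$ at infinity you need $g^{(-m_{j})}\neq O(x^{\beta+m_{j}})$, so your lower bound $\Omega(x^{\alpha+j})$ at level $m_{j}=m+j$ requires $\alpha>\beta+m$, not $\alpha>\beta+j$; since $\alpha$ may be taken arbitrarily large in $(\mathbf{P3})$, simply choose $\alpha>\beta+m+j$ to cover both endpoints. Second, and more substantively, your sign-definiteness step (continuity plus nonvanishing implies constant sign, hence the iterated integral is bounded below) is a real-variable argument: for complex coefficients $b_{n}$, which \eqref{P2} and \eqref{momentseq4.2} allow, a continuous $g^{(-m)}$ with $|g^{(-m)}|\geq Cx_{k}^{\alpha}$ on $[x_{k},\sigma x_{k}]$ can still have an iterated integral of much smaller modulus (e.g.\ a full rotation of phase across the interval). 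The paper's appeal to the first mean value theorem has the same real-valued character, so this is a shared limitation rather than a defect peculiar to your proof, but since you lean on constant sign explicitly you should either restrict to real-valued combinations or add an argument (say, splitting into real and imaginary parts and locating a subinterval on which one of them is uniformly large) to handle the complex case. The same remark applies to the Darboux addendum, where your appeal to the \L{}ojasiewicz property to guarantee that the \emph{linear combination} is Darboux is the right reading of the hypothesis and matches the paper's footnote.
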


\begin{proof}
In view of Theorem \ref{momentsth1}, we only need to show that
(\textbf{P3}) ensures the validity of (\ref{P2}). Suppose that
(\textbf{P3}) is satisfied but (\ref{P2}) does not hold. Then, one
can find $\alpha>j\geq1$ ($\alpha\notin\mathbb{N}$) and a sequence $g_{k}
=\sum_{n=0}^{N_{k}}b_{n,N_{k}}f_{n}$ such that $b_{N_{k},N_{k}}\neq0$ and
either $g_{k}^{(-j-m)}(x)=o(x^{\alpha+j})$ as $x\rightarrow\infty$ or
$g_{k}^{(-j-m)}(x)=o(x^{j-\alpha})$ as $x\rightarrow0^{+}$. The latter implies
that, for each $\phi\in\mathcal{S}(0,\infty)$, 
\begin{equation}
\lim_{\lambda\rightarrow\infty}\frac{1}{\lambda^{\alpha}}\int_{0}^{\infty
}g_{k}^{(-m)}(\lambda x)\phi(x)\mathrm{d}x=0\ \ \mbox{or}\ \ \lim
_{\lambda\rightarrow0^{+}}\lambda^{\alpha}\int_{0}^{\infty}g_{k}
^{(-m)}(\lambda x)\phi(x)\mathrm{d}x=0\,. \label{momentseq4.5}
\end{equation}

Let $\nu$ be the integer corresponding to $\alpha$ in (\textbf{P3}).
Fix $k$ and $\sigma>1$. Taking a nonnegative test function $\phi$ with
$\operatorname*{supp}\phi\subset\lbrack1,\sigma]$ and $\int_{1}^{\sigma}
\phi(x)\mathrm{d}x=1$ in (\ref{momentseq4.5}) and applying the mean-value
theorem \cite[Sect. 11]{estrada-vindasGIntegral}, we obtain that for each
$\lambda$ one can find $a_{\lambda}\in(1,\sigma)$ such that either
\begin{equation}
\lim_{\lambda\rightarrow\infty}\frac{g_{k}^{(-m)}(\lambda
a_{\lambda})}{\lambda^{\alpha}}=0\quad \mbox{or}\quad \lim_{\lambda\rightarrow\infty
}\frac{g_{k}^{(-m)}(a_{\lambda}/\lambda)}{\lambda^{\alpha}}=0\,.
\label{momentseq4.6}
\end{equation}
On the other hand, (\textbf{P3}) tells us that if $N_{k}>\nu$ there are $\sigma>1$, a sequence $\lambda_{n}\rightarrow\infty$, and $C>0$ such that
either $|g_{k}^{(-m)}(\lambda_{n}a)|\geq C\lambda_{n}^{\alpha}$ or
$|g_{k}^{(-m)}(a/\lambda_{n})|\geq C\lambda_{n}^{\alpha}$ for all $a\in
\lbrack1,\sigma]$ and $n\in\mathbb{N}$, contradicting (\ref{momentseq4.6}).

Note that integration by parts in (\ref{momentseq4.5}) leads to
\[
\mbox{either }\ \ \ \lim_{\lambda\rightarrow\infty}\frac{1}{\lambda^{\alpha
-m}}\int_{0}^{\infty}g_{k}(\lambda x)\phi(x)\mathrm{d}x=0\ \ \mbox{or}\ \ \lim
_{\lambda\rightarrow0^{+}}\lambda^{\alpha+m}\int_{0}^{\infty}g_{k}(\lambda
x)\phi(x)\mathrm{d}x=0\,.
\]
If we assume that each $f_{n}$ is a Darboux function, then so is each $g_{k}$.
The mean-value theorem still applies\footnote{See the proof of \cite[Prop.
11.1]{estrada-vindasGIntegral}.} to prove that assuming that \eqref{P2} fails is
contradictory with (\ref{momentseq4.4}). The result has been established.
\end{proof}

It should be noticed that Theorem \ref{momentsth1}, stated in the
Introduction, immediately follows by taking $m=1$ in Theorem \ref{momentsth3}.

Next, we are interested in weigthed moment problems of the form
\begin{equation}
\label{momentseq4.7}\int_{0}^{\infty} x^{\alpha_{n}}f(x)\phi(x)\mathrm{d}
x=a_{n} \ \ \ (\mathrm{C}), \ \ \ n\in\mathbb{N},
\end{equation}
where $f\in\mathcal{S}^{\prime}_{+}$ is a (non-identically zero) locally integrable function (with continuous primitive) on $(0,\infty)$. We point out that (\ref{P1}) holds if and only if
 $\{\alpha_{n}\}_{n=0}^{\infty}$ consists of distinct
complex numbers. We will prove that (\ref{P2}) forces the
sequence $\{\alpha_{n}\}_{n=0}^{\infty}$ to stay off vertical strips except
for finitely many terms.

\begin{theorem}
\label{momentsth4} Let $\{\alpha_{n}\}_{n=0}^{\infty}$ be a sequence of distinct complex numbers with the property there is $n_{0}$ such that $\Re e\:\alpha_{n}\neq\Re e\: \alpha_{m}$ for all distinct $n,m>n_{0}$. Then, every moment problem \eqref{momentseq4.7} is solvable in
$\mathcal{S}(0,\infty)$ if and only if 
\begin{equation}
\label{momentseq4.8}
\lim_{n\to\infty}|\Re e\: \alpha_{n}|=\infty
\end{equation}
and
\begin{enumerate}
\item[(I)] if $\displaystyle\lim_{n\rightarrow\infty}\Re e\:\alpha_{n}=\infty$, then for each $m\in\mathbb{N}$ the $m$-primitive of $f$
satisfies
\begin{equation}
-\infty<\limsup_{x\rightarrow\infty}\frac{\log|f^{(-m)}(x)-P(x)|}{\log x}\,,
\label{momentseq4.9}
\end{equation}
for each polynomial $P$ of degree at most $m-1$;

\item[(II)] if $\displaystyle\lim_{n\rightarrow\infty}\Re e\:\alpha_{n}=-\infty,$ then for each $m\in\mathbb{N}$.
\begin{equation}
\infty<\limsup_{x\rightarrow0^{+}}\frac{\log|f^{(-m)}(x)-P(x)|}{|\log x|}\,,
\label{momentseq4.10}
\end{equation}
for each polynomial $P$ of degree at most $m-1$;
\item[(III)] if $\displaystyle\liminf_{n\rightarrow\infty}\Re e\:\alpha_{n}=-\infty$ and $\displaystyle\limsup_{n\rightarrow\infty
}\Re e\:\alpha_{n}=\infty$, then for each $m\in\mathbb{N}$ both
\eqref{momentseq4.9} and \eqref{momentseq4.10} hold for each polynomial $P$ of degree at most $m-1$.
\end{enumerate}
\end{theorem}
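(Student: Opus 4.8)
The plan is to characterize solvability through the criterion of Theorem \ref{momentsth2}. Since each $f_{n}=x^{\alpha_{n}}f$ is a nonzero locally integrable function, a nonzero linear combination is again such a function and cannot lie in $\mathcal{N}_{0}$, so \eqref{P1} holds precisely when the $\alpha_{n}$ are distinct, which is assumed. Hence everything reduces to deciding when \eqref{P2} holds, and by \eqref{momentseq4.2} this means controlling the Ces\`{a}ro growth of the primitives of $g=\sum_{n=0}^{N}b_{n}x^{\alpha_{n}}f$. First I would record the scaling lemma from the Ces\`{a}ro calculus: for $\beta\in\mathbb{R}$, one has $f(x)=O(x^{\beta})$ $(\mathrm{C})$ (resp. $o$, $\Omega$) at $\infty$ (or at $0^{+}$) if and only if $x^{\alpha}f(x)=O(x^{\beta+\Re e\,\alpha})$ $(\mathrm{C})$ (resp. $o$, $\Omega$). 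The decisive observation is that, for a single function $h$, the critical $O$-exponent and the critical $\Omega$-exponent both equal $\limsup_{x\to\infty}\log|h(x)|/\log x$; consequently a strict gap between the real parts of two exponents translates into a strict gap between the Ces\`{a}ro orders of the corresponding terms $x^{\alpha_{n}}f$.

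With this in hand, the sufficiency of \eqref{momentseq4.8} together with (I)/(II)/(III) would go as follows. Fix $m$ and $\alpha$. By \eqref{momentseq4.8}, choose $\nu$ so large that $|\Re e\,\alpha_{N}|$ exceeds a threshold (to be fixed below) for $N\geq\nu$. Take $N\geq\nu$ with $b_{N}\neq0$ and set $g=\sum_{n=0}^{N}b_{n}x^{\alpha_{n}}f$. The sign of $\Re e\,\alpha_{N}$ selects the endpoint: if $\Re e\,\alpha_{N}$ is large positive I work at $\infty$, if it is large negative I work at $0^{+}$ (in cases (I) and (II) the same endpoint is always selected, in case (III) either may occur). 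Consider the $\infty$-case. Among the terms with $b_{n}\neq0$ the maximal value $\mu=\max\Re e\,\alpha_{n}\geq\Re e\,\alpha_{N}$ is large and, by the distinctness of the real parts beyond $n_{0}$, it is attained at a single index $n^{\ast}$. Writing $g=x^{\alpha_{n^{\ast}}}f\bigl(b_{n^{\ast}}+r(x)\bigr)$ with $r(x)\to0$ like a negative power, the remaining terms are $O(x^{(\mu-\eta)+\ell})$ $(\mathrm{C},m)$ for some gap $\eta>0$, where $\ell$ is the reduced Ces\`{a}ro order of $f$ at $\infty$ furnished by \eqref{momentseq4.9}; by the scaling lemma the dominant term is $\Omega(x^{\mu+\ell})$ $(\mathrm{C},m)$, and since $\mu+\ell>(\mu-\eta)+\ell$ the sum retains the lower bound, so $g=\Omega(x^{\mu+\ell})$ $(\mathrm{C},m)$. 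Choosing the threshold on $|\Re e\,\alpha_{N}|$ so that $\mu+\ell\geq\alpha$ gives the first alternative of \eqref{P2}. The $0^{+}$-case is symmetric, uses the smallest real part and \eqref{momentseq4.10}, and yields the third alternative; this is precisely why case (III) needs both conditions.

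For necessity I would argue contrapositively with single-term problems $g=x^{\alpha_{n}}f$. If \eqref{momentseq4.8} fails there is a bounded subsequence $\Re e\,\alpha_{n_{k}}$; then $x^{\alpha_{n_{k}}}f$ has Ces\`{a}ro order bounded at both endpoints (recall $f$ has finite Ces\`{a}ro order at $0^{+}$ and at $\infty$, the former because $f^{(-m)}$ is continuous near $0$ for $m$ large), so for $\alpha$ large it is neither $\Omega(x^{\alpha})$ at $\infty$ nor $\Omega(x^{-\alpha})$ at $0^{+}$, and \eqref{P2} fails. If instead, say, \eqref{momentseq4.9} fails for some $m$ and $P$, then $f$ is Ces\`{a}ro rapidly decreasing at $\infty$; multiplying by $x^{\alpha_{n}}$ preserves this (scaling lemma), so along a subsequence with $\Re e\,\alpha_{n}\to+\infty$ the terms $x^{\alpha_{n}}f$ are $o(x^{\alpha})$ $(\mathrm{C})$ at $\infty$ while at $0^{+}$ their order tends to $+\infty$, ruling out both alternatives; thus \eqref{P2} fails. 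Cases (II) and (III) are handled at $0^{+}$ and by combining both endpoints.

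The main obstacle is the scaling lemma together with the leading-term domination: one must show that, after $m$-fold integration, the single extreme-real-part term governs the Ces\`{a}ro order despite the possibly violent oscillation of the intermediate primitives $f^{(-(m-k))}$ that appear in the Leibniz expansion of $\bigl(x^{\alpha}f\bigr)^{(-m)}$, and one must reconcile the polynomial ambiguity in \eqref{momentseq4.9}--\eqref{momentseq4.10} with the quotient $\mathcal{S}'_{+}/\mathcal{N}_{0}$. The clean point that rescues the estimate is that for a single function the $O$- and $\Omega$-critical exponents coincide, so the finitely many subdominant terms, having strictly smaller real part, cannot overwhelm the $\Omega$-bound coming from (I)/(II)/(III).
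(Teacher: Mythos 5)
Your reduction to \eqref{P1} and \eqref{P2}, and your necessity argument (finiteness of the set of indices with $|\Re e\,\alpha_{n}|\leq M$, and the observation that failure of \eqref{momentseq4.9} makes every $x^{\alpha_{n}}f$ of bounded Ces\`{a}ro order at both endpoints), are essentially the paper's. The gap is in the sufficiency half. You argue directly: isolate the index $n^{\ast}$ of maximal real part, claim $x^{\alpha_{n^{\ast}}}f=\Omega(x^{\mu+\ell})$ $(\mathrm{C},m)$ by a ``scaling lemma,'' and let it dominate the remaining terms. The $O$-direction of that scaling lemma is fine: by \eqref{momentseq4.11}, $(x^{\alpha_{n}}f)^{(-m)}$ is a sum of $m+1$ terms each of which is $O(x^{\Re e\,\alpha_{n}+\ell+\varepsilon})$ modulo a polynomial. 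But the $\Omega$-direction --- the one you actually need for the dominant term --- does not follow from the coincidence of the $O$- and $\Omega$-critical exponents of the single function $f^{(-m)}-P$. The quantity $(x^{\alpha_{n^{\ast}}}f)^{(-m)}$ is the \emph{sum} of $m+1$ terms all of the same order $x^{\mu+\ell}$, and nothing prevents cancellation among them; a lower bound on one summand plus upper bounds of the same order on the others yields no lower bound on the sum. Your ``clean point'' addresses only the exponent of a single function, not this cancellation, so the step ``the dominant term is $\Omega(x^{\mu+\ell})$ $(\mathrm{C},m)$'' is unproved. (A secondary unresolved point, which you flag yourself, is the polynomial ambiguity in \eqref{momentseq4.9}--\eqref{momentseq4.10} when $m\geq 1$.)

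The paper's proof avoids this entirely by running the sufficiency argument contrapositively and only ever propagating \emph{upper} bounds. If \eqref{P2} failed, some $g_{N}=\sum_{n=0}^{N}b_{n}f_{n}$ with $b_{N}\neq 0$ and $N$ large would satisfy $g_{N}^{(-m)}(x)=P(x)+O(x^{\alpha+m})$ with $\alpha$ far below $\max_{b_n\neq0}\Re e\,\alpha_{n}$; the ``division'' identity
\[
f^{(-m)}=\sum_{k=0}^{m}(-1)^{k}k!\binom{m}{k}\left(\frac{g_{N}^{(-m)}}{F_{N}^{(k)}}\right)^{(-k)}+Q,\qquad F_{N}(x)=\sum_{n=0}^{N}b_{n}x^{\alpha_{n}},
\]
then transfers this $O$-bound to $f^{(-m)}-Q$ and forces it to decay faster than every power of $x$ (resp.\ at $0^{+}$), contradicting \eqref{momentseq4.9} (resp.\ \eqref{momentseq4.10}). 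Upper bounds compose harmlessly through such identities, which is exactly why the contrapositive works where the direct $\Omega$-estimate stalls. To repair your argument you would either need to prove the $\Omega$-scaling statement for Ces\`{a}ro primitives (which in substance requires this same division identity), or switch to the contrapositive formulation.
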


\begin{proof}
For each $n$, find $f_{n}\in\mathcal{S}'_{+}$ such that $f_{n}(x)=x^{\alpha
_{n}}f(x)$ for $x\in(0,\infty)$. It is clearly that the sequence $\{f_{n}\}_{n=0}^{\infty}$ is
linearly independent on $(0,\infty)$ and consequently $\mathcal{N}_{0}$ has trivial intersection with the linear span of $\{f_{n}\}_{n=0}^{\infty}$.

Assume that every moment problem (\ref{momentseq4.7}) is solvable in
$\mathcal{S}(0,\infty)$. Fix $M>0$ and set $A_{M}$\thinspace$=\{n\in
\mathbb{N}:\:-M\leq\Re e$\thinspace$\alpha_{n}\leq M\}$. Since
$f\in\mathcal{S}_{+}^{\prime}$, there is $m\geq1$ and $\theta>m$ such that
$f^{(-m)}\in C[0,\infty)$, $f^{(-m)}(x)=O(x^{\theta})$ as $x\rightarrow\infty
$, and $f^{(-m)}(x)=O(x^{M+1/2})$ as $x\to0^{+}$. On the other hand (cf. \cite[Eq. (6.53), p. 302]{estrada-kanwal2002} or \cite[ Lemma 1.3]{campos}),
\begin{equation}
f_{n}^{(-m)}=\sum_{k=0}^{m}(-1)^{k}k!\binom{m}{k}\binom{\alpha_{n}}
{k}(x^{\alpha_{n}-k}f^{(-m)})^{(-k)}+P\ \ \ \mbox{ on }(0,\infty).
\label{momentseq4.11}%
\end{equation}
where, $P(x)$ is a polynomial of degree at most $m-1$. So, if $n\in A_{M}$,
one has that $f_{n}^{(-m)}(x)=O(x^{\theta+M})$ as $x\rightarrow\infty$ and
$f_{n}^{(-m)}(x)=Q(x)+O(x^{1/2})$ as $x\rightarrow0^{+}$ for some polynomial
$Q$ of degree at most $m-1$. In particular $f_{n}^{(-m)}\in C(0,\infty)$,
$f_{n}(x)=O(x^{\alpha})$ $(\mathrm{C},m)$, $x\rightarrow\infty$, and
$f_{n}=O(x^{-\alpha})$ $(\mathrm{C},m)$, $x\rightarrow0^{+}$, for $\alpha
\geq\max\{\theta+M-m,m-1/2\}$. The property (\ref{P2}) implies that this
cannot hold for infinitely many $n$, so that $A_{M}$ must be finite.

Suppose that $\displaystyle\lim_{n\rightarrow\infty}\Re e$\thinspace
$\alpha_{n}=\infty$ but that (\ref{momentseq4.9}) is false for some $m$, i.e.,
$f^{(-m)}(x)=Q(x)+o(x^{-\alpha_{n}})$ as $x\rightarrow\infty$, for every $n>0$, where $Q$ is a polynomial of degree less than $m$. Since 
$$
0=\sum_{k=0}^{m}(-1)^{k}k!\binom{m}{k}\binom{\alpha_{n}}
{k}(x^{\alpha_{n}-k}Q)^{(m-k)} \quad \mbox{ on } (0,\infty),$$
the formula
(\ref{momentseq4.10}) then yields $f_{n}^{(-m)}\in C(0,\infty)$
and $f_{n}(x)=O(1)$ $(\mathrm{C},m)$ as $x\rightarrow\infty$, for each
$n$. Furthermore, since $\Re e\:\alpha_{n}$ is bounded from below, we
also get $f_{n}(x)=O(x^{-\alpha})$ $(\mathrm{C},m)$, for some $\alpha$ (and a
possibly enlarged $m$), whence we deduce that (\ref{momentseq4.2}) (and hence (\ref{P2})) cannot hold. The
cases (II) and (III) can be treated in a similar fashion.

An analogous argument, with aid of the relation (cf. \cite[ Lemma 1.3]{campos})
\begin{equation*}
f^{(-m)}=\sum_{k=0}^{m}(-1)^{k}k!\binom{m}{k}
\left(\frac{g_{N}^{(-m)}}{F_{N}^{(k)}}\right)^{(-k)}+Q\ ,
\end{equation*}
where $F_{N}(x)=\sum_{n=0}^{N}b_{n}x^{\alpha_{n}}$, $g_{N}(x)=\sum_{n=0}^{N}b_{n}f_{n}$, and $Q$ a certain polynomial (depending on $F_{N}$) of degree at most $m-1$, shows that the conditions are sufficient for $(\ref{P2})$.
\end{proof}

Employing the same method as in proof of Theorem \ref{momentsth3}, one deduces:

\begin{proposition}
\label{momentsp2} Let $\{\alpha_{n}\}_{n=0}^{\infty}$ be a sequence of distinct complex numbers with the property there is $n_{0}$ such that $\Re e\:\alpha_{n}\neq\Re e\: \alpha_{m}$ for all distinct $n,m>n_{0}$. Suppose that \eqref{momentseq4.8} holds and there are
$m\geq1$ and $\sigma>1$ such that

\begin{enumerate}
\item[(I)] if $\displaystyle\lim_{n\rightarrow\infty}\Re e\:
\alpha_{n}=\infty$, then
\begin{equation}
-\infty<\limsup_{x\rightarrow\infty}\frac{\log\displaystyle\inf_{a\in
\lbrack1,\sigma]}|f^{(-m)}(ax)-P(ax)|}{\log x}\,, \label{momentseq4.12}
\end{equation}
for each polynomial $P$ of degree at most $m-1$;
\item[(II)] if $\displaystyle\lim_{n\rightarrow\infty}\Re e\:\alpha_{n}=-\infty,$ then
\begin{equation}
\liminf_{x\rightarrow0^{+}}\frac{\displaystyle\log\inf_{a\in\lbrack1,\sigma
]}|f^{(-m)}(ax)-P(ax)|}{|\log x|}<\infty\,, \label{momentseq4.13}
\end{equation}
for each polynomial $P$ of degree at most $m-1$;

\item[(III)] if $\displaystyle\liminf_{n\rightarrow\infty}\Re e\:\alpha_{n}=-\infty$ and $\displaystyle\limsup_{n\rightarrow\infty
}\Re e$\thinspace$\alpha_{n}=\infty$, then both \eqref{momentseq4.12} and
\eqref{momentseq4.13}) hold for each polynomial $P$ of degree at most $m-1$.
\end{enumerate}
Then every moment problem \eqref{momentseq4.7} has solutions $\phi
\in\mathcal{S}(0,\infty)$. If $f$ is a Darboux function, one may take $m=0$ (so that $P=0$) in
\eqref{momentseq4.12} and \eqref{momentseq4.13}.
\end{proposition}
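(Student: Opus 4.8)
The plan is to show that hypotheses (I)--(III), together with \eqref{momentseq4.8}, imply property (\textbf{P3}) for the concrete sequence $f_n(x)=x^{\alpha_n}f(x)$ with the fixed $m$ of the statement, and then to quote Theorem \ref{momentsth3}. First I would dispose of \eqref{P1'}: distinctness of the $\alpha_n$ makes $\{f_n\}_{n=0}^\infty$ linearly independent on $(0,\infty)$, exactly as recorded in the proof of Theorem \ref{momentsth4}. It then remains to verify (\textbf{P3}), after which Theorem \ref{momentsth3} yields solvability in $\mathcal{S}(0,\infty)$; recall that its mechanism is that the window-wide lower bound on $\inf_{a\in[1,\sigma]}|\sum_{n=0}^Nb_nf_n^{(-m)}(ax)|$ supplied by (\textbf{P3}) is precisely what contradicts the pointwise decay that a failure of \eqref{P2} produces through the mean-value theorem.

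The heart of the matter is a transfer of the lower bound. Setting $F_N(x)=\sum_{n=0}^Nb_nx^{\alpha_n}$ and $g_N=F_Nf$, so that $\sum_{n=0}^Nb_nf_n^{(-m)}=g_N^{(-m)}$, I would bound $\inf_{a\in[1,\sigma]}|g_N^{(-m)}(ax)|$ from below in terms of $\inf_{a\in[1,\sigma]}|f^{(-m)}(ax)-P(ax)|$ by means of the $F_N$-version of the Leibniz identity \eqref{momentseq4.11},
\[
g_N^{(-m)}=\sum_{k=0}^m(-1)^k\binom{m}{k}\bigl(F_N^{(k)}f^{(-m)}\bigr)^{(-k)}+P_N,
\]
with $P_N$ a polynomial of degree at most $m-1$. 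In case (I), where $\Re e\,\alpha_n\to\infty$, taking $N$ large and invoking the eventual distinctness of the real parts singles out, among the nonzero terms of $g_N$, a dominant exponent $\alpha_{n^\ast}$ with $\Re e\,\alpha_{n^\ast}\ge\Re e\,\alpha_N$, so that $F_N(x)\sim b_{n^\ast}x^{\alpha_{n^\ast}}$ as $x\to\infty$. Condition \eqref{momentseq4.12} furnishes $C>0$ with $\inf_{a\in[1,\sigma]}|f^{(-m)}(ax)-P(ax)|=\Omega(x^{-C})$, and the identity propagates this to $\inf_{a\in[1,\sigma]}|g_N^{(-m)}(ax)|=\Omega(x^{\Re e\,\alpha_{n^\ast}-C})$. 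Since \eqref{momentseq4.8} forces $\Re e\,\alpha_N\to\infty$, the exponent $\Re e\,\alpha_{n^\ast}-C$ exceeds any prescribed $\alpha>0$ once $N$ is large, which is exactly (\textbf{P3}); cases (II) and (III) are symmetric, worked at $x\to0^+$ via \eqref{momentseq4.13}.

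For the final assertion I would take $m=0$, so that $f^{(-0)}=f$ and $g_N=F_Nf$. When $f$ is Darboux so is $g_N$, and I would verify the Darboux form of (\textbf{P3}), namely bound \eqref{momentseq4.4} on $\inf_{a\in[1,\sigma]}|g_N(ax)|$, directly from \eqref{momentseq4.12}--\eqref{momentseq4.13} with $P=0$; the transfer is the same but simpler, since no primitives intervene. Theorem \ref{momentsth3} then applies in its Darboux formulation, in which the mean-value theorem is used on $g_N$ itself rather than on $g_N^{(-m)}$.

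The step I expect to be genuinely delicate is the lower-bound transfer of the second paragraph. The $m+1$ summands of the Leibniz identity have leading contributions of the common order $x^{\Re e\,\alpha_{n^\ast}}$ times the growth of $f^{(-m)}$, and these partially cancel, so one must track the surviving factor (of size comparable to $(\Re e\,\alpha_{n^\ast})^{-m}$, harmless since it is a constant not affecting the exponent in $x$), absorb the polynomial remainder $P_N$ and the subdominant powers of $F_N$ into the ``for each polynomial $P$ of degree at most $m-1$'' clause, and, crucially, carry the infimum over the scaling window $[1,\sigma]$ through each of these operations. It is precisely to keep this window intact that the hypotheses are phrased with $\inf_{a\in[1,\sigma]}$ rather than pointwise.
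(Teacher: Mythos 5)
Your overall architecture (check \eqref{P1'}, establish (\textbf{P3}), quote Theorem \ref{momentsth3}) inverts the direction of the paper's argument. The paper's ``same method as Theorem \ref{momentsth3}'' is a contrapositive: one assumes \eqref{P2} fails, so that some $g_N=F_Nf$ with $F_N(x)=\sum_{n=0}^N b_nx^{\alpha_n}$, $b_N\neq 0$, satisfies \emph{upper} Ces\`{a}ro bounds; upper bounds pass harmlessly through the primitives in the Leibniz identity (in the reversed form used in the sufficiency part of Theorem \ref{momentsth4}, expressing $f^{(-m)}$ through $g_N^{(-m)}/F_N^{(k)}$), and the mean-value theorem then produces, for each large scale, \emph{some} point $a_\lambda x$ in the window at which $f^{(-m)}-Q$ is small --- which is exactly what the windowed infimum hypotheses \eqref{momentseq4.12}--\eqref{momentseq4.13} forbid. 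You instead try to push a \emph{lower} bound forward, from $f^{(-m)}-P$ to $g_N^{(-m)}$, in order to verify (\textbf{P3}).

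That forward transfer is the genuine gap, and I do not believe it can be closed as described. In the identity $g_N^{(-m)}=\sum_{k=0}^m(-1)^k\binom{m}{k}\bigl(F_N^{(k)}h\bigr)^{(-k)}+\tilde P_N$ with $h=f^{(-m)}-P$, only the $k=0$ term $F_N(ax)h(ax)$ is controlled by the hypothesis on the window $a\in[1,\sigma]$. Each term with $k\geq 1$ is an integral of $F_N^{(k)}h$ over all of $(0,ax)$, and \eqref{momentseq4.12} is a $\limsup$ condition: it constrains $h$ only on the windows $[x_j,\sigma x_j]$ along a sequence, giving no control (above or below) on $h$ elsewhere, so these primitive terms are essentially arbitrary and can cancel the $k=0$ term across an entire window; equivalently, nothing prevents the continuous function $g_N^{(-m)}$ from vanishing somewhere in every window, which makes $\inf_{a\in[1,\sigma]}|g_N^{(-m)}(ax)|=0$ and destroys (\textbf{P3}). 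Your ``surviving factor $\sim(\Re e\,\alpha_{n^*})^{-m}$'' computation is valid only when $h$ is asymptotic to a fixed power of $x$, which the hypothesis does not provide; since (\textbf{P3}) is strictly stronger than \eqref{P2}, you may well be trying to prove a false intermediate statement. (Your treatment of \eqref{P1'} and of the Darboux case $m=0$ is fine: with no primitives the transfer is a pointwise product and does go through.) To repair the main case, reverse the direction: negate \eqref{P2}, transfer the resulting upper bounds to $f^{(-m)}-Q$ via the reversed identity, and apply the mean-value theorem there.
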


\section{The measure weighted moment problem}\label{Section: measure}
We consider here the measure weighted moment problem
\begin{equation}\label{measeq1}
\int_{0}^{\infty} x^{\alpha_{n}}\phi(x)\mathrm{d}F(x)=a_{n}, \quad n\in\mathbb{N}\,,
\end{equation}
where we assume that $\{\alpha_{n}\}_{n=0}^{\infty}$ is sequence of \emph{real} numbers and $F$ is a function of local bounded variation on $(0,\infty)$ that satisfies
\begin{equation}\label{measeq2}
\int_{0}^{1}x^{\sigma_{1}} |\mathrm{d}F|(x)<\infty
\end{equation}
and
\begin{equation}\label{measeq3}
\int_{1}^{\infty}x^{-\sigma_{1}} |\mathrm{d}F|(x)<\infty\,,
\end{equation}
for some $\sigma_{1}>0$, where $|\mathrm{d}F|$ is the total variation measure of $\mathrm{d}F$. This ensures $\mathrm{d}F\in \mathcal{S}'(0,\infty)$.

We start with a necessary condition for the solvability of (\ref{measeq1}). 
\begin{proposition}\label{meas prop 1}
Suppose that every moment problem \eqref{measeq1} has solutions $\phi\in\mathcal{S}(0,\infty)$, then the numbers $\alpha_{n}$ are distinct, $|\alpha_{n}|\to\infty$ and there is $\sigma_{0}>0$ such that 

\begin{enumerate}
\item[(I)] if $\displaystyle\lim_{n\rightarrow\infty} \alpha_{n}=\infty$, then
\begin{equation}
\label{measeq4}
\int_{1}^{\infty}x^{-\sigma_{0}} |\mathrm{d}F|(x)=\infty\:;
\end{equation}
\item[(II)] if $\displaystyle\lim_{n\rightarrow\infty}\alpha_{n}=-\infty,$ then
\begin{equation}
\label{measeq5}
\int_{0}^{1}x^{\sigma_{0}} |\mathrm{d}F|(x)=\infty\:;
\end{equation}
\item [(III)] if $\displaystyle\liminf_{n\rightarrow\infty}\alpha_{n}=-\infty$ and $\displaystyle\limsup_{n\rightarrow\infty
}\alpha_{n}=\infty$, then both \eqref{measeq4} and
\eqref{measeq5} hold.
\end{enumerate}
\end{proposition}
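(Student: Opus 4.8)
\medskip

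The plan is to deduce all the stated necessary conditions from property \eqref{P2}, exploiting the fact that solvability of every moment problem \eqref{measeq1} forces the distribution sequence $f_{n}(x) = x^{\alpha_{n}}\mathrm{d}F(x)$ (viewed in $\mathcal{S}'_{+}$) to be Ces\`{a}ro admissible, via Theorem \ref{momentsth2}. The distinctness of the $\alpha_{n}$ is the easy part: if $\alpha_{n}=\alpha_{m}$ for $n\neq m$, then $f_{n}=f_{m}$, so that $\mathcal{N}_{0}$ meets the span of $\{f_{n}\}$ (the difference $f_{n}-f_{m}=0$ is a delta sum with zero coefficients, violating \eqref{P1} via \eqref{P1'}); hence solvability fails. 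So the core of the work is to show that \eqref{P2} fails unless $|\alpha_{n}|\to\infty$ and the appropriate divergence integral \eqref{measeq4} or \eqref{measeq5} holds.

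\medskip

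The main obstacle, and the step I would carry out with the most care, is translating the Ces\`{a}ro growth estimates in \eqref{P2} into integrability statements about $|\mathrm{d}F|$. First I would establish that $\{\alpha_{n}\}$ must satisfy $|\alpha_{n}|\to\infty$: if not, infinitely many $\alpha_{n}$ lie in a bounded interval $[-M,M]$, and I would show that on this bounded set of exponents the primitives $f_{n}^{(-m)}$ admit uniform polynomial Ces\`{a}ro bounds at both $0^{+}$ and $\infty$ (using \eqref{measeq2}, \eqref{measeq3} to control the moments of $|\mathrm{d}F|$ and an expansion of $f_n^{(-m)}$ analogous to \eqref{momentseq4.11}), which contradicts \eqref{P2} exactly as in the proof of Theorem \ref{momentsth4}. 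The key computation is the Stieltjes integral estimate: for a single term, one has, up to the polynomial ambiguity,
\[
f_{n}^{(-m)}(x)=\frac{1}{(m-1)!}\int_{0}^{x}(x-t)^{m-1}t^{\alpha_{n}}\,\mathrm{d}F(t)+P(x),
\]
and I would bound this integral by splitting $[0,x]$ at a suitable point and inserting the powers of $t$ into $|\mathrm{d}F|$.

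\medskip

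For the implication giving \eqref{measeq4} in case (I), I would argue by contraposition: suppose $\alpha_{n}\to\infty$ but $\int_{1}^{\infty}x^{-\sigma_{0}}|\mathrm{d}F|(x)<\infty$ for \emph{every} $\sigma_{0}>0$. Then for each fixed $n$ one can choose $m$ large enough that the representation above converges and yields $f_{n}^{(-m)}(x)=P(x)+O(x^{\beta})$ as $x\to\infty$ for some $\beta$ with $\beta - m <\alpha_{n}$ arbitrarily controllable; combined with the lower bound $\alpha_n \geq \alpha_0$ giving a uniform estimate at $0^{+}$, this forces the growth relations in \eqref{momentseq4.2} to fail for infinitely many $n$, i.e.\ \eqref{P2} is violated, and hence some moment problem \eqref{measeq1} is unsolvable. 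Case (II) is the mirror image under $x\mapsto 1/x$, localizing the estimate near $0^{+}$ and using \eqref{measeq2}; and case (III) follows by applying the arguments of (I) at $\infty$ and of (II) at $0^{+}$ simultaneously to the subsequences of $\alpha_{n}$ tending to $\pm\infty$. The delicate point throughout is keeping the polynomial $P$ of degree $\le m-1$ arising in the Ces\`{a}ro primitive separate from the genuine asymptotic term, so that the contradiction with \eqref{P2}, which is stated modulo exactly such polynomials, is legitimate.
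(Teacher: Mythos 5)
Your overall strategy is the same as the paper's: reduce everything to \eqref{P1} and \eqref{P2} via Theorem \ref{momentsth2}, and derive contradictions from polynomial bounds on the primitives $\int_1^x t^{\alpha_n}\,\mathrm{d}F(t)$. Your treatment of distinctness and of $|\alpha_n|\to\infty$ matches the paper's argument (the paper works directly with the first Stieltjes primitive rather than the product expansion \eqref{momentseq4.11}, which is stated for a locally integrable weight and not for a measure, but your splitting estimate accomplishes the same thing).

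The gap is in case (I). From the contrapositive hypothesis that $\int_1^\infty x^{-\sigma_0}|\mathrm{d}F|(x)<\infty$ for every $\sigma_0>0$ you cannot conclude that $f_n^{(-m)}(x)=P(x)+O(x^\beta)$ with $\beta-m<\alpha_n$ ``arbitrarily controllable'': finiteness of the \emph{negative} moments of $|\mathrm{d}F|$ on $[1,\infty)$ gives no control whatsoever on $\int_1^x t^{\alpha_n}\,\mathrm{d}F(t)$ when $\alpha_n$ is large and positive, and increasing $m$ only multiplies by further powers of $x$ --- taking more primitives can never decrease growth. Concretely, for $\mathrm{d}F(x)=x^{-1}\mathrm{d}x$ on $[1,\infty)$ one has $\int_1^\infty x^{-\sigma_0}|\mathrm{d}F|(x)<\infty$ for all $\sigma_0>0$, yet $\int_1^x t^{\alpha_n}\,\mathrm{d}F(t)\sim x^{\alpha_n}/\alpha_n$, so no bound with a controllable exponent holds for any $m$. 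What the contradiction with \eqref{P2} actually requires is finiteness of the \emph{positive} moments $\int_1^\infty t^{\alpha_n}|\mathrm{d}F|(t)$ for every $n$: this yields $\int_1^x t^{\alpha_n}\,\mathrm{d}F(t)=O(1)$, hence $f_n(x)=O(1)$ $(\mathrm{C},1)$ at infinity for every $n$, which together with the bound \eqref{measeq6} at the origin (valid because $\alpha_n$ is bounded below in case (I)) violates \eqref{P2}. This is the route the paper takes. Note that finiteness of all positive moments is equivalent to $\int_1^\infty x^{-\sigma}|\mathrm{d}F|(x)<\infty$ for every \emph{real} $\sigma$, not merely every $\sigma>0$, i.e., to the Mellin abscissa being $-\infty$; so to make the contrapositive close you must read the divergence condition \eqref{measeq4} as Corollary \ref{meas cor 1} states it (finite abscissa of convergence). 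The same correction is needed in your mirrored treatment of case (II).
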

\begin{proof} 
Let $f_{n}$ be an extension of $x^{\alpha_{n}}\mathrm{d}F(x)$ to $\mathcal{S}'_{+}$. Since \eqref{P1} holds, the $\alpha_{n}$'s should be distinct.
If $|\alpha_{n}|\leq M$ for all $n\in\mathbb{N}$, using the assumptions \eqref{measeq2} and \eqref{measeq3}, we would have
$$
\left|\sum_{n=0}^{N}b_{n}\int_{1}^{x}t^{\alpha_{n}}\mathrm{d}F(t)\right|= O(x^{M+\sigma_{1}}) \quad \mbox{as }x\to\infty$$
and 
\begin{equation}
\label{measeq6}
\left|\sum_{n=0}^{N}b_{n}\int_{1}^{x}t^{\alpha_{n}}\mathrm{d}F(t)\right|= O(x^{-M-\sigma_{1}}) \quad \mbox{as }x\to0^{+}\:,
\end{equation}
contradicting \eqref{P2} for $\{f_{n}\}_{n=0}^{\infty}$. If $\alpha_{n}\to\infty$, a bound \eqref{measeq6} must hold because $\alpha_{n}$ is bounded from below. If (\ref{measeq4}) were not valid, then we would have $\int_{1}^{\infty}t^{\alpha_{n}}|\mathrm{d}F|(t)<\infty$ for all $n\in\mathbb{N}$, leading to 
$$
\left|\sum_{n=0}^{N}b_{n}\int_{1}^{x}t^{\alpha_{n}}\mathrm{d}F(t)\right|= O(1)\:, \quad x\geq1\:.
 $$
This again contradicts \eqref{P2}. The cases (II) and (III) can be treated similarly.

\end{proof}

Note that either \eqref{measeq4} or \eqref{measeq5}  always yields that the support of $\mathrm{d}F$ is an infinite subset of $(0,\infty)$. If additionally the $\alpha_{n}$'s are distinct, the latter implies \eqref{P1} for any distribution sequence $\{f_n\}_{n=0}^{\infty}$ of $\mathcal{S}'_{+}$ that extends $x^{\alpha_{n}}\mathrm{d}F(x)$, that is,  $f_{n}(x)=x^{\alpha_{n}}\mathrm{d}F(x)$ on $(0,\infty)$.

We give the converse of Proposition \ref{meas prop 1} for non-negative measures. In fact, the next theorem gives a complete characterization of those non-negative measures $\mathrm{d}F$ for which \eqref{measeq1} is always solvable in $\mathcal{S}(0,\infty)$.

\begin{theorem}\label{meas thm 1}  Let $F$ be non-decreasing on $(0,\infty)$ satisfying \eqref{measeq2} and \eqref{measeq3}, and let $\{\alpha_{n}\}_{n=0}^{\infty}$ be a sequence of distinct real numbers. Then, every moment problem \eqref{measeq1} has solutions $\phi\in\mathcal{S}(0,\infty)$ if and only if $|\alpha_{n}|\to\infty$ and there is $\sigma_{0}>0$ such that 

\begin{enumerate}
\item[(I)] if $\displaystyle\lim_{n\rightarrow\infty}
\alpha_{n}=\infty$, then \eqref{measeq4} holds; 
\item[(II)] if $\displaystyle\lim_{n\rightarrow\infty}\alpha_{n}=-\infty,$ then \eqref{measeq5} holds;
\item [(III)] if $\displaystyle\liminf_{n\rightarrow\infty}\alpha_{n}=-\infty$ and $\displaystyle\limsup_{n\rightarrow\infty
}\alpha_{n}=\infty$, then both \eqref{measeq4} and
\eqref{measeq5} hold.
\end{enumerate}
\end{theorem}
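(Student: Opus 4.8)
The plan is to deduce the theorem from the distributional criterion, Theorem \ref{momentsth2}: fixing for each $n$ an extension $f_{n}\in\mathcal{S}'_{+}$ of $x^{\alpha_{n}}\mathrm{d}F$ to $(0,\infty)$, I would show that the stated conditions are equivalent to \eqref{P1} and \eqref{P2} for $\{f_{n}\}_{n=0}^{\infty}$. Necessity is immediate: since $\mathrm{d}F\geq 0$ we have $|\mathrm{d}F|=\mathrm{d}F$, so the asserted conditions are exactly the ones produced by Proposition \ref{meas prop 1}. The whole content is therefore the sufficiency direction, which I would organize around verifying \eqref{P2}; property \eqref{P1} is already granted by the distinctness of the $\alpha_{n}$ together with the fact, noted right after Proposition \ref{meas prop 1}, that \eqref{measeq4} or \eqref{measeq5} forces $\operatorname{supp}\mathrm{d}F$ to be an infinite subset of $(0,\infty)$.

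The analytic heart is a quantitative lemma turning the qualitative divergence \eqref{measeq4} into a growth rate. I claim that if \eqref{measeq4} holds and $\beta>\alpha+1-\sigma_{0}$, then
\[
\int_{1}^{x} t^{\beta}\,\mathrm{d}F(t)=\Omega\!\left(x^{\alpha+1}\right)\qquad\text{as } x\to\infty .
\]
To prove it I would argue by contradiction: writing $G(x)=\int_{1}^{x}t^{\beta}\,\mathrm{d}F$ and assuming $G(x)=o(x^{\alpha+1})$, a Stieltjes integration by parts applied to $\int_{1}^{X}t^{-\sigma_{0}}\,\mathrm{d}F=\int_{1}^{X}t^{-\sigma_{0}-\beta}\,\mathrm{d}G(t)$ shows that both the boundary term $X^{-\sigma_{0}-\beta}G(X)$ and the integral $(\sigma_{0}+\beta)\int_{1}^{X}t^{-\sigma_{0}-\beta-1}G(t)\,\mathrm{d}t$ remain bounded precisely when $\beta>\alpha+1-\sigma_{0}$; this forces $\int_{1}^{\infty}t^{-\sigma_{0}}\,\mathrm{d}F<\infty$, contradicting \eqref{measeq4}. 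The sign condition $\mathrm{d}F\geq 0$ is essential here, since it makes $\int_{1}^{\infty}t^{-\sigma_{0}}\,\mathrm{d}F=\infty$ mean genuine divergence rather than oscillation. The companion statement near the origin, using \eqref{measeq5}, follows by the substitution $x\mapsto 1/x$, which interchanges the two hypotheses.

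With the lemma in hand I would verify \eqref{P2} with the choice $m_{j}=j+2$; because the $(m_{j})$-primitive of a measure is continuous on $(0,\infty)$ for $m_{j}\geq 2$, the continuity requirement in \eqref{P2} is automatic and only the dichotomy for the two $O$-conditions remains. Fix $j$ and $\alpha$, and pick $\nu$ so large that $|\alpha_{n}|>\alpha+1-\sigma_{0}$ for $n\geq\nu$ (possible since $|\alpha_{n}|\to\infty$). Assume the hypotheses of \eqref{P2} hold for some $N\geq\nu$ and let $N'$ be the largest index in $[\nu,N]$ with $b_{N'}\neq 0$ (if none exists we are done). Set $F_{N'}(t)=\sum_{n=0}^{N'}b_{n}t^{\alpha_{n}}$ and $g_{N'}=F_{N'}\,\mathrm{d}F$ on $(0,\infty)$. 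If $\alpha_{N'}>0$, the leading exponent $\beta=\max\{\alpha_{n}:b_{n}\neq 0\}$, attained at some index $n^{\ast}$, satisfies $\beta\geq\alpha_{N'}>\alpha+1-\sigma_{0}$, and \eqref{measeq4} is in force; since $F_{N'}$ has constant sign with $|F_{N'}(t)|\geq\tfrac12|b_{n^{\ast}}|t^{\beta}$ for all large $t$, restricting the representation $g_{N'}^{(-m_{j})}(x)=\frac{1}{(m_{j}-1)!}\int_{(0,x]}(x-t)^{m_{j}-1}F_{N'}(t)\,\mathrm{d}F(t)$ to $t\in(1,x/2]$ and discarding a polynomial of degree $\leq m_{j}-1$ yields, along a suitable sequence $x_{k}\to\infty$,
\[
\bigl|g_{N'}^{(-m_{j})}(x_{k})-P(x_{k})\bigr|\gtrsim x_{k}^{\,m_{j}-1}\int_{1}^{x_{k}/2}t^{\beta}\,\mathrm{d}F(t)\gtrsim x_{k}^{\,\alpha+m_{j}},
\]
for every polynomial $P$ of degree $\leq m_{j}-1$, whence $g_{N'}=\Omega(x^{\alpha})$ $(\mathrm{C},m_{j})$ as $x\to\infty$, contradicting the first $O$-hypothesis. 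If instead $\alpha_{N'}<0$, the same reasoning applied near the origin (through the $x\mapsto 1/x$ form of the lemma) gives $g_{N'}=\Omega(x^{-\alpha})$ $(\mathrm{C},m_{j})$ as $x\to 0^{+}$. The three cases (I), (II), (III) are arranged exactly so that whichever sign $\alpha_{N'}$ carries for large $N'$, the matching divergence \eqref{measeq4} or \eqref{measeq5} is available. Since $b_{n}=0$ for $N'<n\leq N$ we have $g_{N}=g_{N'}$, so in either case a hypothesis of \eqref{P2} is violated; hence $b_{\nu}=\dots=b_{N}=0$, which establishes \eqref{P2}.

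I expect the main obstacle to be the passage from the scalar lemma to the Cesàro statement for $g_{N'}^{(-m_{j})}$: one must simultaneously replace $F_{N'}$ by its leading monomial, which is legitimate only for large $t$ and relies on $F_{N'}$ having eventually constant sign so that no cancellation occurs against the positive measure $\mathrm{d}F$, and absorb the unavoidable polynomial ambiguity of the Cesàro symbols together with the contribution of the regularization near the origin, which is harmless only because $\alpha>0$ renders every degree-$(m_{j}-1)$ polynomial $o(x^{\alpha+m_{j}})$. The $x\to 0^{+}$ case carries the extra bookkeeping of reindexing through $x\mapsto 1/x$ but introduces no new difficulty.
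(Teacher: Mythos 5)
Your overall route coincides with the paper's: necessity is read off from Proposition \ref{meas prop 1}, property \eqref{P1} comes from the distinctness of the $\alpha_{n}$ plus the infinitude of $\operatorname{supp}\mathrm{d}F$, and sufficiency is reduced to \eqref{P2} by exploiting the positivity of $\mathrm{d}F$ to replace $F_{N'}$ by its leading monomial, bounding the Ces\`{a}ro kernel from below on a subinterval, and running a Stieltjes integration by parts against $t^{-\sigma_{0}}$. Your quantitative lemma is exactly the contrapositive of the computation the paper performs (the paper derives $\int_{1}^{\infty}t^{\alpha_{N_{k}}-\alpha-1}\,\mathrm{d}F<\infty$ from the $O$-bound and contradicts \eqref{measeq4} directly, using $\alpha_{N_{k}}\to\infty$ to push the exponent past $-\sigma_{0}$), so the analytic content matches.

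There is, however, one concrete slip in your final step. You conclude $\bigl|g_{N'}^{(-m_{j})}(x_{k})-P(x_{k})\bigr|\gtrsim x_{k}^{\alpha+m_{j}}$, i.e.\ $g_{N'}=\Omega(x^{\alpha})$ $(\mathrm{C},m_{j})$, and declare this to contradict the hypothesis $g_{N'}=O(x^{\alpha})$ $(\mathrm{C},m_{j})$. It does not: in this paper $\Omega$ is defined as the negation of little~$o$, not of big~$O$, and a quantity that is both $\asymp x^{\alpha+m_{j}}$ from below along a subsequence and $O(x^{\alpha+m_{j}})$ everywhere is perfectly consistent. The threshold $|\alpha_{n}|>\alpha+1-\sigma_{0}$ you impose on $\nu$ is exactly the borderline case of your lemma and leaves no room. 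The repair is immediate and already contained in your lemma: since $\beta\geq\alpha_{N'}$ can be made as large as you please by enlarging $\nu$ (this is where $|\alpha_{n}|\to\infty$ enters, precisely as $\alpha_{N_{k}}\to\infty$ enters in the paper), apply the lemma with $\alpha$ replaced by $\alpha+1$ (requiring, say, $|\alpha_{n}|>\alpha+2-\sigma_{0}$ for $n\geq\nu$) to obtain the lower bound $\Omega(x^{\alpha+m_{j}+1})$, which genuinely negates $O(x^{\alpha+m_{j}})$. With that adjustment your argument is complete and equivalent to the paper's; the paper sidesteps the issue entirely by never passing through an intermediate growth rate and instead deducing the convergence of $\int_{1}^{\infty}t^{-\sigma_{0}}\,\mathrm{d}F$ straight from the $O$-hypothesis.
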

\begin{proof} 
We only need to show the converse. We assume $\displaystyle\lim_{n\rightarrow\infty}\Re e\:
\alpha_{n}=\infty$, the proofs in remaining two cases are analogous. We may rearrange the sequence in increasing order $\alpha_{0}<\alpha_{1}<\dots<\alpha_{n}\to\infty$. We prove that \eqref{measeq4} implies \eqref{P2}, for an extension sequence of the $x^{\alpha_{n}}\mathrm{d}F(x)$ to $\mathcal{S}'_{+}$, by contraposition. Note that the $O$-bound \eqref{measeq6} holds as $x\to0^{+}$ with $-M=\alpha_0$;  so, if $\eqref{P2}$ fails, there are $m$, $\alpha>0$, and an infinite sequence of indices $N_{k}$ such that   
$$
\int_{1}^{x}\left(t^{\alpha_{N_{k}}}+\sum_{n=0}^{N_{k}-1}b_{n,k}t^{\alpha_{n}}\right)\left(1-\frac{t}{x}\right)^{m}\mathrm{d}F(t)=O(x^{\alpha}), \quad x\geq1\,.
$$
for some constants $b_{n,k}$. We have $\sum_{n=0}^{N_{k}-1}b_{n,k}t^{\alpha_{n}}=o(t^{\alpha_{N_{k}}})$. Therefore, 
$$
G(t)=\int_{1}^{x}t^{\alpha_{N_{k}}}\mathrm{d}F(t)\leq 2^{m}
\int_{1}^{2x}\left(1-\frac{t}{2x}\right)^{m}t^{\alpha_{N_{k}}}\mathrm{d}F(t)=O(x^{\alpha}), \quad x\geq1\,.
$$
Integrating by parts,
$$
\int_{1}^{x}t^{\alpha_{N_{k}}-\alpha-1}\mathrm{d}F(t)= \frac{G(x)}{x^{\alpha+1}}+(\alpha+1)\int_{1}^{x}\frac{G(t)}{t^{\alpha+2}}\mathrm{d}t=O(1)\:,
$$
which implies that (\ref{measeq4}) cannot hold because $\alpha_{N_{k}}\to\infty$.
\end{proof}

If the measure $\mathrm{d}F$ vanishes on $(0,\lambda_{0})$, then \eqref{measeq5} cannot hold, which excludes the cases (II) and (III) from Proposition \ref{meas prop 1} and Theorem \ref{meas thm 1}. Also, the abscissa of absolute convergence of the Mellin transform of $\int_{0}^{\infty}x^{-s}\mathrm{d}F(x)$, denoted as $\sigma_{a}$, is the infimum of those $\sigma_{1}$ for which (\ref{measeq3}) holds. Under our assumption (\ref{measeq3}), $\sigma_{a}<\infty$, but it may be equal to $-\infty$.

\begin{corollary}
\label{meas cor 1} 
Let $F$ be a non-decreasing function of at most polynomial growth that vanish on  $(-\infty,\lambda)$ for some $\lambda>0$, and let $\{\alpha_{n}\}_{n=0}^{\infty}$ be a sequence of distinct real numbers. Then, every moment problem \eqref{measeq1} has solutions $\phi\in\mathcal{S}(0,\infty)$ if and only if $\alpha_{n}\to\infty$ and the abscissa of convergence of the Mellin transform of $\mathrm{d}F$ is finite, that is, $\sigma_{a}>-\infty$.

\end{corollary}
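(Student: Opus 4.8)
The plan is to read the corollary off from Theorem~\ref{meas thm 1} by specializing it to measures whose support is bounded away from the origin. I would begin by checking that the hypotheses of Theorem~\ref{meas thm 1} are in force. Since $F$ is non-decreasing, $\mathrm{d}F$ is a non-negative measure, and because $F$ vanishes on $(-\infty,\lambda)$ its support lies in $[\lambda,\infty)$ with $\lambda>0$. Condition \eqref{measeq2} is then immediate, as $\int_{0}^{1}x^{\sigma_{1}}\,|\mathrm{d}F|(x)$ reduces to an integral over the compact set $[\lambda,1]$ (void when $\lambda>1$) and is therefore finite; condition \eqref{measeq3} holds for every $\sigma_{1}>M$, where $F(x)=O(x^{M})$, by an integration by parts. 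In particular $\mathrm{d}F\in\mathcal{S}'(0,\infty)$ and the Mellin abscissa satisfies $\sigma_{a}\leq M<\infty$, so that in this setting ``$\sigma_{a}$ finite'' means exactly ``$\sigma_{a}>-\infty$''.

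The first reduction is to note that the support condition eliminates the alternatives (II) and (III). For every $\sigma_{0}>0$ the integral $\int_{0}^{1}x^{\sigma_{0}}\,|\mathrm{d}F|(x)$ is again taken over $[\lambda,1]$ and is hence finite, so \eqref{measeq5} can never hold. Consequently only alternative (I) of Theorem~\ref{meas thm 1} is available, which forces $\lim_{n\to\infty}\alpha_{n}=\infty$ whenever the problem is solvable; conversely, the hypothesis $\alpha_{n}\to\infty$ places us in alternative (I) from the start and in particular yields $|\alpha_{n}|\to\infty$. This is exactly the remark made just before the corollary.

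The second reduction, which I expect to be the only delicate point, is to identify condition \eqref{measeq4} with $\sigma_{a}>-\infty$. The map $\sigma\mapsto\int_{1}^{\infty}x^{-\sigma}\,\mathrm{d}F(x)$ is non-increasing, finite for $\sigma>\sigma_{a}$ and infinite for $\sigma<\sigma_{a}$; hence the existence of an exponent for which this integral diverges is equivalent to the convergence set being a proper subset of $\mathbb{R}$, that is, to $\sigma_{a}>-\infty$. This is the genuine content of \eqref{measeq4} as it is used in the proofs of Proposition~\ref{meas prop 1} and Theorem~\ref{meas thm 1}, where what is actually forced is the divergence $\int_{1}^{\infty}x^{\beta}\,\mathrm{d}F(x)=\infty$ for a suitable exponent $\beta$ (equivalently $\sigma_{a}\neq-\infty$); I would make explicit that, under polynomial growth, the positivity normalization of the exponent plays no role beyond locating $\sigma_{a}$. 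Combining the two reductions with Theorem~\ref{meas thm 1} then gives that \eqref{measeq1} is solvable in $\mathcal{S}(0,\infty)$ for every right-hand side if and only if $\alpha_{n}\to\infty$ and $\sigma_{a}>-\infty$, which is the assertion of the corollary.
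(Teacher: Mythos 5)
Your argument is correct and is essentially the paper's own derivation: the corollary is read off from Theorem \ref{meas thm 1} by checking that \eqref{measeq2}--\eqref{measeq3} hold, observing that the vanishing of $\mathrm{d}F$ near the origin makes \eqref{measeq5} impossible (hence excludes cases (II) and (III) and forces $\alpha_n\to\infty$), and translating \eqref{measeq4} into $\sigma_a>-\infty$. You also correctly flag and resolve the one genuine subtlety, namely that the literal normalization $\sigma_0>0$ in \eqref{measeq4} would read as $\sigma_a>0$, whereas what the proofs of Proposition \ref{meas prop 1} and Theorem \ref{meas thm 1} actually use is divergence of $\int_1^\infty x^{\beta}\,\mathrm{d}F(x)$ at some real exponent $\beta$, which is exactly $\sigma_a>-\infty$ as the corollary asserts.
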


\begin{example} Let $\{\lambda_{k}\}_{k=1}^{\infty}$ be a non-decreasing sequence of positive real numbers and let $\{c_k\}_{k=0}^{\infty}$ be a non-negative sequence. According to Corollary \ref{meas cor 1}, every arbitrary moment problem 
$$
a_n=\sum_{k=1}^{\infty} c_{k}\lambda_{k}^{\alpha_{n}}\phi(k), \quad n\in\mathbb{N}.
$$ 
has solutions $\phi\in\mathcal{S}(0,\infty)$ if and only if $\alpha_{n}\to\infty$ and the Dirichlet series $F(s)=\sum_{k=1}^{\infty} c_{k}\lambda^{-s}$ has finite abscissa of convergence. In particular, if $\alpha_{n}\to\infty$, moment problems such as
$$
a_n=\sum_{k=1}^{\infty} \phi(k)k^{\alpha_{n}}, \quad n\in\mathbb{N}\:, \quad \mbox{and }\quad  a_n=\sum_{p \text{ prime}} \phi(p)p^{\alpha_{n}}, \quad n\in\mathbb{N}\:,
$$ 
are always solvable in $\mathcal{S}(0,\infty)$. 
\end{example}

\section{Density of the set of solutions of moment
problems\label{Section: Density of the set of solutions of moment problems}}

In order to study vector valued moment problems, it is convenient to consider
first several results on the density of some linear manifolds in a general
topological vector space.

Let $E$ be a locally convex topological vector space. It is well known that a
linear functional $f:E\longrightarrow\mathbb{C}$ is continuous if and only if
the linear subspace $\ker f=\{x\in E:f\left(  x\right)  =0\}$ is closed, or,
equivalently, $f$ is discontinuous if and only if $\ker f$ is dense in $E.$ We
would like to consider the corresponding situation when not one but several
linear functionals are given. In the following we shall employ the notation
$\ker\left(  f_{1},\ldots,f_{n}\right)  =\bigcap_{k=1}^{n}\ker f_{k}
.$

\begin{definition}
\label{Definition: Completely discontinuous}Let $f_{1},\ldots,f_{n}$ be linear
functionals on the locally convex topological vector space $E.$ We say that
they are completely discontinuous if the only linear combination $\sum
_{k=1}^{n}c_{k}f_{k}$ that is continuous is the one with $c_{k}=0$ for all
$k.$\smallskip
\end{definition}

If we denote by $\pi$ the projection from the \emph{algebraic} dual space
$E_{\text{alg}}^{\prime}$ onto $E_{\text{alg}}^{\prime}/E^{\prime},$ then
$f_{1},\ldots,f_{n}$ are completely discontinuous if and only if $\pi(f_{1}),$
$\ldots,$ $\pi(f_{n})$ are linearly independent.

\begin{proposition}
\label{Prop. V1}Let $f_{1},\ldots,f_{n}$ be $n$ linearly independent linear
functionals on the locally convex topological vector space $E.$ Let $k$ be the
dimension of the vector subspace $G$ of $\mathbb{C}^{n}$ formed by those
vectors $\left(  c_{1},\ldots,c_{n}\right)  $ such that $\sum_{i=1}^{n}
c_{i}f_{i}$ is continuous. Then
\begin{equation*}
\operatorname*{codim}\nolimits_{E}\overline{\ker\left(  f_{1},\ldots
,f_{n}\right)  }=k\,. 
\end{equation*}
In particular, $\ker\left(  f_{1},\ldots,f_{n}\right)  $ is dense in $E$ if
and only if $f_{1},\ldots,f_{n}$ are completely discontinuous.
\end{proposition}

\begin{proof}
Indeed, denote by $m$ the codimension of $\overline{\ker\left(  f_{1}
,\ldots,f_{n}\right)  }$ in $E.$ We shall first show that $m\geq k.$ This is
obvious, of course, if $k=0,$ so let us suppose that $k>0.$ Then if
$\mathbf{c}_{j}=\left(  c_{j,i}\right)  _{i=1}^{n},$ $1\leq j\leq k,$\ are a
basis of $G,$ then the $k$ functionals $g_{j}=\sum_{i=1}^{n}c_{j,i}f_{i},$
$1\leq j\leq k$ are continuous, and they are also linearly independent,
because the $f_{j}$'s are. Since $\ker\left(  f_{1},\ldots,f_{n}\right)
\subset\ker\left(  g_{1},\ldots,g_{k}\right)  ,$ and the latter space is
closed, we obtain that $\overline{\ker\left(  f_{1},\ldots,f_{n}\right)
}\subset\ker\left(  g_{1},\ldots,g_{k}\right)  ,$ and thus $m\geq k.$

To show that $m\leq k$ we may assume that $m>0.$ Since $\overline{\ker\left(
f_{1},\ldots,f_{n}\right)  }$ is a closed subspace of codimension $m,$ we can
find $m$ linearly independent continuous functionals $g_{1},\ldots,g_{m}$ such
that $\overline{\ker\left(  f_{1},\ldots,f_{n}\right)  }=\ker\left(
g_{1},\ldots,g_{m}\right)  .$ The fact that $\ker\left(  f_{1},\ldots
,f_{n}\right)  \subset\ker g_{j}$ for any $j$ yields that $g_{j}$ is a linear
combination of $f_{1},\ldots,f_{n},$ say $g_{j}=\sum_{i=1}^{n}c_{j,i}f_{i};$
then the vectors $\mathbf{c}_{j}=\left(  c_{j,i}\right)  _{i=1}^{n}$ for
$1\leq j\leq m$ are linearly independent in $G,$ and thus $m\leq k.$
\end{proof}

Observe, furthermore, that if $f_{1},\ldots,f_{n}$ are linearly independent,
then the dimension of the vector space generated by $\pi(f_{1}),\ldots
,\pi(f_{n})$ is precisely $n-k.$

If $f_{1},\ldots,f_{n}$ are linearly independent, then the map from $E$ to
$\mathbb{C}^{n}$ given by $x\mapsto\left(  \left\langle f_{j},x\right\rangle
\right)  _{j=1}^{n}$ is surjective. Therefore we obtain the following result
on \emph{finite} moment problems.

\begin{corollary}
\label{Cor. V2}If $f_{1},\ldots,f_{n}$ are completely discontinuous, then for
any vector $\left(  a_{j}\right)  _{j=1}^{n}$ of $\mathbb{C}^{n},$ the set of
solutions of the moment problem
\begin{equation*}
\left\langle f_{j},x\right\rangle =a_{j}\,,\ \ \ \ 1\leq j\leq n\,,
\end{equation*}
is dense in $E.$
\end{corollary}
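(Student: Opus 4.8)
The plan is to combine Proposition~\ref{Prop. V1} with the surjectivity observation stated immediately before the corollary, so that density of the solution set reduces to the density of a single coset of the kernel $\ker(f_{1},\ldots,f_{n})$. First I would record the relevant consequence of Proposition~\ref{Prop. V1}: since $f_{1},\ldots,f_{n}$ are completely discontinuous, they are in particular linearly independent (a zero linear combination is trivially continuous, so complete discontinuity forces $c_{k}=0$), and the proposition then yields that $\ker(f_{1},\ldots,f_{n})$ is dense in $E$.

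Next I would use the surjectivity of the evaluation map $T:E\to\mathbb{C}^{n}$, $T(x)=(\langle f_{j},x\rangle)_{j=1}^{n}$, guaranteed by linear independence as noted just before the corollary. Surjectivity means the moment problem $\langle f_{j},x\rangle=a_{j}$, $1\le j\le n$, has at least one solution $x_{0}\in E$. The full solution set is precisely the affine subspace $x_{0}+\ker(f_{1},\ldots,f_{n})$, because any two solutions differ by an element annihilated by all the $f_{j}$.

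It then remains to observe that a translate of a dense subset is dense: the translation map $y\mapsto x_{0}+y$ is a homeomorphism of $E$ (it is continuous with continuous inverse $y\mapsto y-x_{0}$, as $E$ is a topological vector space), and homeomorphisms carry dense sets to dense sets. Since $\ker(f_{1},\ldots,f_{n})$ is dense, its translate $x_{0}+\ker(f_{1},\ldots,f_{n})$ is dense as well, which is exactly the assertion that the solution set is dense in $E$.

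There is no real obstacle here; the corollary is a direct packaging of the already-proved proposition together with the elementary fact that solution sets of inhomogeneous linear systems are cosets of the associated homogeneous kernel. The only point requiring a moment's care is to make explicit that the two solutions differ by a kernel element and that translation is a homeomorphism, both of which are immediate in a topological vector space.
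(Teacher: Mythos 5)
Your proposal is correct and matches the argument the paper intends (the corollary is stated without an explicit proof, but the surrounding text — Proposition \ref{Prop. V1} giving density of the kernel and the preceding remark on surjectivity of $x\mapsto(\langle f_{j},x\rangle)_{j=1}^{n}$ — points to exactly this coset-plus-translation reasoning). You correctly fill in the minor details, namely that complete discontinuity implies linear independence and that translation is a homeomorphism.
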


Sometimes it is necessary to employ the following simple result.

\begin{lemma}
\label{Lemma V.1}Let $E$ be a locally convex topological vector space and let
$F$ be a closed subspace of finite codimension. Then the linear functionals
$f_{1},\ldots,f_{n}$ are completely discontinuous in $E$ if and only if their
restrictions to $F$ are.
\end{lemma}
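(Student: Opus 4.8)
The plan is to prove Lemma \ref{Lemma V.1} by leveraging Proposition \ref{Prop. V1}, which characterizes complete discontinuity via density of the joint kernel. The key observation is that complete discontinuity is, via Proposition \ref{Prop. V1}, equivalent to the statement that the only continuous linear combination of $f_{1},\ldots,f_{n}$ is the trivial one. So the heart of the matter is to show that a linear combination $\sum_{k=1}^{n}c_{k}f_{k}$ is continuous \emph{on $E$} if and only if its restriction to $F$ is continuous \emph{on $F$}. Once this equivalence of continuity is established, the statement about complete discontinuity follows immediately by comparing the two corresponding subspaces $G\subseteq\mathbb{C}^{n}$ of admissible coefficient vectors and checking that they coincide, so that $k=0$ holds for $E$ exactly when it holds for $F$.

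First I would observe that one direction is trivial: if $g=\sum_{k=1}^{n}c_{k}f_{k}$ is continuous on $E$, then its restriction $g_{|F}$ is continuous on $F$, since $F$ carries the subspace topology and restriction of a continuous map is continuous. The substance is in the converse. So suppose $g_{|F}$ is continuous on $F$; I want to conclude that $g$ is continuous on $E$. Here is where the finite codimension and closedness of $F$ enter. Since $F$ is closed of finite codimension $d$, I can write $E=F\oplus V$ where $V$ is a finite-dimensional complement, and the associated projection $E\to V$ along $F$ is continuous (a closed subspace of finite codimension is topologically complemented, as the quotient $E/F$ is finite-dimensional Hausdorff and hence the quotient map splits continuously). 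Then I can write $g=g\circ p_{F}+g\circ p_{V}$, where $p_{F}$ and $p_{V}$ are the two continuous projections. The term $g\circ p_{V}$ is continuous because it factors through the finite-dimensional space $V$, on which every linear functional is automatically continuous; the term $g\circ p_{F}$ is continuous because $p_{F}$ maps continuously into $F$ and $g_{|F}$ is continuous by hypothesis. Hence $g$ is continuous on $E$.

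With the continuity equivalence in hand, I would finish as follows. By definition, $f_{1},\ldots,f_{n}$ are completely discontinuous in $E$ precisely when the only coefficient vector $(c_{1},\ldots,c_{n})$ making $\sum_{k}c_{k}f_{k}$ continuous is the zero vector; likewise for the restrictions on $F$. The equivalence just proved shows that the set of coefficient vectors yielding a continuous combination is \emph{the same} in both cases. Therefore complete discontinuity on $E$ holds if and only if it holds on $F$.

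The main obstacle I anticipate is justifying that $F$, being closed of finite codimension, admits a \emph{topological} complement, i.e.\ that the projection onto the finite-dimensional complement is continuous. This is where closedness is essential: it guarantees that $E/F$ is a Hausdorff finite-dimensional topological vector space, hence isomorphic to $\mathbb{C}^{d}$ with its usual topology, so that linear functionals on $E/F$ (and consequently the projections) are continuous. Without the closedness hypothesis the complement need not be topological and the converse direction could fail, so I would be careful to invoke closedness at exactly this point rather than treating the splitting as a purely algebraic fact.
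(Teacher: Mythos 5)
Your proof is correct. The paper states Lemma \ref{Lemma V.1} without proof (calling it a ``simple result''), and your argument is the intended one: the only nontrivial point is that a closed subspace of finite codimension admits a continuous projection, which follows because $E/F$ is a finite-dimensional Hausdorff space so the quotient map splits continuously, and you invoke closedness at exactly the right place. Note also that your continuity equivalence automatically handles the borderline case where $\sum_{k}c_{k}f_{k}$ vanishes identically on $F$ with some $c_{k}\neq 0$: such a combination factors through $E/F$ and is therefore continuous on $E$, so neither side of the equivalence can be completely discontinuous, as required.
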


In general the intersection of an infinite sequence of dense linear manifolds
does not have to be dense, so that the result of the Corollary \ref{Cor. V2}
does not hold for infinite moment problems. We shall show that when
$E=\mathcal{S}\left(  0,\infty\right)  $ and the linear functionals are
Ces\`{a}ro admissible then the density of the set of solutions holds in many
cases, but before we do this we give an example when the set of solutions is
not dense.

\begin{example}
\label{Example V.1}Let $E$ be the normed space formed by the polynomials in
one variable, with norm $\left\Vert p\right\Vert =\max_{\left\vert
t\right\vert \leq1}p\left(  t\right)  .$ Consider the sequence of functionals
$\{f_{k}\}_{k=1}^{\infty},$ $f_{k}\left(  t\right)  =\delta^{\left(  k\right)
}\left(  t\right)  .$ For each $n,$ $f_{1},\ldots,f_{n}$ are completely
discontinuous, and as the corollary predicts, the set $S_{n}=\left\{  p\in
E:\left\langle f_{j},p\right\rangle =a_{j}\,,\ 1\leq j\leq n\right\}  $ is
dense in $E$ for any constants $a_{1},\ldots,a_{n}.$ However, for many
infinite sequences $\{a_{k}\}_{k=1}^{\infty}$ we have that the set $\left\{  p\in
E:\left\langle f_{j},p\right\rangle =a_{j}\,,\ j\geq1\right\}  $ is empty, and
when not empty, it is an affine subspace of dimension $1.$ Thus $\bigcap
_{n=1}^{\infty}S_{n}$ is never dense in $E.$\smallskip
\end{example}

Suppose now that $E$ is a Fr\'{e}chet space whose topology is given by the
basis of increasing seminorms $\left\{  p_{k}\right\}  _{k=1}^{\infty}.$
Consider a family $\{f_{k}\}_{k=1}^{\infty}$ of continuous linear functionals.
We shall say that the sequence of blocks $\left\{  \{f_{j}\}_{j=N_{k}}
^{N_{k}+1}\right\}  _{k=0}^{\infty}$ is \emph{strictly admissible} with
respect to the sequence of seminorms $\left\{  p_{k}\right\}  _{k=1}^{\infty}$
if $\{N_{k}\}_{k=0}^{\infty}$\ is an increasing sequence of integers with
$N_{0}=1$ such that the following two conditions are satisfied:

\begin{enumerate}
\item $\left\{  f_{1},\ldots,f_{N_{k}}\right\}  $ are continuous with respect
to $p_{k};$

\item $\left\{  f_{N_{k}+1},f_{N_{k}+2},\ldots\right\}  $ are completely
discontinuous with respect to $p_{k}.$ \smallskip
\end{enumerate}

We can then prove the density of the set of solutions of certain infinite
moment problems.

\begin{proposition}
\label{Prpp. V2}Let $\left\{  \{f_{j}\}_{j=N_{k}}^{N_{k}+1}\right\}
_{k=0}^{\infty}$ be strictly admissible with respect to $\left\{
p_{k}\right\}  _{k=1}^{\infty}.$ Let $\{a_{k}\}_{k=N_{1}+1}^{\infty}$ be an
arbitrary sequence of complex numbers. Then the set of solutions of the moment
problem
\begin{equation}
\left\langle f_{j},x\right\rangle =a_{j}\,,\ \ \ \ j\geq N_{1}+1\,,
\label{V.3}
\end{equation}
is dense in the seminormed space $\left(  E,p_{1}\right)  .$ Actually for any
$\varepsilon>0$ and any $x_{0}\in E$ there is a solution of (\ref{V.3}) such
that
\begin{equation*}
p_{1}\left(  x-x_{0}\right)  \leq\varepsilon\,,\ \ \ \ \left\langle
f_{j},x\right\rangle =\left\langle f_{j},x_{0}\right\rangle \,,\ 1\leq j\leq
N_{1}\,.
\end{equation*}

\end{proposition}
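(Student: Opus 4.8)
The plan is to build the desired solution as the sum of a convergent series $x=x_0+\sum_{k\ge 1}y_k$, correcting the moment equations one block at a time and using Corollary \ref{Cor. V2} together with Lemma \ref{Lemma V.1} at each step, while exploiting the monotonicity of the seminorms $\{p_k\}$ to force convergence in $E$. First I would fix the given $x_0\in E$ and $\varepsilon>0$, write $B_k=\{N_k+1,\ldots,N_{k+1}\}$ for the $k$-th block of indices, and choose positive numbers $\delta_k$ with $\sum_{k\ge 1}\delta_k\le\varepsilon$. The induction produces partial sums $x_k=x_0+y_1+\cdots+y_k$ that satisfy $\langle f_j,x_k\rangle=a_j$ for all $N_1<j\le N_{k+1}$, satisfy $\langle f_j,x_k\rangle=\langle f_j,x_0\rangle$ for $1\le j\le N_1$, and obey $p_k(y_k)<\delta_k$.

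To pass from $x_{k-1}$ to $x_k$ I would consider the subspace $F_k=\ker(f_1,\ldots,f_{N_k})$, which is $p_k$-closed of finite codimension because condition $(1)$ of strict admissibility makes $f_1,\ldots,f_{N_k}$ continuous with respect to $p_k$. By condition $(2)$ the functionals $f_{N_k+1},f_{N_k+2},\ldots$ are completely discontinuous with respect to $p_k$, hence so are the finitely many functionals $\{f_j\}_{j\in B_k}$ (a nontrivial $p_k$-continuous combination of the subfamily would be one of the whole family); by Lemma \ref{Lemma V.1} their restrictions to $F_k$ stay completely discontinuous in $(F_k,p_k)$. Corollary \ref{Cor. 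V2}, applied inside $(F_k,p_k)$, then shows that the affine solution set of the finite moment problem $\langle f_j,y\rangle=a_j-\langle f_j,x_{k-1}\rangle$ ($j\in B_k$, $y\in F_k$) is dense, so it meets the $p_k$-ball of radius $\delta_k$ about the origin, which supplies the correction $y_k$. Since $y_k\in F_k=\ker(f_1,\ldots,f_{N_k})$, adding it leaves every equation of index $\le N_k$ untouched while fixing those of the current block.

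Finally I would check convergence and the limiting equations. Because the seminorms are increasing, $p_m(y_k)\le p_k(y_k)<\delta_k$ for $m\le k$, so for each fixed $m$ the tail $\sum_{k\ge m}p_m(y_k)$ is finite; thus $\sum_k y_k$ is Cauchy in every seminorm and converges to some $x\in E$ with $p_1(x-x_0)\le\sum_k\delta_k\le\varepsilon$. For each fixed $j$ one has $y_k\in\ker f_j$ as soon as $N_k\ge j$, so only finitely many corrections affect $\langle f_j,\cdot\rangle$; by continuity of $f_j$ this yields $\langle f_j,x\rangle=a_j$ for $j>N_1$ and $\langle f_j,x\rangle=\langle f_j,x_0\rangle$ for $1\le j\le N_1$, which is exactly the assertion.

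The only genuine obstacle is orchestrating the two roles of the seminorm $p_k$ at step $k$: it must simultaneously make the protected functionals $f_1,\ldots,f_{N_k}$ continuous, so that $F_k$ is closed of finite codimension and Lemma \ref{Lemma V.1} applies, and keep the new functionals completely discontinuous, so that Corollary \ref{Cor. V2} delivers arbitrarily small corrections. Strict admissibility is precisely the hypothesis guaranteeing both, and the increasing nature of the $\{p_k\}$ is what converts step-wise smallness into convergence of the series in $E$.
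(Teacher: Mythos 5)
Your proposal is correct and follows essentially the same route as the paper: a block-by-block recursive construction of corrections $y_k$ lying in $\ker(f_1,\dots,f_{N_k})$, obtained from Corollary \ref{Cor. V2} and Lemma \ref{Lemma V.1} with $p_k(y_k)$ small, followed by summing the resulting series, whose convergence in $E$ is forced by the monotonicity of the seminorms exactly as in the paper. The only difference is presentational (you name the subspace $F_k$ explicitly and phrase the application of Corollary \ref{Cor. V2} inside $(F_k,p_k)$), so no further comment is needed.
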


\begin{proof}
Let $\{\varepsilon_{k}\}_{k=1}^{\infty}$ be a sequence with $\varepsilon
_{k}>0$ for all $k$ and with $\sum_{k=1}^{\infty}\varepsilon_{k}=\varepsilon.$
Considering the set $f_{N_{1}+1},\ldots,f_{N_{2}},$ which is completely
discontinuous with respect to $p_{1},$ the Corollary \ref{Cor. V2} and the
Lemma \ref{Lemma V.1}\ show the existence of $x_{1}$ such that $p_{1}\left(
x_{1}\right)  <\varepsilon_{1},$ $\left\langle f_{j},x_{1}\right\rangle =0$
for $1\leq j\leq N_{1},$ while $\left\langle f_{j},x_{1}\right\rangle
=a_{j}-\left\langle f_{j},x_{0}\right\rangle $ for $N_{1}+1\leq j\leq N_{2}.$
Proceeding in a recursive fashion, for each $k\geq2$ we can find $x_{k}$ such
that $p_{k}\left(  x_{k}\right)  <\varepsilon_{k},$ $\left\langle f_{j}
,x_{k}\right\rangle =0$ for $1\leq j\leq N_{k},$ while $\left\langle
f_{j},x_{k}\right\rangle =a_{j}-\sum_{i=0}^{k-1}\left\langle f_{j}
,x_{i}\right\rangle $ for $N_{k}+1\leq j\leq N_{k+1}.$

The series $\sum_{k=0}^{\infty}x_{k}$ converges in $E$ since for any $q$ we
have%
\[
\sum_{k=0}^{\infty}p_{q}\left(  x_{k}\right)  \leq\sum_{k=0}^{q-1}p_{q}\left(
x_{k}\right)  +\sum_{k=q}^{\infty}p_{q}\left(  x_{k}\right)  \leq\sum
_{k=0}^{q-1}p_{q}\left(  x_{k}\right)  +\sum_{k=q}^{\infty}\varepsilon
_{k}<\infty\,.
\]
Let $x=\sum_{k=0}^{\infty}x_{k}.$ Then
\[
p_{1}\left(  x-x_{0}\right)  \leq\sum_{k=1}^{\infty}p_{1}\left(  x_{k}\right)
\leq\sum_{k=1}^{\infty}\varepsilon_{k}=\varepsilon\,,
\]
while by continuity $\left\langle f_{j},x\right\rangle =\sum_{i=0}^{\infty
}\left\langle f_{j},x_{i}\right\rangle ,$ which is actually a finite sum; this
gives gives $\left\langle f_{j},x\right\rangle =\left\langle f_{j}
,x_{0}\right\rangle \,,$\ for $1\leq j\leq N_{1}$ and if $k\geq1,$
\[
\left\langle f_{j},x\right\rangle =\sum_{i=0}^{k}\left\langle f_{j}
,x_{i}\right\rangle =\left\langle f_{j},x_{k}\right\rangle +\sum_{i=0}
^{k-1}\left\langle f_{j},x_{i}\right\rangle =a_{j}\,,
\]
for $N_{k}+1\leq j\leq N_{k+1},$ as required.
\end{proof}

Let us now go back to moment problems in the space $E=\mathcal{S}\left(
0,\infty\right)  .$ If a sequence of functionals $\{f_{k}\}_{k=0}^{\infty}$ is
the restriction to $\left(  0,\infty\right)  $ of a sequence that is
Ces\`{a}ro admissible, and $\left\{  p_{k}\right\}  _{k=1}^{\infty}$ is an
increasing sequence of continuous seminorms of $E$ that gives the topology, in
general it is not possible to arrange $\{f_{k}\}_{k=0}^{\infty}$ in blocks to
obtain strict admissibility. However, we can construct sequences $\left\{
\widetilde{p}_{k}\right\}  _{k=1}^{\infty},$ $\{g_{k}\}_{k=0}^{\infty},$ and
$\{N_{k}\}_{k=0}^{\infty}$\ such that

\begin{enumerate}
\item [(\textbf{1})]$\left\{  \widetilde{p}_{k}\right\}  _{k=1}^{\infty}$ is also an
increasing sequence of continuous seminorms that gives the topology of $E$;

\item [(\textbf{2})]$\left\{  \{g_{j}\}_{j=N_{k}}^{N_{k}+1}\right\}  _{k=0}^{\infty}$ is
strictly admissible with respect to $\left\{  \widetilde{p}_{k}\right\}
_{k=1}^{\infty};$

\item [(\textbf{3})] there is a linear bijective map $T$ from $\mathbb{C}^{\mathbb{N}}$ to
itself\footnote{The map $T$ is actually bicontinuous with respect to the
topology of pointwise convergence in $\mathbb{C}^{\mathbb{N}}.$} such that
$T\left(  \left\{  f_{j}\right\}  _{j=0}^{\infty}\right)  $ $=\left\{
g_{j}\right\}  _{j=0}^{\infty}.$ Actually there are increasing sequences
$m_{0}=0<m_{1}<m_{2}<\cdots,$ such that the vector $\left(  g_{j}\right)
_{j=m_{k}+1}^{m_{k+1}}$ is obtained by multiplying $\left(  f_{j}\right)
_{j=m_{k}+1}^{m_{k+1}}$ by an invertible matrix.\smallskip
\end{enumerate}

The construction is as follows. Naturally we put $N_{0}=1.$ Next, take
$\widetilde{p}_{1}=p_{1}.$ Since $\{f_{k}\}_{k=0}^{\infty}$ is Ces\`{a}ro
admissible, we can find integers $n_{1}$ and $l_{1},$ with $n_{1}\leq l_{1}$
such that $f_{0},\ldots,f_{n_{1}}$ are continuous with respect to
$\widetilde{p}_{1}$ while $f_{l_{1}+1},$ $f_{l_{1}+2},$ $f_{l_{1}+3},$
$\ldots$ are completely discontinuous with respect to $\widetilde{p}_{1};$ we
can take them so that $n_{1}$ is the maximum while $l_{1}$ is the minimum with
this property.\ If $l_{1}=n_{1}$ we take $N_{1}=n_{1}.$ If $l_{1}=n_{1}+q,$
$q\geq1,$ we can find linear combinations
\begin{equation*}
g_{n_{1}+j}=\sum_{i=1}^{q}\alpha_{j,i}f_{n_{1}+i}+\sum_{k=n_{1}+q+1}^{m_{1}
}\beta_{j,i}f_{k}\,, 
\end{equation*}
for $1\leq j\leq q$ such that the matrix $\left(  \alpha_{j,i}\right)
_{j,i=1}^{q}$ is invertible, and for some $N_{1},$ with $n_{1}<N_{1}\leq
l_{1}$ the functionals $f_{0},$ $\ldots,$ $f_{n_{1}},$ $g_{n_{1}+1},$ $\ldots$
$g_{N_{1}}$ are continuous with respect to $\widetilde{p}_{1}$ while
$g_{N_{1}+1},$ $\ldots,$ $g_{l_{1}},$ $f_{l_{1}+1},$ $f_{l_{1}+2},$
$f_{l_{1}+3},$ $\ldots$ are completely discontinuous with respect to
$\widetilde{p}_{1}.$ Put $g_{j}=f_{j}$ for $0\leq j\leq n_{1}$ and for
$l_{1}+1\leq j\leq m_{1}.$ The vector $\left(  g_{j}\right)  _{j=1}^{m_{1}}$
is obtained by multiplying $\left(  f_{j}\right)  _{j=1}^{m_{1}}$ by an
invertible matrix.

We then find a seminorm $\widetilde{p}_{2}$ among the $p_{k}$'s for $k\geq
r_{1}$ such that $g_{0},\ldots,g_{m_{1}}$ and $f_{m_{1}+1},\ldots,f_{n_{2}}$
are continuous with respect to $\widetilde{p}_{2}$ while $f_{l_{2}+1},$
$f_{l_{2}+2},$ $f_{l_{2}+3},$ $\ldots$ are completely discontinuous with
respect to $\widetilde{p}_{2}$ for some integers $n_{2}$ and $l_{2},$ with
$m_{1}+1\leq n_{2}\leq l_{2}.$ Then we employ the same procedure to find
$N_{2}$ and $m_{2}$ such that $n_{2}\leq N_{2}\leq l_{2}\leq m_{2}$ and linear
combinations $g_{j},$ $m_{1}+1\leq j\leq m_{2}$ of the $f_{k}$'s, $m_{1}+1\leq
k\leq m_{2},$ so that $\left(  g_{j}\right)  _{j=m_{1}+1}^{m_{2}}$ is obtained
by multiplying $\left(  f_{j}\right)  _{j=m_{1}+1}^{m_{2}}$ by an invertible
matrix, $g_{0},$ $\ldots$ $g_{N_{2}}$ are continuous with respect to
$\widetilde{p}_{2}$ while $g_{N_{2}+1},$ $g_{N_{2}+2},$ $g_{N_{2}+3},$
$\ldots$ are completely discontinuous with respect to $\widetilde{p}_{2}.$

We may then proceed inductively, constructing seminorms $\widetilde{p}_{k},$
integers $n_{k},$ $N_{k},$ $l_{k},$ and $m_{k}$ with $m_{k-1}+1\leq n_{k}\leq
N_{k}\leq l_{k}\leq m_{k},$ define $\left(  g_{j}\right)  _{j=m_{k-1}
+1}^{m_{k}}$ by multiplying $\left(  f_{j}\right)  _{j=m_{k-1}+1}^{m_{k}}$ by
an appropriate invertible matrix, so that $g_{1},$ $\ldots$ $g_{N_{k}}$ are
continuous with respect to $\widetilde{p}_{k}$ while $g_{N_{k}+1},$
$g_{N_{k}+2},$ $g_{N_{k}+3},$ $\ldots$ are completely discontinuous with
respect to $\widetilde{p}_{k}.$ The three conditions above are then clearly satisfied.

Notice that condition (\textbf{3}) in our construction implies that the moment
problems
\begin{equation*}
\left\langle f_{j},\phi\right\rangle =a_{j}\,,\ \ \ \ j\in\mathbb{N}\,,
\end{equation*}
have solution for \emph{all} sequences $\{a_{k}\}_{k=0}^{\infty}$ if and only
if the same is true of all the moment problems
\begin{equation*}
\left\langle g_{j},\phi\right\rangle =b_{j}\,,\ \ \ \ j\in\mathbb{N}\,,
\end{equation*}
for arbitrary sequences $\{b_{k}\}_{k=0}^{\infty}.$ We can also use the
density of the set of solutions of one moment problem to obtain the
corresponding density of the set of solutions of the other, as we explain in
precise terms in the next proposition.

\begin{proposition}
\label{Prop. V3}Let $\{f_{k}\}_{k=0}^{\infty}$ be a sequence of distributions
of the space $\mathcal{S}_{+}$ that satisfies conditions \eqref{P1} and \eqref{P2}. Let
$p$ be any continuous seminorm in $\mathcal{S}\left(  0,\infty\right)  .$ Then
there exists $m\in\mathbb{N}$ such that if $\{a_{k}\}_{k=m}^{\infty}$ is an
arbitrary sequence of complex numbers, then the set of solutions of the moment
problem
\begin{equation}
\left\langle f_{j},\phi\right\rangle =a_{j}\,,\ \ \ \ j\geq m\,, \label{V.8}
\end{equation}
is dense in the seminormed space $\left(  \mathcal{S}\left(  0,\infty\right)
,p\right)  .$ If the sequence $\{f_{k}\}_{k=0}^{\infty}$ is completely
discontinuous with respect to $p$ then we can take $m=0.$
\end{proposition}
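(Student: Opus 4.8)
The plan is to reduce the claim to the density result for strictly admissible block sequences, Proposition~\ref{Prpp. V2}, by feeding the sequence $\{f_{k}\}_{k=0}^{\infty}$ into the block construction carried out in the paragraphs immediately preceding this proposition. First I would choose an increasing sequence $\{p_{k}\}_{k=1}^{\infty}$ of continuous seminorms that gives the topology of $\mathcal{S}(0,\infty)$ with $p_{1}=p$; this is possible since $p$ is continuous (e.g.\ set $p_{1}=p$ and $p_{k}=p+r_{k-1}$ for $k\geq 2$, where $\{r_{j}\}$ is any increasing basis of seminorms). Condition \eqref{P2} makes $\{f_{k}\}_{k=0}^{\infty}$ Ces\`{a}ro admissible, while \eqref{P1} ensures that the restrictions of the $f_{k}$ to $\mathcal{S}(0,\infty)$ are linearly independent; thus the construction applies and yields seminorms $\{\widetilde{p}_{k}\}$ with $\widetilde{p}_{1}=p_{1}=p$, functionals $\{g_{k}\}$, block boundaries $m_{0}=0<m_{1}<m_{2}<\cdots$, indices $\{N_{k}\}$, and the block-diagonal invertible transformation $T$ of conditions~(\textbf{1})--(\textbf{3}), for which $\{\{g_{j}\}_{j=N_{k}}^{N_{k}+1}\}_{k=0}^{\infty}$ is strictly admissible with respect to $\{\widetilde{p}_{k}\}$.

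Next I would set $m=m_{1}+1$ and transfer density from the $g$-problem to the $f$-problem. By Proposition~\ref{Prpp. V2} applied to the $g$-sequence and the seminorms $\{\widetilde{p}_{k}\}$, the solution set of $\langle g_{j},\phi\rangle=b_{j}$, $j\geq N_{1}+1$, is dense in $(\mathcal{S}(0,\infty),\widetilde{p}_{1})=(\mathcal{S}(0,\infty),p)$ for every sequence $\{b_{j}\}_{j\geq N_{1}+1}$. Given an arbitrary $\{a_{j}\}_{j\geq m}$, the fact that $m=m_{1}+1$ is a block boundary lets me use condition~(\textbf{3}): on each block with indices $\geq m$ the vector $(g_{j})$ equals an invertible matrix times $(f_{j})$, so there is a sequence $\{b_{j}\}_{j\geq m}$, depending invertibly on $\{a_{j}\}_{j\geq m}$, with
\[
\{\phi:\langle f_{j},\phi\rangle=a_{j},\ j\geq m\}=\{\phi:\langle g_{j},\phi\rangle=b_{j},\ j\geq m\}.
\]
Since $N_{1}\leq m_{1}$, the right-hand side contains the solution set of $\langle g_{j},\phi\rangle=b_{j}$, $j\geq N_{1}+1$ (extending $\{b_{j}\}$ arbitrarily for $N_{1}+1\leq j\leq m_{1}$), which is dense; a superset of a dense set is dense, so the left-hand side is dense in $(\mathcal{S}(0,\infty),p)$, as required.

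For the final assertion, assume $\{f_{k}\}_{k=0}^{\infty}$ is completely discontinuous with respect to $p$. I would run the same scheme with $\widetilde{p}_{1}=p_{1}=p$, and observe that complete discontinuity with respect to $p$ means precisely that no nontrivial finite linear combination of the $f_{k}$ is $p$-continuous. Hence in the construction the leading $p$-continuous block is empty and one may take $N_{1}=0$ with no transformation on the first block; the recursion underlying Proposition~\ref{Prpp. V2} then prescribes the values $\langle f_{j},\phi\rangle=a_{j}$ starting already from $j=0$, yielding density of the full solution set in $(\mathcal{S}(0,\infty),p)$, so that $m=0$ is admissible.

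The step I expect to be the main obstacle is the index bookkeeping in the transfer: ensuring that $m$ is a genuine block boundary (so that, via the invertible maps of~(\textbf{3}), prescribing $\{a_{j}\}_{j\geq m}$ is equivalent to prescribing a $g$-tail $\{b_{j}\}_{j\geq m}$) and that $N_{1}+1\leq m$, so that Proposition~\ref{Prpp. V2} applies to a superset; the completely discontinuous case in turn rests on correctly reading off $N_{1}=0$ from an empty leading block, which is where the hypothesis enters essentially.
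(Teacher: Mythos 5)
Your proposal is correct and follows essentially the same route as the paper: both reduce to Proposition \ref{Prpp. V2} via the block construction with $\widetilde{p}_{1}=p$ and take $m$ at the first block boundary $m_{1}$. The only cosmetic difference is that the paper passes to the homogeneous problem ($a_{j}=0$) and then translates by a particular solution guaranteed by \eqref{P1} and \eqref{P2}, whereas you transfer the inhomogeneous data $\{a_{j}\}$ directly through the blockwise invertible matrices of condition (\textbf{3}); both handle the completely discontinuous case by noting the leading continuous block is empty.
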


\begin{proof}
Let $\left\{  p_{k}\right\}  _{k=1}^{\infty}$ be an increasing sequence of
continuous seminorms of $\mathcal{S}\left(  0,\infty\right)  $ that gives the
topology, with $p_{1}=p.$ We can then construct the sequence $\left\{
g_{j}\right\}  $ so that conditions (\textbf{1}), (\textbf{2}), and (\textbf{3}) are satisfied. Observe that
for any $k$ the set of solutions of the moment problem $\left\langle
f_{j},\phi\right\rangle =0,$ for $j\geq m_{k},$ is exactly the same as the set
of solutions of $\left\langle g_{j},\phi\right\rangle =0,$ for $j\geq m_{k},$
and since $\left\{  \{g_{j}\}_{j=N_{k}}^{N_{k}+1}\right\}  _{k=0}^{\infty}$ is
strictly admissible with respect to $\left\{  \widetilde{p}_{k}\right\}
_{k=1}^{\infty},$ Proposition \ref{Prpp. V2} gives that such set of solutions is dense in
$\mathcal{S}\left(  0,\infty\right)  $ with the topology given by the seminorm
$\widetilde{p}_{k}.$ Hence, if we take $m=m_{1}$ we obtain that the set of
solutions of $\left\langle f_{j},\phi\right\rangle =0$ for $j\geq m\,$ is
dense in $\left(  \mathcal{S}\left(  0,\infty\right)  ,p\right)  .$ Therefore,
if $\{a_{k}\}_{k=m}^{\infty}$ is an arbitrary sequence of complex numbers,\ by
translating by a particular solution of (\ref{V.8}) -- particular solution
that exists because \eqref{P1} and \eqref{P2} are satisfied -- we obtain that the set of
solutions of (\ref{V.8}) is likewise dense in $\left(  \mathcal{S}\left(
0,\infty\right)  ,p\right)  .$ That we can take $m=0$ if $\{f_{k}
\}_{k=0}^{\infty}$ is completely discontinuous with respect to $p$ should be
clear from our construction.
\end{proof}

The next related result, which will be needed in our analysis of the vector
moment problems, follows by the same arguments.

\begin{proposition}
\label{Prop. V4}Let $\{f_{k}\}_{k=0}^{\infty}$ be a sequence of distributions
of the space $\mathcal{S}_{+}$ that satisfies conditions \eqref{P1} and \eqref{P2}. Let
$p$ be any continuous seminorm in $\mathcal{S}\left(0,\infty\right).$ Then
there exists $m\in\mathbb{N}$ such that if $\{a_{k}\}_{k=m}^{\infty}$ is an
arbitrary sequence of complex numbers, then for each $\varepsilon>0$ the
moment problem
\begin{equation*}
\left\langle f_{j},\phi\right\rangle =a_{j}\,,\ \ \ \ j\geq
m\,,\ \ \ \ \left\langle f_{j},\phi\right\rangle =0\,,\ \ \ \ 0\leq j<m\,,
\end{equation*}
has a solution with $p\left(  \phi\right)  <\varepsilon.$
\end{proposition}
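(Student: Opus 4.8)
The plan is to derive Proposition~\ref{Prop. V4} from Proposition~\ref{Prop. V3} together with the construction of the sequences $\{g_{j}\}$, $\{N_{k}\}$, and $\{\widetilde{p}_{k}\}$ carried out before Proposition~\ref{Prop. V3}. Since the hypotheses (sequence in $\mathcal{S}_{+}$ satisfying \eqref{P1} and \eqref{P2}, and an arbitrary continuous seminorm $p$) are identical to those of Proposition~\ref{Prop. V3}, I would first set $p_{1}=p$, extend to an increasing sequence of continuous seminorms $\{p_{k}\}_{k=1}^{\infty}$ giving the topology of $\mathcal{S}(0,\infty)$, and invoke the same construction to obtain $\{g_{j}\}$ strictly admissible with respect to $\{\widetilde{p}_{k}\}$, with $m:=m_{1}$ the same integer that appears in Proposition~\ref{Prop. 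V3}.

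The essential point is that Proposition~\ref{Prpp. V2} already delivers, for any strictly admissible sequence of blocks, more than mere density: it asserts that for every $\varepsilon>0$ and every $x_{0}\in E$ there is a solution $x$ of \eqref{V.3} with $p_{1}(x-x_{0})\leq\varepsilon$ \emph{and} $\langle f_{j},x\rangle=\langle f_{j},x_{0}\rangle$ for $1\leq j\leq N_{1}$. Thus I would apply Proposition~\ref{Prpp. V2} to the block-admissible sequence $\{g_{j}\}$ with the choice $x_{0}=0$. This yields a solution $\phi$ of $\langle g_{j},\phi\rangle=b_{j}$ for $j\geq N_{1}+1$ together with $\langle g_{j},\phi\rangle=0$ for $1\leq j\leq N_{1}$ and $\widetilde{p}_{1}(\phi)\leq\varepsilon$. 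Since $\widetilde{p}_{1}=p_{1}=p$, the estimate $p(\phi)<\varepsilon$ comes for free, which is exactly the additional quantitative conclusion that distinguishes Proposition~\ref{Prop. V4} from Proposition~\ref{Prop. V3}.

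It then remains to translate the statement from the $g_{j}$ back to the $f_{j}$. Here I would use condition~(\textbf{3}) of the construction: each block vector $(g_{j})_{j=m_{k-1}+1}^{m_{k}}$ is obtained from $(f_{j})_{j=m_{k-1}+1}^{m_{k}}$ by an invertible matrix, so prescribing the values $\langle g_{j},\phi\rangle$ on an initial segment of blocks is equivalent to prescribing the values $\langle f_{j},\phi\rangle$ on the same segment. Concretely, the condition $\langle g_{j},\phi\rangle=0$ for $1\leq j\leq N_{1}$, combined with the earlier agreement $g_{j}=f_{j}$ for small indices, lets me solve for the desired constraints $\langle f_{j},\phi\rangle=0$ for $0\leq j<m$; and matching $\langle g_{j},\phi\rangle$ to suitably chosen constants $b_{j}$ for $j\geq N_{1}+1$ realizes $\langle f_{j},\phi\rangle=a_{j}$ for $j\geq m$ for the originally prescribed sequence $\{a_{k}\}_{k=m}^{\infty}$. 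Since only finitely many linear combinations are involved in passing between the two moment problems, the passage is purely algebraic and introduces no loss.

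The step I expect to require the most care is the bookkeeping between the indices $N_{1}$, $m_{1}$, and the integer $m$ of the statement, and ensuring that the invertible block matrices of condition~(\textbf{3}) correctly convert the homogeneous conditions on the $g_{j}$ into the prescribed conditions on the $f_{j}$ without disturbing the $p$-smallness of $\phi$. Everything of substance, however, is already packaged in Proposition~\ref{Prpp. V2} and in the construction preceding Proposition~\ref{Prop. V3}; the proof is therefore essentially a matter of reading off the quantitative refinement $p(\phi)<\varepsilon$ from $\widetilde{p}_{1}=p$ and performing the finite-dimensional change of functionals, exactly as in the proof of Proposition~\ref{Prop. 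V3}.
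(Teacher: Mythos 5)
Your argument is essentially the paper's: the authors give no separate proof of Proposition \ref{Prop. V4}, stating only that it follows by the same arguments as Proposition \ref{Prop. V3}, i.e.\ by building the strictly admissible sequence $\{g_{j}\}$ via conditions (\textbf{1})--(\textbf{3}) and invoking the refined conclusion of Proposition \ref{Prpp. V2} with $x_{0}=0$, exactly as you do. The one adjustment is your choice $m=m_{1}$: since $g_{j}$ for $n_{1}<j\le N_{1}$ may involve $f_{m_{1}}$ (through the coefficients $\beta_{j,i}$ of the construction), prescribing $\langle f_{m_{1}},\phi\rangle=a_{m_{1}}\neq 0$ can force a nonzero value on some $\langle g_{j},\phi\rangle$ with $j\le N_{1}$, which the choice $x_{0}=0$ forbids; taking $m=m_{1}+1$, so that the entire first block is annihilated and the later blocks transform among themselves within the free range $j>N_{1}$, removes the difficulty without changing anything else.
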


\section{Vector moment problems\label{Section: Vector moment problems}}

We now consider some vector moment problems.

Let $\mathcal{X}$ be a Fr\'{e}chet space. Let $\Vert\ \Vert_{1}\leq
\Vert\ \Vert_{2}\leq\Vert\ \Vert_{3}\ldots$ be a sequence of seminorms of
$\mathcal{X}$ that generate its topology. Let $\{\mathbf{a}_{k}\}_{k=0}
^{\infty}$ be an arbitrary sequence of elements of $\mathcal{X}$ and let
$\{f_{j}\}_{j=0}^{\infty}$ be a sequence of distributions of the space
$\mathcal{S}_{+}.$ We wish to study the problem of finding a rapidly
decreasing smooth function $\mathbf{\phi}:\mathbb{R}\rightarrow\mathcal{X}$
with support in $[0,\infty)$ such that
\begin{equation}
\left\langle f_{j},\mathbf{\phi}\right\rangle =\mathbf{a}_{j}
\,,\;\;j=0,1,2,3,\ldots\,. \label{VM.2.1}
\end{equation}
Notice that asking $\mathbf{\phi}$ to be a rapidly decreasing smooth function
means that $\mathbf{\phi}\in\mathcal{S}(\mathbb{R},\mathcal{X})\cong
\mathcal{S}(\mathbb{R})\,\widehat{\otimes}\,\mathcal{X}.$ In general
\cite{treves}, $\mathbf{\psi}$ belongs to $\mathcal{S}(\mathbb{R}
^{n},\mathcal{X})$ if and only if for each $\mathbf{k,m}\in\mathbb{N}^{n}$ the
set $\{\mathbf{x}^{\mathbf{k}}\mathbf{D}^{\mathbf{m}}\mathbf{\psi}
(\mathbf{x}):\mathbf{x}\in\mathbb{R}^{n}\}$ is bounded in $\mathcal{X}.$ For a
Fr\'{e}chet space this means that
\begin{equation}
\Vert\mathbf{D}^{\mathbf{m}}\mathbf{\psi(x)}\Vert_{q}=O(|\mathbf{x}
|^{-k})\,,\;\mathrm{as\;}|\mathbf{x}|\rightarrow\infty\,, \label{VM.2.2}
\end{equation}
for each $\mathbf{m}\in\mathbb{N}^{n}$ and $q,k\in\mathbb{N}.$ Notice also
that $\mathcal{S}((0,\infty),\mathcal{X})\cong\mathcal{S}(0,\infty
)\,\widehat{\otimes}\,\mathcal{X}$ are the elements of $\mathcal{S}
(\mathbb{R},\mathcal{X})$ with support in $[0,\infty).$

If $\{f_{j}\}_{j=0}^{\infty}$ is a sequence of distributions of the space
$\mathcal{S}_{+}$ that satisfies conditions \eqref{P1} and \eqref{P2}, then we can find
functions $\rho_{k}\in\mathcal{S}(0,\infty)$ such that
\begin{equation}
\left\langle f_{j},\rho_{k}\right\rangle =\delta_{j,k}\,,\;\;j=0,1,2,3,\ldots
\ , \label{VM.2.4}
\end{equation}
where $\delta_{j,k}=0,$ $j\neq k,$ $\delta_{k,k}=1,$ is the Kronecker delta.
One is tempted to try to solve (\ref{VM.2.1}) by setting $\mathbf{\phi
}(x)=\sum_{k=0}^{\infty}\rho_{k}(x)\mathbf{a}_{k}.$ However, this series could
be divergent and even if convergent the sum might not belong to $\mathcal{S}
((0,\infty),\mathcal{X}).$ However, as we shall show, if we choose the
$\rho_{k}$'s carefully then the series converges and gives an element of
$\mathcal{S}((0,\infty),\mathcal{X}).$

\begin{theorem}
\label{Theorem Vec 1}Let $\mathcal{X}$ be a Fr\'{e}chet space and let
$\{\mathbf{a}_{k}\}$ be an arbitrary sequence of elements of $\mathcal{X}.$
Let $\{f_{k}\}_{k=0}^{\infty}$ be a sequence of distributions of the space
$\mathcal{S}_{+}$ that satisfies conditions \eqref{P1} and \eqref{P2}. Then the moment
problem
\begin{equation}
\left\langle f_{j},\mathbf{\phi}\right\rangle =\mathbf{a}_{j}\,,\;\;j\in
\mathbb{N}\ . \label{VM.5}
\end{equation}
has solutions $\mathbf{\phi}\in\mathcal{S}((0,\infty),\mathcal{X}).$
\end{theorem}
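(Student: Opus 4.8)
The plan is to construct $\mathbf{\phi}$ as the series $\mathbf{\phi}(x)=\sum_{k=0}^{\infty}\rho_{k}(x)\mathbf{a}_{k}$ already anticipated in the discussion preceding the theorem, where each $\rho_{k}\in\mathcal{S}(0,\infty)$ satisfies the biorthogonality relations \eqref{VM.2.4}, i.e. $\langle f_{j},\rho_{k}\rangle=\delta_{j,k}$ for all $j\in\mathbb{N}$. That \emph{some} solution $\rho_{k}$ of this scalar interpolation problem exists is guaranteed by Theorem \ref{momentsth2}, since \eqref{P1} and \eqref{P2} hold; the entire difficulty is to choose these $\rho_{k}$ with enough quantitative control on their size to force the series to converge in $\mathcal{S}((0,\infty),\mathcal{X})$. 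To set up the summability target, let $\{\|\cdot\|_{n}\}_{n\in\mathbb{N}}$ be the increasing norms of $\mathcal{S}(0,\infty)$ from Section \ref{Section: Preliminaries} and let $\|\cdot\|_{1}\le\|\cdot\|_{2}\le\cdots$ be the seminorms of $\mathcal{X}$. On an elementary tensor one computes directly from \eqref{VM.2.2} that the $(n,t)$-seminorm of $\rho_{k}\mathbf{a}_{k}$ equals $\|\rho_{k}\|_{n}\,\|\mathbf{a}_{k}\|_{t}$. Consequently, since $\mathcal{S}((0,\infty),\mathcal{X})$ is a complete (Fr\'echet) space, it suffices to arrange
\[
\sum_{k=0}^{\infty}\|\rho_{k}\|_{n}\,\|\mathbf{a}_{k}\|_{t}<\infty\qquad\text{for every pair }(n,t),
\]
for then the partial sums are Cauchy in each seminorm and their limit $\mathbf{\phi}$ lies in $\mathcal{S}((0,\infty),\mathcal{X})$.

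The core of the argument is a diagonalization that exploits the quantitative density statement of Proposition \ref{Prop. V4}. First I would, for each $n$, apply Proposition \ref{Prop. V4} with $p=\|\cdot\|_{n}$ to obtain an integer $m_{n}$, which I take to be non-decreasing in $n$. For an index $k$ set $n(k)=\max\{n:\,m_{n}\le k\}$ (with the convention that for the finitely many $k<m_{1}$ one simply picks any solution $\rho_{k}$ from Theorem \ref{momentsth2}). For $k\ge m_{1}$, apply Proposition \ref{Prop. V4} with $p=\|\cdot\|_{n(k)}$ and data $a_{j}=\delta_{j,k}$ for $j\ge m_{n(k)}$: this is legitimate because $k\ge m_{n(k)}$, and it produces $\rho_{k}$ with $\|\rho_{k}\|_{n(k)}<\varepsilon_{k}$ satisfying $\langle f_{j},\rho_{k}\rangle=\delta_{j,k}$ for $j\ge m_{n(k)}$ and $\langle f_{j},\rho_{k}\rangle=0$ for $0\le j<m_{n(k)}$. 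Since $k\ge m_{n(k)}$, the latter conditions are again exactly $\delta_{j,k}=0$, so \eqref{VM.2.4} holds in full. Here I would choose $\varepsilon_{k}=2^{-k}/(1+\|\mathbf{a}_{k}\|_{k})$.

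The decisive simplification, and what makes a single $\rho_{k}$ work against every seminorm at once, is the monotonicity of the norms: because $\|\cdot\|_{n}\le\|\cdot\|_{n(k)}$ whenever $n\le n(k)$, the one bound $\|\rho_{k}\|_{n(k)}<\varepsilon_{k}$ automatically controls $\|\rho_{k}\|_{n}$ for all $n\le n(k)$, that is, for all $n$ with $k\ge m_{n}$. Thus, fixing $(n,t)$, for every $k\ge\max\{m_{n},t\}$ one has $\|\rho_{k}\|_{n}\le\|\rho_{k}\|_{n(k)}<\varepsilon_{k}$ and $\|\mathbf{a}_{k}\|_{t}\le\|\mathbf{a}_{k}\|_{k}$, whence $\|\rho_{k}\|_{n}\,\|\mathbf{a}_{k}\|_{t}\le 2^{-k}$. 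The tail of the series is therefore dominated by $\sum_{k}2^{-k}$, while the remaining finitely many terms are plainly finite, so the summability target is met and $\mathbf{\phi}\in\mathcal{S}((0,\infty),\mathcal{X})$ is well defined. Finally, each $f_{j}$ acts continuously as $f_{j}\,\widehat{\otimes}\,\mathrm{id}_{\mathcal{X}}:\mathcal{S}((0,\infty),\mathcal{X})\to\mathcal{X}$, so termwise evaluation is justified and gives $\langle f_{j},\mathbf{\phi}\rangle=\sum_{k}\delta_{j,k}\mathbf{a}_{k}=\mathbf{a}_{j}$, as desired.

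I expect the main obstacle to be precisely the convergence: the naive series $\sum_{k}\rho_{k}\mathbf{a}_{k}$ may diverge for a fixed choice of solutions $\rho_{k}$, and since $\{\mathbf{a}_{k}\}$ is arbitrary the quantities $\|\mathbf{a}_{k}\|_{t}$ can grow without bound. The resolution is to beat this growth by making each $\rho_{k}$ as small as we wish in the single high seminorm $\|\cdot\|_{n(k)}$ (this is exactly the strength of Proposition \ref{Prop. V4}, which in turn rests on the density results of Section \ref{Section: Density of the set of solutions of moment problems}), and then to transfer that one bound to all lower seminorms for free by monotonicity. Care is only needed in checking that the smallness afforded by Proposition \ref{Prop. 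V4} does not destroy the low-index interpolation constraints, which is automatic because those prescribed values are zero precisely in the range where they are forced to vanish.
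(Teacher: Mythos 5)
Your proof is correct and follows essentially the same route as the paper's: the same series $\sum_{k}\rho_{k}\mathbf{a}_{k}$, the same appeal to Proposition \ref{Prop. V4} to shrink $\rho_{k}$ in the seminorm $\|\cdot\|_{n(k)}$ indexed by the largest admissible level, the same transfer to lower seminorms by monotonicity, and the same normalization of $\varepsilon_{k}$ against $\|\mathbf{a}_{k}\|_{k}$ (the paper's $Q_{k}$). The only cosmetic point is that you should take the $m_{n}$ strictly increasing (always possible, since the conclusion of Proposition \ref{Prop. V4} for $m$ implies it for any $m'>m$) so that $n(k)=\max\{n:\,m_{n}\le k\}$ is finite for every $k$.
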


\begin{proof}
Let
\begin{equation}
Q_{n}=\left\Vert \mathbf{a}_{n}\right\Vert _{n}\,, \label{VM.6}
\end{equation}
so that $\left\Vert \mathbf{a}_{j}\right\Vert _{n}\leq Q_{j}$ if $j\geq n.$
Let us choose $\varepsilon_{n}>0$ so that $\sum_{n=1}^{\infty}\varepsilon
_{n}Q_{n}<\infty.$ Let $\left\{  p_{k}\right\}  _{k=1}^{\infty}$ be an
increasing sequence of continuous seminorms of $\mathcal{S}\left(
0,\infty\right)  $ that gives the topology.

Employing Proposition \ref{Prop. V4} we can find a sequence of positive
integers $\left\{  m_{i}\right\}  _{i=1}^{\infty},$ which we may suppose
increasing, and for $k\geq m_{i}$\ solutions $\rho_{k}^{\{i\}}$ of the moment
problem (\ref{VM.2.4}) with $p_{i}\left(  \rho_{k}^{\{i\}}\right)
\leq\varepsilon_{k}.$ If we now put $\rho_{k}=\rho_{k}^{\{i\}}$ for $m_{i}\leq
k\leq m_{i+1}-1,$ and take any solutions $\rho_{k}$ for $k<m_{1},$ then the
series $\sum_{k=0}^{\infty}\rho_{k}(x)\mathbf{a}_{k}$ converges in
$\mathcal{S}(0,\infty)\,\widehat{\otimes}\,\mathcal{X}\cong\mathcal{S}
((0,\infty),\mathcal{X}).$ Indeed, convergence of the series in the tensor
product would follow \cite{treves}\ if we show that for each $N$ and $M$ the
series $\sum_{k=0}^{\infty}p_{N}\left(  \rho_{k}\right)  \left\Vert
\mathbf{a}_{k}\right\Vert _{M}$ converges. But by taking $K=\max\left\{
M,m_{N}\right\}  $ we obtain
\begin{align*}
\sum_{k=0}^{\infty}p_{N}\left(  \rho_{k}\right)  \left\Vert \mathbf{a}
_{k}\right\Vert _{M}  &  \leq\sum_{k=0}^{K-1}p_{N}\left(  \rho_{k}\right)
\left\Vert \mathbf{a}_{k}\right\Vert _{M}+\sum_{k=K}^{\infty}p_{N}\left(
\rho_{k}\right)  \left\Vert \mathbf{a}_{k}\right\Vert _{M}\\
&  \leq\sum_{k=0}^{K-1}p_{N}\left(  \rho_{k}\right)  \left\Vert \mathbf{a}
_{k}\right\Vert _{M}+\sum_{k=K}^{\infty}\varepsilon_{k}Q_{k}<\infty\,.
\end{align*}
If we now put $\mathbf{\phi}(x)=\sum_{k=0}^{\infty}\rho_{k}(x)\mathbf{a}_{k}$
we obtain
\[
\left\langle f_{j},\mathbf{\phi}\right\rangle =\sum_{k=0}^{\infty}\left\langle
f_{j},\rho_{k}(x)\right\rangle \mathbf{a}_{k}=\sum_{k=0}^{\infty}\delta
_{j,k}\mathbf{a}_{k}=\mathbf{a}_{j}\,,
\]
because of the convergence and the continuity of the $f_{j}$'s.
\end{proof}
Needless to say, the conditions \eqref{P1} and \eqref{P2} are also necessary for the solvability of the vector moment problem, as follows from Theorem \ref{momentsth2}.
\section{Moment problems in several
variables\label{Section: Moment problems in several variables}}

In this section we consider moment problems for functions of several
variables. Indeed, let $V$ be an open cone with vertex at the origin in
$\mathbb{R}^{d},$ that is, $V$ is an open set such that $\lambda\mathbf{x}\in
V$ whenever $\lambda>0$ and $\mathbf{x}\in V.$ Let $\mathcal{X}$ be a Fr\'{e}chet space. We denote by $\mathcal{S}(V,\mathcal{X})$
the space of elements of $\mathcal{S}(\mathbb{R}^{d},\mathcal{X})$ with support contained
in $\overline{V}.$ If $\{F_{n}\}_{n=0}^{\infty}$ is a sequence of
distributions of the space $\mathcal{S}^{\prime}(\mathbb{R}^{d})$ and
$\{\mathbf{a}_{n}\}_{n=0}^{\infty}$ is a sequence of vectors of $\mathcal{X}$, we would like to
study the existence of solutions of the vector moment problem
\begin{equation}
\left\langle F_{n}\left(  \mathbf{x}\right)  ,\phi\left(  \mathbf{x}\right)
\right\rangle =\mathbf{a}_{n}\,,\ \ \ n\in\mathbb{N}\,, \label{SV.1}
\end{equation}
in the space $\mathcal{S}(V,\mathcal{X}).$

We shall denote by $\mathbb{S}$ the unit sphere of $\mathbb{R}^{d},$ and if
$U$ is an open subset of $\mathbb{S}$ then $\mathcal{D}(U)$ will
denote the set of smooth functions defined in $\mathbb{S}$ whose support is
contained in $U.$

If $F\in\mathcal{S}^{\prime}(\mathbb{R}^{d})$ and $\psi$ is a smooth function
defined in $\mathbb{S}$ then we can define the distribution $f\left(
r\right)  =\left\langle F\left(  r\mathbf{\omega}\right)  ,\psi\left(
\mathbf{\omega}\right)  \right\rangle _{\mathbf{\omega}}$ of the space
$\mathcal{S}^{\prime}\left(  0,\infty\right)  $ by
\begin{equation*}
\left\langle f\left(  r\right)  ,\rho\left(  r\right)  \right\rangle
_{r}=\left\langle F_{n}\left(  \mathbf{x}\right)  ,\left\vert \mathbf{x}
\right\vert ^{1-d}\rho\left(  \left\vert \mathbf{x}\right\vert \right)
\psi\left(  \mathbf{x/}\left\vert \mathbf{x}\right\vert \right)  \right\rangle
\,,\ \ \ \ \ \rho\in\mathcal{S}\left(  0,\infty\right)  \,. 
\end{equation*}
In general this equation cannot be applied if $\rho\in\mathcal{S}_{+},$ so
that $f$ does not belong to $\mathcal{S}_{+}^{\prime},$ but there are always
extensions\footnote{Interestingly \cite{estrada03}, there are no
\emph{continuous} extension operators from $\mathcal{S}^{\prime}\left(
0,\infty\right)  $ to $\mathcal{S}_{+}^{\prime}.$} of $f$ to $\mathcal{S}
_{+}^{\prime}.$

\begin{proposition}
\label{Prop. SV 1}Let $\{F_{n}\}_{n=0}^{\infty}$ be a sequence of
distributions of the space $\mathcal{S}^{\prime}(\mathbb{R}^{d}).$ Suppose
there exists a smooth function $\psi\in\mathcal{D}(V\cap\mathbb{S})$ such that
the generalized functions of one variable $f_{n}\left(  r\right)
=\left\langle F_{n}\left(  r\mathbf{\omega}\right)  ,\psi\left(
\mathbf{\omega}\right)  \right\rangle _{\mathbf{\omega}}$ have
extensions\footnote{These distributions belong to the spaces $\mathcal{R}_{d}$
introduced in \cite{Grafakos-Teschl}.} in $\mathcal{S}_{+}^{\prime}$\ that
satisfy \eqref{P1} and \eqref{P2}. Then for any arbitrary sequence of vectors
$\{\mathbf{a}_{n}\}_{n=0}^{\infty}$ in a Fr\'{e}chet space $\mathcal{X}$ the moment problem (\ref{SV.1}) has solutions in
$\mathcal{S}(V,\mathcal{X}).$
\end{proposition}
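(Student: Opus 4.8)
The plan is to reduce the $d$-dimensional problem (\ref{SV.1}) to the one-variable vector moment problem for the sliced sequence $\{f_{n}\}_{n=0}^{\infty}$ and then to invoke Theorem \ref{Theorem Vec 1}. The bridge between the two problems is the linear map
\[
T_{\psi}:\mathcal{S}(0,\infty)\to\mathcal{S}(V),\qquad (T_{\psi}\rho)(\mathbf{x})=|\mathbf{x}|^{1-d}\rho(|\mathbf{x}|)\psi(\mathbf{x}/|\mathbf{x}|),
\]
where $\mathcal{S}(V)$ denotes the Schwartz functions on $\mathbb{R}^{d}$ with support in $\overline{V}$. The transpose of $T_{\psi}$ realizes precisely the slicing operation $F\mapsto\langle F(r\mathbf{\omega}),\psi(\mathbf{\omega})\rangle_{\mathbf{\omega}}$ from the definition of $f_{n}$; that is, $\langle F_{n},T_{\psi}\rho\rangle=\langle f_{n},\rho\rangle$ for every $\rho\in\mathcal{S}(0,\infty)$.

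First I would apply Theorem \ref{Theorem Vec 1}. Since by hypothesis the extensions of the $f_{n}$ to $\mathcal{S}_{+}^{\prime}$ satisfy \eqref{P1} and \eqref{P2}, that theorem produces a solution $\Phi\in\mathcal{S}((0,\infty),\mathcal{X})$ of the one-variable vector moment problem $\langle f_{n},\Phi\rangle=\mathbf{a}_{n}$, $n\in\mathbb{N}$. (The evaluations $\langle f_{n},\Phi\rangle$ are unambiguous for $\Phi\in\mathcal{S}(0,\infty)$, being insensitive to the choice of extension because $\mathcal{N}_{0}$ annihilates $\mathcal{S}(0,\infty)$.) I would then set
\[
\mathbf{\phi}(\mathbf{x})=(T_{\psi}\,\widehat{\otimes}\,\mathrm{id}_{\mathcal{X}})\Phi(\mathbf{x})=|\mathbf{x}|^{1-d}\Phi(|\mathbf{x}|)\psi(\mathbf{x}/|\mathbf{x}|),
\]
and verify the two required facts: that $\mathbf{\phi}\in\mathcal{S}(V,\mathcal{X})$, and that it solves (\ref{SV.1}). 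The latter is immediate from the transpose relation extended by continuity to vector values, $\langle F_{n},\mathbf{\phi}\rangle=\langle f_{n},\Phi\rangle=\mathbf{a}_{n}$.

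The main obstacle will be establishing that $T_{\psi}$ is a well-defined \emph{continuous} map from $\mathcal{S}(0,\infty)$ into $\mathcal{S}(V)$; its continuity is what then licenses tensoring with $\mathcal{X}$ to land in $\mathcal{S}(V,\mathcal{X})\cong\mathcal{S}(V)\,\widehat{\otimes}\,\mathcal{X}$ and applying it to $\Phi$. Three points must be checked. The support of $T_{\psi}\rho$ lies in the closed cone generated by $\operatorname{supp}\psi\subseteq V\cap\mathbb{S}$, hence in $\overline{V}$; this is clear. Rapid decrease as $|\mathbf{x}|\to\infty$ follows because $\Phi$ and all its derivatives are rapidly decreasing while $|\mathbf{x}|^{1-d}$ and $\psi(\mathbf{x}/|\mathbf{x}|)$ contribute only powers of $|\mathbf{x}|$. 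The delicate step is smoothness across the origin, where both $|\mathbf{x}|^{1-d}$ and the angular factor are singular. Here I would argue by induction that every derivative $D^{\mathbf{m}}(T_{\psi}\rho)(\mathbf{x})$ is a finite sum of terms of the form $|\mathbf{x}|^{1-d-|\mathbf{m}|+\ell}\,\rho^{(\ell)}(|\mathbf{x}|)\,\Theta_{\mathbf{m},\ell}(\mathbf{x}/|\mathbf{x}|)$ with each $\Theta_{\mathbf{m},\ell}$ smooth and bounded on $\mathbb{S}$; since $\rho\in\mathcal{S}(0,\infty)$ forces $\rho^{(\ell)}(r)=O(r^{K})$ as $r\to0^{+}$ for every $K$, each such term vanishes to infinite order at the origin, so all derivatives extend continuously by zero and the Schwartz estimates hold uniformly. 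This is exactly why one needs $\rho$ to vanish to all orders at $0$ (i.e.\ $\rho\in\mathcal{S}(0,\infty)$ rather than $\mathcal{S}_{+}$), in accordance with the remark preceding the proposition, and it is the one computation I expect to require genuine care.
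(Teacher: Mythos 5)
Your proposal is correct and follows essentially the same route as the paper: solve the one-variable vector moment problem for $\{f_{n}\}$ via Theorem \ref{Theorem Vec 1} and transport the solution by $\mathbf{\phi}(\mathbf{x})=|\mathbf{x}|^{1-d}\varphi(|\mathbf{x}|)\psi(\mathbf{x}/|\mathbf{x}|)$. The paper states the membership $\mathbf{\phi}\in\mathcal{S}(V,\mathcal{X})$ and the transpose identity without elaboration, whereas you supply the (correct) verification of smoothness at the origin and continuity of the transfer map.
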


\begin{proof}
Indeed, the moment problem $\left\langle f_{n}\left(  r\right)  ,\varphi
\left(  r\right)  \right\rangle =\mathbf{a}_{n}$ has solutions $\varphi\in
\mathcal{S}(\left(  0,\infty\right),\mathcal{X})  .$ If we put
\begin{equation*}
\phi\left(  \mathbf{x}\right)  =\left\vert \mathbf{x}\right\vert ^{1-d}
\varphi\left(  \left\vert \mathbf{x}\right\vert \right)  \psi\left(
\mathbf{x/}\left\vert \mathbf{x}\right\vert \right)  \,, 
\end{equation*}
then $\phi\in\mathcal{S}(V,\mathcal{X})$ and it satisfies the moment problem
(\ref{SV.1}).
\end{proof}

Proposition \ref{Prop. SV 1} implies that if the $F_{n}$'s are radial
distributions, that is, they depend only on $\left\vert \mathbf{x}\right\vert
,$ $F_{n}\left(  \mathbf{x}\right)  =f_{n}\left(  \left\vert \mathbf{x}
\right\vert \right)  $ and the distributions\footnote{The distributions of one
variable $f_{n}$ are not unique \cite{Estrada14}.} $f_{n}$ satisfy \eqref{P1} and \eqref{P2}, then (\ref{SV.1}) has solutions in $\mathcal{S}(V,\mathcal{X})$ for \emph{any} cone
$V$: we may just take $\psi$ as any element of $\mathcal{S}(V\cap\mathbb{S})$
whose integral over $V\cap\mathbb{S}$ does not vanish. Actually we can improve
this result:

\begin{corollary}
\label{Cor SV 1} Let $\mathcal{X}$ be a Fr\'{e}chet space and let
$\{\mathbf{a}_{k}\}$ be an arbitrary sequence of elements of $\mathcal{X}.$
 Let $\{f_{n}\}_{n=0}^{\infty}$ be a sequence of distributions of $\mathcal{S}_{+}'$ for which \eqref{P1} and \eqref{P2} hold and let $\{g_{n}\}_{n=0}^{\infty}$ be a distribution sequence in $\mathcal{D}'(\mathbb{S})$. If $\{F_{n}\}_{n=0}^{\infty}$ is any sequence of distributions in $\mathcal{S}'(\mathbb{R}^{d})$ such that 
$$
\langle F_{n}(\mathbf{x}), \phi(\mathbf{x})  \rangle= \langle f_{n}(r)\otimes g_{n}(\omega) , r^{d-1}\phi(r \omega)\rangle \quad \mbox{for each } \phi\in \mathcal{S}(V),
$$
then (\ref{SV.1}) is solvable in $\mathcal{S}(V,\mathcal{X})$ for any open cone
$V$ provided that there is an open set $U$ of $\mathbb{S}$ such that $\overline{U}\subset V\cap \mathbb{S}$ and $g_{n}\neq 0$ on $U$. 
\end{corollary}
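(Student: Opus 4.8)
The plan is to reduce the $d$-dimensional problem \eqref{SV.1} to the one-variable vector moment problem already handled in Proposition \ref{Prop. SV 1}, by producing a single angular test function $\psi$ that ``detects'' every $g_{n}$ at once. First I would seek $\psi\in\mathcal{D}(V\cap\mathbb{S})$ with $\operatorname{supp}\psi\subseteq\overline{U}$ and $\langle g_{n},\psi\rangle\neq 0$ for all $n$. Once such a $\psi$ is available, taking $\phi(\mathbf{x})=|\mathbf{x}|^{1-d}\rho(|\mathbf{x}|)\psi(\mathbf{x}/|\mathbf{x}|)$ in the defining identity gives $r^{d-1}\phi(r\omega)=\rho(r)\psi(\omega)$, whence $\langle F_{n},\phi\rangle=\langle f_{n},\rho\rangle\langle g_{n},\psi\rangle$; that is, the one-variable distribution $\langle F_{n}(r\omega),\psi(\omega)\rangle_{\omega}$ equals $c_{n}f_{n}(r)$ on $(0,\infty)$ with $c_{n}=\langle g_{n},\psi\rangle\neq 0$. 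Since scaling by nonzero constants preserves both \eqref{P1} and \eqref{P2}, the extensions $c_{n}f_{n}\in\mathcal{S}_{+}^{\prime}$ satisfy these properties, and Proposition \ref{Prop. SV 1}, applied with this very $\psi$, yields a solution $\phi\in\mathcal{S}(V,\mathcal{X})$ of \eqref{SV.1}.

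The substance of the argument is therefore the existence of $\psi$, and this is exactly where the hypotheses $\overline{U}\subset V\cap\mathbb{S}$ and $g_{n}\neq 0$ on $U$ are used. I would work in the space $\mathcal{F}=\{\psi\in C^{\infty}(\mathbb{S}):\operatorname{supp}\psi\subseteq\overline{U}\}$. Because $\overline{U}$ is a \emph{fixed compact} subset of $\mathbb{S}$, the set $\mathcal{F}$ is a closed subspace of the Fr\'echet space $C^{\infty}(\mathbb{S})$, hence itself Fr\'echet, and in particular a Baire space. For each $n$, the assumption that $g_{n}$ is not identically zero on $U$ furnishes a test function with support in $U\subseteq\overline{U}$ on which $g_{n}$ does not vanish; thus $\psi\mapsto\langle g_{n},\psi\rangle$ is a nonzero continuous functional on $\mathcal{F}$, so its kernel $H_{n}$ is a proper closed subspace and therefore nowhere dense. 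By the Baire category theorem $\bigcup_{n}H_{n}\neq\mathcal{F}$, and any $\psi\in\mathcal{F}\setminus\bigcup_{n}H_{n}$ satisfies $\langle g_{n},\psi\rangle\neq 0$ for every $n$. Moreover $\operatorname{supp}\psi\subseteq\overline{U}\subset V\cap\mathbb{S}$, so $\psi\in\mathcal{D}(V\cap\mathbb{S})$, as needed for the factorization above.

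It then only remains to assemble the pieces: verify that $\phi(\mathbf{x})=|\mathbf{x}|^{1-d}\varphi(|\mathbf{x}|)\psi(\mathbf{x}/|\mathbf{x}|)$ belongs to $\mathcal{S}(V,\mathcal{X})$ exactly as in the proof of Proposition \ref{Prop. SV 1} (the factor $\varphi\in\mathcal{S}(0,\infty)$ absorbs the $|\mathbf{x}|^{1-d}$ singularity at the origin and the compact angular support keeps $\phi$ supported in $\overline{V}$), and check $\langle F_{n},\phi\rangle=\langle f_{n},\varphi\rangle\langle g_{n},\psi\rangle=\mathbf{a}_{n}$ for the solution $\varphi$ of the rescaled one-variable problem. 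These are the routine verifications carried out in Proposition \ref{Prop. SV 1}.

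The main obstacle is precisely the construction of one $\psi$ that works for all $n$ simultaneously. A naive attempt inside $\mathcal{D}(U)$ fails on two counts: $\mathcal{D}(U)$ is an (LF)-space and is not Baire, and one cannot in general form convergent combinations of test functions whose supports escape toward $\partial U$. The device that removes both difficulties is to confine all supports to the fixed compact set $\overline{U}$, turning the ambient space into a genuine Fr\'echet space on which Baire category applies; the hypothesis that each $g_{n}$ is already nonzero on the \emph{open} set $U$ (rather than merely somewhere on $\mathbb{S}$) is what guarantees that every kernel $H_{n}$ remains a proper subspace of this fixed-support space.
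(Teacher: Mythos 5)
Your proposal is correct and follows essentially the same route as the paper: reduce to Proposition \ref{Prop. SV 1} by finding one $\psi$ supported in $\overline{U}$ with $\langle g_{n},\psi\rangle\neq 0$ for all $n$, and produce it by a Baire category argument in the Fr\'echet space of smooth functions on $\mathbb{S}$ supported in the fixed compact set $\overline{U}$, whose subspaces $\ker g_{n}$ are proper and closed. Your additional remarks (that the resulting radial parts are $c_{n}f_{n}$ with $c_{n}\neq 0$ and that nonzero rescaling preserves \eqref{P1} and \eqref{P2}) only make explicit what the paper leaves implicit.
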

\begin{proof}
In fact, using Proposition \ref{Prop. SV 1}, it suffices to check that there is a smooth function $\psi$ with $\operatorname*{supp} \psi \subseteq \overline{U}$ such that $\langle g_{n}(\omega),\psi(\omega)\rangle\neq 0$ for all $n\in \mathbb{N}$. To show this, consider the Fr\'{e}chet space $\mathcal{Y}=\{\psi\in C^{\infty}(\mathbb{S}):\: \operatorname*{supp} \psi \subseteq \overline{U}\}$ and the sequence of closed subspaces $\mathcal{Y}_{n}=\operatorname{ker}_{\mathcal{Y}} g_{n}=\{\psi\in \mathcal{Y}:\: \langle g_n,\psi \rangle=0\}$. Since each $g_n$ is non-identically zero on $U$, we have that $\mathcal{Y}_{n}\neq \mathcal{Y}$ is of the first category. The Baire theorem implies that $\mathcal{Y}\neq \bigcup_{n=0}^{\infty}\mathcal{Y}_{n}$.
\end{proof}

\end{document}